\definecolor{Blu}{rgb}{0,0,1}
\newcommand{\R}{\mathbb{R}}
\newcommand{\Rset}{\mathbb{R}}
\newcommand{\N}{\mathbb{N}}
\newcommand{\Z}{\mathbb{Z}}
\newcommand{\C}{\mathbb{C}}
\newcommand{\e}{\mathrm{e}}
\newcommand{\defeq}{:=}
\newcommand{\dif}{\,\mathrm{d}}
\newcommand{\norm}[1]{\left\lVert #1 \right\rVert}              %
\newcommand{\bignorm}[1]{\bigl\lVert #1 \bigr\rVert}              %
\newcommand{\abs}[1]{\lvert #1 \rvert}                     %
\newcommand{\bigabs}[1]{\bigl\lvert #1 \bigr\rvert}			%
\newcommand{\weakto}{\rightharpoonup}
\newcommand{\dualprod}[2]{\langle #1, #2 \rangle}
\newcommand{\scalprod}[2]{( #1 \vert #2 )}
\newcommand{\bigscalprod}[2]{\bigl( #1 \big\vert #2 \bigr)}
\newcommand{\floor}[1]{\lfloor #1 \rfloor}
\newcommand{\Lin}{\mathrm{Lin}}
\newcommand{\st}{\;:\;}
\DeclareMathOperator{\sign}{sign}
\DeclareMathOperator{\Div}{div}
\DeclareMathOperator{\trace}{tr}
\newtheorem{proposition}{Proposition}[section]
\newtheorem{theorem}{Theorem}
\newtheorem{lemma}[proposition]{Lemma}
\newtheorem{claim}{Claim}
\theoremstyle{definition}
\theoremstyle{remark}
\newtheorem{remark}{Remark}
\newenvironment{proofclaim}[1][Proof of the claim]{\begin{proof}[#1]}{\end{proof}}
\newcommand{\resetclaim}{\setcounter{claim}{0}}
\begin{document} 

\title[Groundstates of magnetic nonlinear Schr\"odinger equations]{Properties of groundstates of nonlinear Schr\"odinger equations under a weak constant magnetic field}

\author{Denis Bonheure}
\address{Denis Bonheure 
\newline \indent Département de Mathématiques, Université Libre de Bruxelles,
\newline \indent CP 214, Boulevard du triomphe, B-1050 Bruxelles, Belgium,
\newline \indent and INRIA- Mephisto team.}
\email{Denis.Bonheure@ulb.ac.be}

\author{Manon Nys}
\address{Manon Nys 
\newline \indent Dipartimento di Matematica Giuseppe Peano, Università degli Studi di Torino (UNITO), 
\newline \indent Via Carlo Alberto, 10 10123 Torino, Italy.}
\email{manonys@gmail.com}

\author{Jean Van Schaftingen}
\address{Jean Van Schaftingen 
\newline \indent Institut de Recherche en Mathématique et Physique, Université Catholique de Louvain,
\newline \indent Chemin du Cyclotron 2 bte L7.01.01, 1348 Louvain-la-Neuve, Belgium.}
\email{Jean.VanSchaftingen@uclouvain.be}

\thanks{D. Bonheure and J. Van Schaftingen were partially supported by the Projet de Recherche (Fonds de la Recherche Scientifique--FNRS) n. T.1110.14 ``Existence and asymptotic behavior of solutions to systems of semilinear elliptic partial differential equations''. D. Bonheure is partially supported by  INRIA~-- Team MEPHYSTO,
MIS F.4508.14 (FNRS)  \& ARC AUWB-2012-12/17-ULB1-IAPAS.
M. Nys is partially supported by the project ERC Advanced Grant 2013 n. 339958: ``Complex
Patterns for Strongly Interacting Dynamical Systems -- COMPAT''. M. Nys wishes to thank the
Belgian Fonds de la Recherche Scientifique -- FNRS}

\date{\today}

\subjclass[2010]{%
35J61 %
(35B07, %
35B40, %
35J10, %
35Q55%
)}

\keywords{Semilinear elliptic problem; magnetic Schr\"odinger operators; convergence of spectrum; asymptotic decay of solutions.}

\begin{abstract}
We study the qualitative properties of groundstates of 
the time-inde\-pen\-dent magnetic semilinear Schr\"odinger equation 
\[
 -(\nabla  + i A)^2 u + u = \vert u \vert^{p - 2} u \qquad \text{in $\mathbb{R}^N$}
\]
where the magnetic potential $A$ induces a constant magnetic field.
When the latter magnetic field is small enough, we show that the groundstate solution is unique
up to magnetic translations and rotations in the complex phase space,
that groundstate solutions share the rotational invariance of the magnetic field and that the
presence of a magnetic field induces a Gaussian decay.
In this small magnetic field r\'egime, the correponding ground-energy is a convex differentiable function of the magnetic field.
\end{abstract}

\maketitle

\tableofcontents

\section{Introduction}

We are interested in the \emph{time-independent magnetic semilinear Schr\"odinger equation}
\begin{equation}%
  \label{eqNLSEMag}
  -\Delta_A u + u = \abs{u}^{p - 2} u \qquad \text{in \(\R^N\)},
\end{equation}
in dimension \(N \ge 2\), 
with a \emph{linear magnetic potential} \(A \in \Lin(\R^N, \bigwedge^1 \R^N)\)
which allows to define the \emph{magnetic Laplacian}
\[
 - \Delta_A  := -\Delta -2i \, A\cdot\nabla  - i\Div A\,  +\abs{A}^{2},
\]
and a subcritical power \(p\) in the nonlinearity, i.e. \(2 < p < \frac{2N}{N-2}\).

Infinitely many solutions have been constructed for the magnetic nonlinear Schr\"o\-dinger equation \eqref{eqNLSEMag}, see e.g.\ \cite{EstebanLions1999}.
In this work we are interested in the qualitative properties of the \emph{groundstates} (or \emph{least-energy solutions}) of the problem \eqref{eqNLSEMag}, which can be obtained and characterized as minimizers of the variational problem
\[
   \inf \,\bigl\{\mathcal{I}_{A}(u) \st u \in H^1_A (\R^N, \mathbb{C}) \setminus \{0\} \, \text{ and } \,  \mathcal{I}'_A(u) = 0 \bigr\}.
\]
Here the \emph{magnetic Sobolev space} $H^1_A(\mathbb{R}^N, \C)$ is a \emph{real} Hilbert space given by
\[
  H^1_{A}(\R^N, \mathbb{C}) \defeq \bigl\{u \in L^2(\R^N, \mathbb{C}) \st  D_A u \in L^2(\R^N) \bigr\},
\]
and the functional $\mathcal{I}_A : H^1_A (\Rset^N,\C)  \to \R $ is defined for each function \(u \in H^1_A (\Rset^N,\C)\) by
\[
 \mathcal{I}_{A}(u) \defeq \frac{1}{2} \int_{\R^N} \bigl( \abs{D_A u}^2 + \abs{u}^2 \bigr) - \frac{1}{p} \int_{\R^N} \abs{u}^p,
\]
where the \emph{magnetic covariant derivative} is defined by
\[
  D_A u = D u + i Au.
\]
Critical points of the functional \(\mathcal{I}_A\) correspond to weak solutions of the equation \eqref{eqNLSEMag}. For more details about those objects, we refer to \S \ref{section:preliminaries}.

The aim of the present work is to understand the symmetry properties of the groundstates of the magnetic nonlinear Schr\"odinger equation \eqref{eqNLSEMag}, their asymptotic decay at infinity and their dependence on the magnetic field \(B = dA \in \bigwedge^2 \mathbb{R}^N \).
In order to alleviate the statement of the results, 
we recall how the problem can be simplified by gauge fixing, i.e., by choosing a specific gauge
and how the problem is invariant under magnetic translations.

The \emph{gauge invariance of the magnetic Hamiltonian}
means that if for some function \(\psi \in C^1 (\Rset^N)\), we set 
\begin{equation}
\label{eq:gauge}
\Tilde{A} = A + d \psi\ \text{ and }\ \Tilde{u}  = \e^{-i \psi} u,
\end{equation}
then 
\[
 D_{\Tilde{A}} \Tilde{u} = \e^{-i \psi} D_A u.
\]
In particular, if \(u \in H^1_A (\Rset^N,\C)\), then \(\mathcal{I}_{\Tilde{A}} (\Tilde{u}) = \mathcal{I}_A (u)\) and therefore solutions of the equation \eqref{eqNLSEMag} with \(A\) and \(\Tilde{A}\) can be related to each other using the relation \eqref{eq:gauge}. Since \(d \Tilde{A} = d A\) and \(\abs{\Tilde{u}} = \abs{u}\), the gauge invariance means that 
only the \emph{magnetic field} \(dA\) plays a role in the physical behavior of the solutions of \eqref{eqNLSEMag}.
When, as in the present work, the magnetic field $dA$ is constant, one of the simplest 
gauge choice is to assume that \(A\) is linear and skew-symmetric. 
Equivalently, this means that \(A\) is represented by an antisymmetric matrix. 
If \(A\) is linear, such a choice can be made by setting \(\psi (x) = -A (x)[x]/2\) in \eqref{eq:gauge}.
This choice is equivalent to the choice of the Coulomb gauge with a transversal boundary condition at infinity (or, equivalently, divergence free with a Neumann boundary condition at infinity), i.e., 
\begin{equation}
\label{eqCondGaugeFixing}
  \left\{
  \begin{aligned}
    \Div A & = 0 & &\text{in \(\Rset^N\)},\\
    \frac{A (x)[x]}{\abs{x}^2} &\to 0 & & \text{as \(\abs{x} \to \infty\)}.
  \end{aligned}
  \right.
\end{equation}
In particular, if \(B \in \bigwedge^2 \Rset^N\) is a constant skew-symmetric form, there exists a unique \(A \in \Lin(\Rset^N, \bigwedge^1 \Rset^N)\) satisfying \(dA = B\) and \eqref{eqCondGaugeFixing}.  
This potential \(A\) is defined for each \(x \in \Rset^N\) by
\begin{equation}\label{eq:Bgauge}
  A (x)[v] = \frac{1}{2} B[x, v].
\end{equation}
As \(\mathcal{I}_{\Tilde{A}} (\Tilde{u}) = \mathcal{I}_A (u)\) when \eqref{eq:gauge} holds, the precise choice \eqref{eq:Bgauge} allows to define the \emph{ground-energy function} \(\mathcal{E} : \bigwedge^2 \Rset^N \to \R\) by
\begin{equation}\label{eq:E(B)}
   \mathcal{E} (B) = \mathcal{E} (d A) \defeq \inf \, \bigl\{\mathcal{I}_{A}(v) \st v \in H^1_A (\R^N, \mathbb{C}) \setminus \{0\} \, \text{ and } \,  \mathcal{I}'_A(v) = 0 \bigr\}.
\end{equation}

Because of the presence of the magnetic potential, the magnetic nonlinear Schr\"odinger equation \eqref{eqNLSEMag} is not invariant under translations in \(\mathbb{R}^N\). 
However, it is still invariant under \emph{magnetic translations with respect to the connection} $D_A$. 
For \(a \in \Rset^N\) and \(u \in H^1_A (\Rset^N,\C)\), that magnetic translation is defined by 
\begin{equation*} 
 \tau^A_a u (x) = \e^{-i A (a) [x]} u (x - a).
\end{equation*}
This definition depends on the gauge fixing made above. This magnetic translation commutes with the covariant derivative \(D_A\), i.e., \(D_A \circ \tau^A_a = \tau^A_a \circ D_A\).
Together with multiplications by complex numbers in the unit circle, the magnetic translations form a Lie group. This will be explained in more details in \S \ref{sbsec:magnetic-translations}.

Our starting point to study the properties of the groundstates is to establish that, when the magnetic field \(dA\) is sufficiently small, then the groundstate of  \eqref{eqNLSEMag} is unique up to the action of the Lie group that we have described. 

\begin{theorem}[Uniqueness up to magnetic translations and multiplications by a complex phase of groundstates]%
\label{theoremUniqueness}
For every \(N \ge 2\) and \(p \in (2, \frac{2N}{N - 2})\), there exists $\varepsilon > 0$ such that if $A \in \Lin(\R^N,\bigwedge^1 \R^N)$ satisfies $\abs{d A} \le \varepsilon$, if \(u\) and \(v\) are solutions of \eqref{eqNLSEMag} satisfying \(\mathcal{I}_A (u) \le \mathcal{E} (0) + \varepsilon\) and if \(\mathcal{I}_A (v) \le \mathcal{E} (0) + \varepsilon\), then \(u = \e^{i\theta} \tau^A_a v\) for some \(a \in \R^N\) and \(\theta \in \R\).
\end{theorem}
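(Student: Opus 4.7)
The plan is to argue by contradiction, reducing to the non-magnetic problem via a concentration-compactness argument and then applying a Lyapunov--Schmidt scheme based on the non-degeneracy of the Kwong groundstate. Assume the statement fails. Then there exist $\varepsilon_n \to 0$, linear skew-symmetric potentials $A_n$ in the Coulomb gauge \eqref{eq:Bgauge} with $\abs{d A_n} \le \varepsilon_n$, and solutions $u_n, v_n$ of \eqref{eqNLSEMag} satisfying $\mathcal{I}_{A_n}(u_n), \mathcal{I}_{A_n}(v_n) \le \mathcal{E}(0) + \varepsilon_n$, but for which there is no $(a, \theta) \in \R^N \times \R$ with $u_n = \e^{i\theta} \tau^{A_n}_a v_n$. (By gauge invariance, the reduction to the Coulomb gauge is no loss of generality.) Testing $\mathcal{I}_{A_n}$ against the non-magnetic groundstate $U$ and using the diamagnetic inequality gives $\mathcal{E}(d A_n) \to \mathcal{E}(0)$, so both energies in fact converge to $\mathcal{E}(0)$.

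The first step is to show that, after acting by a magnetic translation and a complex phase, both $u_n$ and $v_n$ converge strongly in $H^1(\R^N, \C)$ to the unique positive radial groundstate $U$ of $-\Delta U + U = U^{p-1}$. By the diamagnetic inequality $\abs{D_{A_n} u_n} \ge \abs{\nabla \abs{u_n}}$, the sequence $(\abs{u_n})$ is minimizing for the non-magnetic ground-energy on its Nehari manifold; Lions's concentration-compactness lemma combined with Kwong's uniqueness theorem produces $a_n \in \R^N$ and $\theta_n \in \R$ such that $\tilde{u}_n \defeq \e^{-i\theta_n} \tau^{A_n}_{-a_n} u_n$ satisfies $\abs{\tilde{u}_n} \to U$ in $H^1$. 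The strengthening to $\tilde{u}_n \to U$ in $H^1$ relies on the equality case of the diamagnetic inequality (which forces the phase of $\tilde{u}_n$ to flatten in the limit) together with a uniform exponential decay estimate for $(\tilde{u}_n)$, needed to control the term $A_n \tilde{u}_n$ in $L^2$ in spite of the linear growth of $A_n$. The analogous argument applied to $v_n$ yields $b_n \in \R^N$ and $\phi_n \in \R$ with $\tilde{v}_n \defeq \e^{-i\phi_n} \tau^{A_n}_{-b_n} v_n \to U$ in $H^1$.

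The second step uses non-degeneracy. Viewing $H^1(\R^N, \C)$ as a real Hilbert space, the linearization at $U$ of $u \mapsto -\Delta u + u - \abs{u}^{p-2} u$ acts on the real part as $-\Delta + 1 - (p-1) U^{p-2}$ and on the imaginary part as $-\Delta + 1 - U^{p-2}$. By the theorems of Kwong and of Weinstein, combined with the positivity of $U$, its kernel equals
\[
  K = \operatorname{span}_{\R}\{\partial_1 U, \ldots, \partial_N U, i U\},
\]
which is precisely the tangent space at $U$ to the orbit of $U$ under translations and complex phases. Adjusting $a_n$ and $\theta_n$ slightly via an implicit function argument, one can impose the orthogonality $w_n \defeq \tilde{u}_n - U \perp_{L^2} K$; the function $w_n$ then solves
\[
  \mathcal{L} w_n = Q(w_n) + \bigl( -2 i A_n \cdot \nabla - i \Div A_n + \abs{A_n}^2 \bigr) (U + w_n),
\]
where $\mathcal{L}$ is the linearization above and $Q$ is the quadratic nonlinear remainder. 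The right-hand side has size $O(\varepsilon_n)$ in a suitable exponentially-weighted norm, while $\mathcal{L}$ is an isomorphism from $K^\perp$ onto its image; a contraction-mapping argument therefore identifies $w_n$ uniquely as a function of $A_n$. The corresponding argument for $v_n$ produces a $z_n \defeq \tilde{v}_n - U$ satisfying the same fixed-point problem, so $w_n = z_n$ for $n$ large, and translating back shows that $u_n$ and $v_n$ differ by a magnetic translation and a complex phase, contradicting the assumption.

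The main obstacle is the weighted compactness in the first step: because the Coulomb-gauge potential grows linearly at infinity, the magnetic operator does not converge uniformly to $-\Delta$ on bounded subsets of $H^1$, and the strong convergence $\tilde{u}_n \to U$ genuinely requires a uniform exponential (indeed Gaussian) decay estimate for magnetic groundstates, proved separately in the paper. Once this weighted compactness is secured, the Lyapunov--Schmidt reduction in the second step is a standard application of the inverse function theorem and closes the argument.
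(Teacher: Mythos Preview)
Your overall architecture --- contradiction, convergence of both sequences to $U$ after a magnetic translation and a phase, orthogonality to the kernel, then a uniqueness mechanism --- matches the paper, but the uniqueness mechanism diverges and your version has a genuine gap. The paper explicitly considers and rejects the Lyapunov--Schmidt/implicit-function route you take: see the opening of \S\ref{section:uniquenessandsymmetry}, where it notes that the implicit function theorem ``would face the difficulty of finding the right framework to work in variable magnetic Sobolev spaces.'' Concretely, your fixed-point equation for $w_n$ carries the first-order term $-2iA_n\cdot\nabla w_n$ with a linearly growing coefficient, so the map $w\mapsto \mathcal{L}^{-1}(Q(w)+P_n(U+w))$ is not a small perturbation of the identity in any standard $H^1$ norm; in an exponentially weighted space the term $A_n\nabla w$ forces a strictly stronger weight on the right than on the left, and the contraction does not close in a single space. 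Calling this ``a standard application of the inverse function theorem'' hides precisely the obstacle the paper set out to avoid. The paper's substitute is spectral: it proves (Proposition~\ref{propositionSpectrum}) that the eigenvalues of the compact operators $L_n=(-\Delta_{A_n}+1)^{-1}W_n$ converge across the varying spaces $H^1_{A_n}$, observes that $\tilde u_n-\tilde v_n$ is an eigenvector of $L_n$ with eigenvalue $1$ (Claim~\ref{claim:tangent}), and then uses the spectral gap at $1$ together with the orthogonality of Claim~\ref{claim:ortogonality} to force $\tilde u_n=\tilde v_n$. This needs neither a single ambient Banach space nor a contraction, only the min--max characterization of eigenvalues.

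Two smaller points. The Gaussian decay you invoke as ``proved separately in the paper'' (Theorem~\ref{thm:asymptotic}) is established \emph{after} and \emph{using} Theorem~\ref{theoremUniqueness}, through the symmetry and decoupling of \S\ref{section:symmetryof}; citing it here is circular. Plain exponential decay via Kato's inequality is available independently and is all your argument actually requires, so say that instead. And the paper obtains the strong convergence $\tilde u_n\to U$ with no decay input at all, working directly in the spaces $H^1_{A_n}$ via Lemma~\ref{lemmaPalaisSmale} and Proposition~\ref{propositionContinuityGroundState}; your detour through $\abs{\tilde u_n}$ and the equality case of the diamagnetic inequality is not needed for that step.
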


The main idea of the proof of Theorem~\ref{theoremUniqueness} in \S \ref{section:uniquenessandsymmetry}
is to take advantage of the well-known uniqueness and non-degeneracy of the 
solutions of \eqref{eqNLSEMag} under a vanishing magnetic field \(A = 0\), see e.g.\ \citelist{\cite{Kwong1989}\cite{Weinstein1985}},
and to extend the uniqueness by an implicit function argument.
The main difficulty in this proof consists in the fact that the natural function space \(H^1_A (\Rset^N,\C)\) for the functional
\(\mathcal{I}_A\) \emph{depends} on the magnetic field: the norm and the elements of the space differ in general for different magnetic fields.
This rules out a straightforward application of classical implicit function theory.
Instead, we prove the uniqueness by relying on the arguments of the uniqueness part of the 
proof of the implicit theorem. Those do not rely on the completeness of the function space and spares us with the study of completeness across the scale of the spaces \(H^1_A (\Rset^N,\C)\).

A first consequence of Theorem~\ref{theoremUniqueness} is that the solutions inherit the symmetries of the magnetic potential in a sense explained below.

\begin{theorem}[Symmetry and monotonicity of groundstates]  \label{theoremSymmetry}
Let \(N \ge 2\), \(p \in (2, \frac{2N}{N - 2})\) and $\varepsilon > 0$ be as in Theorem~\ref{theoremUniqueness}. If $A \in \Lin(\R^N,\bigwedge^1 \R^N)$ is skew-symmetric and satisfies $\abs{d A} \le \varepsilon$ and if $u$ is a solution of \eqref{eqNLSEMag} such that  $\mathcal{I}_{A} (u) \le \mathcal{E} (0) +\varepsilon$, then there exists $a \in \R^N$ such that for every linear isometry $R$ of $\R^N$ satisfying \(\abs{A \circ R}^2 = \abs{A}^2\), one has 
\[
  u (R (x + a) - a) = \e^{iA(a)[R(x + a)- (x + a)]} u(x).
\]
Moreover, the function \(u\) is nondecreasing along any ray starting from the point \(a\).
\end{theorem}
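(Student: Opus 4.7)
The plan is to combine the uniqueness conclusion of Theorem~\ref{theoremUniqueness} with the equivariance of the problem under the isometries preserving $\abs{A}^2$, applied to a suitable transform of $u$ built from the rotation $x\mapsto R(x+a)-a$ around $-a$.

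I would first fix the point $a$ intrinsically to $u$. The perturbation argument underlying the proof of Theorem~\ref{theoremUniqueness} places $u$ in a small $H^1$-neighbourhood, hence by elliptic regularity a small $C^1_{\mathrm{loc}}$-neighbourhood, of some magnetic translate of the unique positive radial ground state $U$ of the non-magnetic problem $-\Delta U+U=U^{p-1}$. For $\varepsilon$ small, $\abs{u}$ has a unique maximum point, and $a$ is chosen as (the opposite of) this point so that the affine rotation $x\mapsto R(x+a)-a$ fixes the peak of $\abs{u}$.

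For a given admissible isometry $R$, I would next consider the transformed function
\[
\tilde u(x)\defeq \exp\!\bigl(-iA(a)[R(x+a)-(x+a)]\bigr)\,u\bigl(R(x+a)-a\bigr).
\]
A change of variables together with the skew-symmetry of $A$ and the pointwise identity $\abs{A(Rx)}^2=\abs{A(x)}^2$ (which is precisely what $\abs{A\circ R}^2=\abs{A}^2$ says) should give $\mathcal{I}_A(\tilde u)=\mathcal{I}_A(u)$ and show that $\tilde u$ again solves \eqref{eqNLSEMag}. When $R$ commutes with the matrix representing $A$ this is a bona fide gauge transformation; when $R$ only commutes with the matrix representing $\abs{A}^2$ but not the one representing $A$ (which the hypothesis allows in dimension $N\ge 3$), the above gauge phase is insufficient, and one has to couple the change of variables with the antilinear time-reversal symmetry $u\mapsto\bar u$, $A\mapsto -A$ of the equation to still land in the critical set of the same functional $\mathcal{I}_A$.

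Having produced such a critical point $\tilde u$ of $\mathcal{I}_A$ at the same low energy level, Theorem~\ref{theoremUniqueness} yields $b\in\R^N$ and $\theta\in\R$ with $\tilde u=\e^{i\theta}\tau^A_b u$. The choice of $a$ as (the opposite of) the unique maximum of $\abs{u}$ forces $b=0$, since $\abs{\tilde u}$ and $\abs{u}$ then attain their maximum at the same point; the phase $\theta$ is then pinned down by evaluating at $x=-a$, where the gauge factor in $\tilde u$ vanishes and $u(-a)\ne 0$. Unfolding $\tilde u=u$ gives exactly the claimed identity. For the monotonicity, the $C^1_{\mathrm{loc}}$-closeness of $u$ to the strictly radially decreasing $U$ (centered at the peak of $\abs{u}$) yields the monotonicity of $\abs{u}$ along rays through that peak inside a large ball, and the symmetry just established, combined with the Gaussian decay of $u$ proved elsewhere in the paper, upgrades it to monotonicity along every such ray on all of $\R^N$.

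The hard part of this scheme is the equivariance step. Since $\abs{A\circ R}^2=\abs{A}^2$ is strictly weaker than $R$ preserving $A$, and hence the magnetic field $dA$, itself, a naive change of variables only produces a critical point of a functional $\mathcal{I}_{\hat A}$ with $\hat A$ gauge-inequivalent to $A$. Compensating for this with exactly the right combination of a gauge transformation and, when necessary, complex conjugation, so that the transformed function is a critical point of the \emph{original} functional $\mathcal{I}_A$ and Theorem~\ref{theoremUniqueness} is applicable, is the delicate technical core of the proof.
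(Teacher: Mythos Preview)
Your high-level strategy---use the uniqueness theorem to force the transformed function to coincide with the original---is exactly the engine the paper uses. But the two concrete points you flag as ``the hard part'' are precisely where your proposal diverges from the paper and where it has genuine gaps.

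\medskip

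\textbf{The equivariance step.} You correctly observe that an isometry $R$ with $\abs{A\circ R}^2=\abs{A}^2$ need not satisfy $R_\# A=A$, and you propose repairing this with the time-reversal map $u\mapsto\bar u$, $A\mapsto -A$. That repair, however, only helps when $R_\# A=-A$ globally. For a general $A$ (already in $\R^4$ with two distinct nonzero eigenvalues of $\abs{A}^2$), the isometries preserving $\abs{A}^2$ include maps that reverse the orientation on one $2$-plane eigenspace $W_\lambda$ but not on another, so $R_\# A$ is neither $A$ nor $-A$, nor gauge-equivalent to $A$. A single global complex conjugation cannot compensate for that, and there is no ``partial'' conjugation that acts only on one block of the target $\C$. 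The paper avoids this obstacle entirely: it first proves $u\circ R=u$ only for the smaller group $\{R:R_\# A=A\}$, for which the change-of-variables argument goes through without conjugation (Proposition~\ref{prop:symmetric-solution}); then it observes (Remark~\ref{rem:A}) that on each eigenspace $W_\lambda$ this smaller group acts as $U(W_\lambda)$, whose orbits are already the full spheres, so invariance under the larger orthogonal group $O(W_\lambda)$---and hence under every $R$ with $\abs{A\circ R}^2=\abs{A}^2$---follows for free. You should reorganize the argument this way.

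\medskip

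\textbf{Choice of the center $a$ and monotonicity.} The paper fixes $a$ by the \emph{center of mass} condition $\int x\abs{u}^2=0$; this is robust (no appeal to a unique maximum needed) and makes the $h=0$ step in the uniqueness comparison a one-line moment computation. More importantly, your monotonicity argument---$C^1_{\mathrm{loc}}$ closeness to $U$ on a ball, plus decay at infinity---does not close: it leaves an intermediate annulus uncontrolled, and the $\varepsilon$ in Theorem~\ref{theoremUniqueness} is fixed once and for all, so you cannot make the ball arbitrarily large after the fact. The paper instead exploits the symmetry just obtained to \emph{decouple} the problem (Lemma~\ref{prop:symmetriesCoupling}): the mixed term $\scalprod{iAu}{Du}$ vanishes, so $u$ is simultaneously a groundstate of the nonmagnetic problem $-\Delta u+(1+\abs{A}^2)u=\abs{u}^{p-2}u$. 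Classical polarization arguments (Proposition~\ref{propSymmetryModified}) then give both the constant phase, the full symmetry, and the monotonicity along rays in one stroke, without any perturbative smallness.
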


Since the magnetic semilinear Schr\"odinger equation \eqref{eqNLSEMag} is invariant under magnetic translations, see \S \ref{sbsec:magnetic-translations},
Theorem~\ref{theoremSymmetry} implies the existence of a unique groundstate \(u\) such that its conclusion holds with \(a = 0\), that is, for every linear isometry $R$ of $\R^N$ such that \(\abs{A \circ R}^2 = \abs{A}^2\), one has \(u \circ R = u\).

Alternatively, Theorem~\ref{theoremSymmetry} states that a groundstate can be translated in such a way that it only depends monotonically on the norms of the projections on the eigenspaces of the quadratic form \(\abs{A}^2\). Also, because of the antisymmetric structure of \(A\), the group of isometries such that \(\abs{A \circ R}^2 = \abs{A}^2\) can be written, up to an isometry of the Euclidean space, as a product of orthogonal groups \(O (2n_1) \times O (2 n_2) \times \dotsb \times O (2 n_k) \times O (N {- 2n_1} {- 2 n_2} - \dotsb - 2n_k)\), with \(n_1, n_2, \dotsc, n_k \in \N\); when \(N = 3\) and \(A \ne 0\), it is always of the form \(O (2) \times O (1)\), corresponding to a decomposition in the transversal and longitudinal directions with respect to the magnetic field.

Whereas this answers positively the question about the symmetry of groundstates of the magnetic nonlinear Schr\"odinger equation \eqref{eqNLSEMag} when the magnetic field \(\abs{dA}\) is small, the question about the symmetry of groundstates for an arbitrary large magnetic field \(\abs{dA}\) remains completely open. In the planar case \(N = 2\), when the magnetic field \(\abs{dA}\) is small, the groundstates of \eqref{eqNLSEMag} correspond to the groundstates of the decoupled equation \eqref{eq:neweq}, which are non-degenerate \cite[Section 7.3]{ShiojiWatanabe2016}. This implies that  no symmetry breaking can appear by bifurcation from the radial groundstate.

In order to prove Theorem~\ref{theoremSymmetry}, we first exploit the uniqueness of the solution to prove symmetry with respect to a large subgroup of symmetries. 
Next, we note that in view of this partial symmetry, the solutions can be viewed as groundstates 
of a nonlinear Schr\"odinger equation without a magnetic field and with a quadratic electric potential \((1 + \abs{A}^2)\) (a nonlinear harmonic oscillator) and we deduce the symmetry and the monotonicity by applying classical tools for such problems.

In general, the groundstates of the nonlinear Schr\"odinger equation in the absence of a magnetic field are known to decay exponentially to \(0\) at infinity. 
By the Kato inequality, if \(u\) is a solution to the magnetic nonlinear Schr\"odinger equation \eqref{eqNLSEMag}, then its modulus \(\abs{u}\) is a subsolution to the nonlinear Schr\"odinger equation without a magnetic field and decays thus at least exponentially. One can in fact expect a better decay at infinity.
In the two-dimensional case \(N = 2\), the solutions have a Gaussian decay, see e.g.\ \citelist{\cite{Erdos1996}\cite{Shirai2008}}, similarly to Landau levels in a symmetric gauge.

Our next result is that in any dimension and for a small magnetic field, 
solutions have an improved decay rate that can be related to an exterior problem 
with a quadratic potential $(1 + |A|^2)$.

\begin{theorem}[Asymptotics of groundstates at infinity]%
\label{thm:asymptotic}%
Let \(N \ge 2\), \(p \in (2, \frac{2N}{N - 2})\) and $\varepsilon > 0$ be as in Theorem~\ref{theoremUniqueness}. 
If $A \in \Lin(\R^N,\bigwedge^1 \R^N)$ is skew-symmetric and satisfies $\abs{d A} \le \varepsilon$ 
and if $u$ is a solution of \eqref{eqNLSEMag} such that $\mathcal{I}_{A} (u) \le \mathcal{E} (0) +\varepsilon$,
then there exists \(a \in \Rset^N\) such that 
\[
 0 < \liminf_{\abs{x} \to \infty} \frac{\abs{u (x)}}{v (x - a)} 
 \le \limsup_{\abs{x} \to \infty} \frac{\abs{u (x)}}{v (x - a)} < \infty,
\]
where \(v \in C^2 (\Rset^N \setminus B_R)\) is a positive solution to 
\[
\left\{
\begin{aligned}
 -\Delta v (x) + (1 + \abs{A(x)}^2) v(x) &= 0 &&\text{for \(x \in \Rset^N \setminus B_R\)},\\
 v (x) & \to 0 & & \text{as \(x \to \infty\)}.
\end{aligned}
\right.
\]
\end{theorem}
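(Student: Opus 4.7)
\textbf{Proof plan for Theorem~\ref{thm:asymptotic}.}
By Theorems~\ref{theoremUniqueness} and \ref{theoremSymmetry}, after a translation we may assume \(a = 0\) and \(u \circ R = u\) for every linear isometry \(R\) preserving \(\abs{A}^2\). The idea is to show that \(u\) may be taken real and nonnegative, so that \(\rho = \abs{u}\) solves a scalar elliptic PDE with quadratic potential, and then to compare \(\rho\) with \(v\) on the exterior of a large ball via sub- and super-solution arguments. Writing \(u = \rho \e^{i\phi}\) on \(\{\rho > 0\}\) and separating the real and imaginary parts of \eqref{eqNLSEMag} gives
\begin{gather*}
 -\Delta \rho + \bigl(1 + \abs{\nabla \phi + A}^2\bigr)\rho = \rho^{p-1},\\
 \Div\bigl(\rho^2(\nabla \phi + A)\bigr) = 0.
\end{gather*}
The \(G\)-equivariance of \(\nabla \phi\), combined with the block decomposition \(\R^N = V_1 \oplus \dotsb \oplus V_k \oplus V_{\mathrm{long}}\) adapted to \(A\), forces \(\nabla \phi(x)\) to be parallel to \(x_i\) in each block \(V_i\); since \(A\) is antisymmetric, \((\nabla\phi) \cdot A \equiv 0\). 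The divergence identity thus reduces to \(\Div(\rho^2 \nabla \phi) = 0\), and integration against \(\phi\) together with the integrability of \(\rho^2 \abs{\nabla \phi}^2\) (which follows from \(u \in H^1_A\) and \(\int \rho^2 \abs{A}^2 < \infty\)) yields \(\nabla \phi \equiv 0\). After a constant gauge rotation, \(u = \rho \ge 0\) is real-valued and
\[
 -\Delta \rho + (1 + \abs{A(x)}^2)\rho = \rho^{p-1} \qquad \text{in } \R^N.
\]

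Standard Kato/Moser bootstrap now yields \(\rho \in L^\infty(\R^N)\) with \(\rho(x) \to 0\) as \(\abs{x} \to \infty\), so that for any \(\eta \in (0, 1)\) there is \(R_1 \ge R\) with \(\rho^{p-2} \le \eta\) on \(\R^N \setminus B_{R_1}\). On this exterior, \(\rho\) is both a \emph{supersolution} of \(-\Delta w + (1 + \abs{A}^2)w = 0\) (since \(\rho^{p-1} \ge 0\)) and a \emph{subsolution} of \(-\Delta w + (1 - \eta + \abs{A}^2)w = 0\). A Harnack inequality for the linear equation \(-\Delta w + (1 + \abs{A}^2 - \rho^{p-2})w = 0\), of which \(\rho\) is a nonnegative solution, combined with the strict positivity of the limit groundstate of the problem without magnetic field, ensures \(\rho > 0\) on \(\R^N\); in particular \(\rho\) is bounded below by a positive constant on \(\partial B_{R_1}\). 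Choosing \(c > 0\) small enough that \(\rho \ge c v\) on \(\partial B_{R_1}\) and applying the maximum principle for supersolutions on the unbounded exterior yields \(\rho \ge c v\) on \(\R^N \setminus B_{R_1}\). For the upper bound, I would first compare \(\rho\) with a principal positive decaying solution \(v_\eta\) of the \(\eta\)-shifted equation to obtain \(\rho \le C v_\eta\), and then upgrade this to \(\rho \le C' v\) by a one-step bootstrap: Agmon-type analysis of the quadratic-potential operator shows that \(v_\eta\) and \(v\) share the same leading Gaussian decay up to a polynomial prefactor, and the nonlinear source \(\rho^{p-1}\) decays strictly faster than \(v\) (because \(p > 2\) and \(v \sim \e^{-c \abs{x}^2}\)), so the polynomial slack is absorbed through the Green function representation of \(-\Delta + (1 + \abs{A}^2)\).

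The main obstacle is the elimination of the phase: the interplay of the \(G\)-equivariance, the antisymmetry of \(A\), and the transport identity for the current \(\rho^2(\nabla \phi + A)\) must be exploited block by block, using the precise structure of the symmetry group identified after Theorem~\ref{theoremSymmetry}. A secondary difficulty is the sharpness of the upper bound, where the polynomial slack produced by the \(\eta\)-perturbed comparison has to be absorbed via the faster Gaussian decay of the nonlinear source \(\rho^{p-1}\); this requires quantitative Agmon-type control of exterior Green's functions for \(-\Delta + (1 + \abs{A}^2)\).
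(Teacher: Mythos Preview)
Your reduction to the real scalar problem and the lower bound are essentially the paper's argument: the paper packages the phase elimination as ``\(u\) is a groundstate of the decoupled problem \(-\Delta u + (1+\abs{A}^2)u = \abs{u}^{p-2}u\)'' (Proposition~\ref{proposition:realGS}), then invokes the diamagnetic argument in Proposition~\ref{propSymmetryModified} to get \(u = \e^{i\theta}\abs{u}\), and the lower bound \(\rho \ge cv\) is exactly your supersolution/maximum-principle step. Your direct treatment via the current \(\rho^2(\nabla\phi + A)\) is equivalent to the paper's decoupling Lemma~\ref{prop:symmetriesCoupling}, though you should be a bit more careful about the global definition of \(\phi\) before strict positivity of \(\rho\) has been established; working with the well-defined vector field \(\operatorname{Im}(\bar u\,\nabla u)\) rather than \(\phi\) itself avoids this.

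The upper bound is where you diverge from the paper, and your route is both more laborious and partially incorrect as stated. Your two-step plan (\(\rho \le C v_\eta\), then bootstrap to \(\rho \le C' v\) via Agmon/Green-function estimates) relies on the assertion that ``\(v_\eta\) and \(v\) share the same leading Gaussian decay up to a polynomial prefactor'' and that ``\(v \sim \e^{-c\abs{x}^2}\)''. This is false whenever \(A\) has a nontrivial kernel (e.g.\ always when \(N\) is odd): in the kernel directions the decay of \(v\) is only exponential, with rate \(1\), and \(v_\eta\) has the strictly slower rate \(\sqrt{1-\eta}\), so the discrepancy is exponential, not polynomial. The bootstrap can likely be rescued (take \(\eta\) small enough that \((p-1)\sqrt{1-\eta} > 1\), so that \(\rho^{p-1} \le C v_\eta^{p-1}\) beats \(v\) even in kernel directions), but this needs an honest anisotropic analysis rather than the Gaussian heuristic you invoke.

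The paper sidesteps all of this with a one-line trick (Proposition~\ref{prop:nonLinDecay}): since \(p > 2\),
\[
-\Delta(\rho^{p-1}) \le -(p-1)\rho^{p-2}\Delta\rho,
\]
and a short computation shows that for \(\lambda\) large and \(R\) large, \(\rho + \lambda\rho^{p-1}\) is a \emph{subsolution of the unperturbed linear equation} \(-\Delta w + (1+\abs{A}^2)w = 0\) on \(\R^N\setminus B_R\). One application of the maximum principle then gives \(\rho \le \rho + \lambda\rho^{p-1} \le C v\) directly, with no \(\eta\)-shift, no Green functions, and no Agmon analysis.
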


In particular, when \(N = 2\), if $A (x)[v] = \frac{B}{2} x \wedge v$, $\abs{B} \le \varepsilon$ ($\varepsilon > 0$ given in Theorem~\ref{theoremUniqueness}) and if $u$ is a solution of \eqref{eqNLSEMag} such that $\mathcal{I}_{A} (u) \le \mathcal{E} (0) + \varepsilon$,
then there exist \(a \in \Rset^2\) and \(c \in \C \setminus \{0\}\) such that, as \(\abs {x} \to \infty\),
\[  
u (x)
  = \frac{\exp \bigl(- \frac{\abs{B}\,\abs{x - a}^2}{4}\bigr)}{\abs{x - a}^{\frac{1}{2}} \bigl(1 + \abs{B} \, \abs{x - a}\bigr)^{\frac{1}{2}+\frac{1}{\abs{B}}}} \bigl(c + o (1)\bigr).
\]
This refines for a constant magnetic field and a vanishing electric field the Gaussian decay that was already known in \citelist{\cite{Erdos1996}\cite{Shirai2008}}. The Gaussian decay is reminiscent of the decay of Landau states. 

In higher dimensions, the linear problem is in general anisotropic and we do not hope
having such an explicit expression of the asymptotics. Theorem~\ref{thm:asymptotic} allows however 
to obtain some Gaussian asymptotic \emph{upper bounds}. For example in the three-dimensional case \(N = 3\), we have for any small magnetic field \(B \in \Rset^3 \simeq \bigwedge^2 \Rset^3\),
\begin{equation}
\label{eq3dDecay}
  u (x)  = O \biggl( \exp \Bigl(- \frac{\abs{B \times (x - a)}^2}{4 \abs{B}} \Bigr)\biggr),
\end{equation}
where the typical Gaussian decay of Landau states can again be recognized.
These estimates are far from optimal: they do not enforce any decay in the \(B\)-direction whereas it follows also from Theorem~\ref{thm:asymptotic} that \(u\) decays at least exponentially in all the directions. Theorem~\ref{thm:asymptotic} follows by looking at the solutions of \eqref{eqNLSEMag} as solutions to a modified problem without magnetic field and analyzing the decay with comparison arguments. 

The last question that we consider is the dependence of the ground-energy function \(\mathcal{E}\) (see \eqref{eq:E(B)}) on the magnetic field \(B\) and in particular its monotonicity.

\begin{theorem} \label{thm:energy}
If \(N \ge 2\) and \(p \in (2, \frac{2N}{N - 2})\), then the ground-energy function \(\mathcal{E}\) is continuously differentiable and convex in a neighbourhood of \(0\),
where it achieves a global minimum.
Moreover, 
\[
 \mathcal{E} (B) = \mathcal{E} (0) + \frac{\abs{B}^2}{4 N} \int_{\R^N} \abs{x}^2 \abs{u_0 (x)}^2\dif x + o (\abs{B}^2),
\]
where $u_0$ is a radial groundstate of \eqref{eqNLSEMag} with $A = 0$.
\end{theorem}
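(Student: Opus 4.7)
The plan is to establish the four assertions---global minimum, \(C^1\) regularity, quadratic expansion, and local convexity---in that order, leveraging Theorem~\ref{theoremUniqueness}. For the \textbf{global minimum} at \(B=0\), combine the pointwise diamagnetic inequality \(\abs{D_{A} u} \geq \abs{\nabla \abs{u}}\) with the mountain-pass/Nehari characterisation \(\mathcal{E}(B) = \inf_{u\ne 0}\max_{t>0}\mathcal{I}_{A_B}(tu)\): for every \(u\neq 0\) and \(t>0\), \(\mathcal{I}_{A_B}(tu) \geq \mathcal{I}_0(t\abs{u})\) (the electric and \(L^p\) terms agree while the kinetic term only increases), so \(\mathcal{E}(B) \geq \mathcal{E}(0)\) with equality trivially at \(B=0\).

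For \textbf{smooth dependence and \(C^1\) regularity}, Theorem~\ref{theoremUniqueness} together with the non-degeneracy of the real radial groundstate \(u_0\) at \(A=0\) produces, via the quantitative implicit-function-type argument underlying its proof, a smooth family \(B \mapsto u_B\) (in the symmetric gauge \eqref{eq:Bgauge} after fixing translation and phase) with \(\mathcal{I}'_{A_B}(u_B) = 0\) and \(u_B \to u_0\) as \(B \to 0\). Since \(u_B\) is a critical point, the envelope identity
\[
\mathcal{E}'(B) = \partial_B \mathcal{I}_{A_B}(u)|_{u = u_B}
\]
gives \(\mathcal{E} \in C^1\) in a neighbourhood of \(0\).

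For the \textbf{quadratic expansion}, use \(u_0\) itself as a test function. Since \(u_0\) is real, \(\mathrm{Im}(\bar u_0 D u_0) \equiv 0\), so the linear-in-\(B\) cross term vanishes and, using \(\abs{A_B(x)}^2 = \tfrac{1}{4}\abs{Bx}^2\) together with the radial identity \(\int x_j x_k \abs{u_0}^2 = \tfrac{\delta_{jk}}{N}\int \abs{x}^2 \abs{u_0}^2\),
\[
\mathcal{I}_{A_B}(u_0) = \mathcal{E}(0) + \tfrac{1}{2}\textstyle\int \abs{A_B}^2 \abs{u_0}^2 = \mathcal{E}(0) + \tfrac{\abs{B}^2}{4N}\textstyle\int \abs{x}^2 \abs{u_0}^2.
\]
The Nehari rescaling of \(u_0\) satisfies \(t_B = 1 + O(\abs{B}^2)\), so \(\max_{t} \mathcal{I}_{A_B}(tu_0) - \mathcal{I}_{A_B}(u_0) = O(\abs{B}^4)\), yielding the upper bound. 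For the matching lower bound I would show \(\norm{u_B - u_0} = O(\abs{B}^2)\): an integration by parts exploiting \(\Div A_B = 0\) gives
\[
\bigl\langle \mathcal{I}'_{A_B}(u_0), \phi \bigr\rangle = \textstyle\int \abs{A_B}^2 u_0 \,\mathrm{Re}\,\phi + 2 \int A_B \cdot D u_0 \,\mathrm{Im}\,\phi,
\]
and the ostensibly \(O(\abs{B})\) second term is annihilated by the antisymmetry identity \(A_B(x) \cdot D u_0(x) = \tfrac{u_0'(\abs{x})}{2\abs{x}} B[x,x] = 0\). Hence \(\mathcal{I}'_{A_B}(u_0) = O(\abs{B}^2)\); the non-degeneracy of \(u_0\) then yields \(\norm{u_B - u_0} = O(\abs{B}^2)\), and a Taylor expansion of \(\mathcal{I}_{A_B}\) around \(u_0\) produces \(\mathcal{E}(B) - \mathcal{I}_{A_B}(u_0) = O(\abs{B}^4) = o(\abs{B}^2)\).

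For \textbf{local convexity}, the expansion identifies the Hessian of \(\mathcal{E}\) at \(0\) as \(\tfrac{1}{2N}\bigl(\int \abs{x}^2\abs{u_0}^2\bigr)\mathrm{Id}\), which is positive definite. Upgrading the implicit-function map \(B \mapsto u_B\) to \(C^2\) promotes \(\mathcal{E}\) itself to \(C^2\) near \(0\), and the Hessian stays positive definite on a possibly smaller neighbourhood by continuity, whence convexity. The principal obstacle I anticipate is the cancellation \(A_B \cdot D u_0 \equiv 0\) and the ensuing improved estimate \(\norm{u_B - u_0} = O(\abs{B}^2)\); without it only \(\mathcal{E}(B) = \mathcal{E}(0) + O(\abs{B}^2)\) would follow, with no identifiable second-order coefficient. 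A secondary technical point is to handle the magnetic-translation and complex-phase symmetries consistently so that the smooth parametrisation \(B \mapsto u_B\) is well defined.
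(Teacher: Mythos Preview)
Your treatment of the global minimum and of the second-order expansion \emph{at the origin} is essentially correct and in fact slightly more direct than the paper's: the key cancellation \(A_B(x)\cdot D u_0(x)=\tfrac{u_0'(|x|)}{|x|}A_B(x)[x]=0\) is exactly what makes the expansion work, and your argument that \(\|u_B-u_0\|=O(|B|^2)\) via non-degeneracy is sound.

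There is, however, a genuine gap in the \(C^1\) and convexity parts. You only establish differentiability of \(\mathcal{E}\) \emph{at} \(B=0\); the theorem asserts continuous differentiability on a full neighbourhood, which requires computing \(\mathcal{E}'(B_*)\) for every small \(B_*\ne 0\). At such a point the groundstate \(u_{A_*}\) is no longer radial, the quadratic form \(|A_*|^2\) has a nontrivial eigenspace decomposition, and as \(B\) varies near \(B_*\) these eigenspaces can collide or split. Your appeal to ``the implicit-function-type argument underlying Theorem~\ref{theoremUniqueness}'' does not produce the smooth family \(B\mapsto u_B\) you claim: the paper explicitly avoids the implicit function theorem because the underlying Banach space \(H^1_A\) varies with \(A\), and the uniqueness proof in \S\ref{section:uniquenessandsymmetry} yields no regularity in \(B\). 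The paper instead proves a tailored differentiability statement (Proposition~\ref{propositionDifferentiability}) for \(u_A\) at each \(A_*\), reduces via the decoupling of \S\ref{section:symmetryof} to a problem depending only on the eigenvalues of \(|A|^2\), and then invokes a lemma on the differentiability of \emph{sums} of eigenvalues (Lemma~\ref{lemmaDiffAverEigenval}) to pass through possible eigenvalue crossings. None of this is visible in your sketch.

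For convexity, your plan is to upgrade \(B\mapsto u_B\) to \(C^2\), hence \(\mathcal{E}\) to \(C^2\), and use continuity of the Hessian. This upgrade is asserted but not argued, and the paper does not attempt it. Instead, once the formula \(\mathcal{E}'(B_*)[B-B_*]=\tfrac14\int B_*(x)\cdot(B(x)-B_*(x))\,|u_{A_*}|^2\) is available at every \(B_*\), the paper checks monotonicity of the gradient directly:
\[
\langle \mathcal{E}'(B)-\mathcal{E}'(B_*),\,B-B_*\rangle \ge \tfrac14\int |B(x)-B_*(x)|^2|u_{A_*}|^2 - o(|B-B_*|^2)\ge 0,
\]
using only the first-order control \(\|u_A-u_{A_*}\|=O(\||A|^2-|A_*|^2\|)\). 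This bypasses any need for \(C^2\).
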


While it is clear from the diamagnetic inequality (see \eqref{eqDiamagnetic} below) that \(\mathcal{E} (t B) \ge \mathcal{E} (0)\), the monotonicity is much more subtle. Theorem~\ref{thm:energy} shows this monotonicity for a small field. Indeed, by convexity we have 
$\mathcal{E} (t B) \le (1- t) \mathcal{E} (0) + t \mathcal{E} (B)$
and therefore 
$\mathcal{E} (t B) \le \mathcal{E} (B)$
for all $t\in[0,1]$.

The asymptotic expansion in Theorem~\ref{thm:energy} also shows that for small magnetic fields in large dimensions, that is \(N \ge 4\), the different components of the magnetic field \(B\) simply add up their contributions.

In a work about the semiclassical limit of nonlinear Schr\"odinger equation in the strong magnetic field r\'egime, it had been shown that the ground-energy was subdifferentiable and that it had a subdifferential that was allowing to recover the Lorentz electromagnetic force acting on a magnetic dipole \cite{DiCosmoVanSchaftingen2015}*{Proposition 3.5} (see also \citelist{\cite{FournaisLeTreustRaymondVanSchaftingen}\cite{FournaisRaymond}}).
Theorem~\ref{thm:energy} reinforces this analysis.

We prove Theorem~\ref{thm:energy} by applying a direct argument to show the differentiability of the solutions with respect to variations of the magnetic field.

\smallbreak

The article is organized as follows. In \S \ref{section:preliminaries}, we recall some definitions about the magnetic Sobolev spaces, for which we prove some general theorems. We also give in more details the definition of the groundstates of \eqref{eqNLSEMag}, with and without magnetic field, and we recall some of their known properties. In \S \ref{section:continuitygroundstateenergy}, we prove the continuity of the ground-energy function $\mathcal{E}$.
The uniqueness (Theorem~\ref{theoremUniqueness}) is proved in \S \ref{section:uniquenessandsymmetry} and the symmetry (Theorem~\ref{theoremSymmetry}) in \S \ref{section:symmetryof}. 
The asymptotics of groundstates (Theorem~\ref{thm:asymptotic}) are studied in \S \ref{section:asymptotic} whereas the properties of the function \(\mathcal{E}\) in a neighbourhood of \(0\) are studied  in \S \ref{sec:diff}.

\section{Preliminaries}  \label{section:preliminaries}

\subsection{Magnetic Sobolev spaces}

In this section we begin by reviewing the definitions of covariant derivative, magnetic Sobolev spaces, diamagnetic inequality and magnetic translations.
We then study the problem of convergence of sequences of functions taken in varying
magnetic Sobolev spaces.

\subsubsection{Definition of magnetic Sobolev spaces}
Magnetic Sobolev spaces are a natural framework for the magnetic semilinear Schr\"odinger equation \eqref{eqNLSEMag}.
If \(A \in L^2_{\mathrm{loc}} (\Rset^N)\), which is the case for $A \in \Lin(\mathbb{R}^N, \bigwedge^1 \R^N)$, for \(u \in W^{1, 1}_{\mathrm{loc}} (\Rset^N, \C)\), the covariant derivative is  defined by 
\[
  D_A u \defeq Du + i  Au \, : \,  \Rset^N \to \Lin (\Rset^N, \C) \simeq \C \otimes \textstyle\bigwedge^1 \Rset^N.
\]
We define for $F : \mathbb{R}^N \rightarrow \mathbb{C}^N$ the \emph{covariant divergence} by
\[
 \Div_A F \defeq \Div F + i A[F],
\]
that is, for every \(x \in \Rset^N\) and \(z \in \C\),
\[
 \scalprod{z}{\Div_A F (x)} = \scalprod{z}{\Div  F(x)}  + \scalprod{z}{i A (x)[F (x)]}.
\]
Here and in the sequel $\scalprod{\cdot}{\cdot}$ denotes the canonical \emph{real scalar product} of vectors in $\C$ (on the left-hand side) and in $\Lin(\R^N,\C)$ (on the right-hand side).
We note that \(D_A u \in L^1_{\mathrm{loc}} (\Rset^N)\) is characterized by the fact that, for every test function \(\varphi \in C^1_c (\Rset^N, \C^N)\), the following integration by parts formula is satisfied
\begin{equation}
\label{eqIntegrationParts}
  \int_{\Rset^N} \scalprod{D_A u}{\varphi} = - \int_{\Rset^N} \scalprod{u}{\Div_A \varphi}.
\end{equation}
The \emph{magnetic Sobolev space} \(H^1_A (\Rset^N,\C)\), defined as
\begin{equation*}
 H^1_{A}(\R^N, \mathbb{C}) \defeq \bigl\{u \in L^2(\R^N, \mathbb{C}) \, : \,  D_A u \in L^2(\R^N) \bigr\},
\end{equation*}
is a Hilbert space endowed with the Euclidean norm
\begin{equation*}
\norm{u}_{H^1_A (\R^N, \mathbb{C})}^2 = \int_{\R^N} \abs{D_A u}^2 + \abs{u}^2,
\end{equation*}
deriving from the \emph{real scalar product}
\[
 \scalprod{u}{v}_{H^1_A (\Rset^N,\C)} = \int_{\R^N} \scalprod{D_A u}{D_A v} + \scalprod{u}{v}.
\]
Finally, the space of compactly supported smooth functions $C^{\infty}_c(\R^N,\C)$ is dense in the magnetic Sobolev space $H^1_A(\R^N,\C)$, see e.g. \citelist{\cite{LiebLoss}*{Theorem 7.22}\cite{EstebanLions1999}*{Proposition 2.1}}.

In the following sections, we will be interested in the case where $A \in \Lin(\R^N, \bigwedge^1 \R^N)$, that is \(A\) is a linear map from \(\R^N\) to linear forms on \(\R^N\). In this case \(A\) is continuous, and thus in particular is in \(L^2_{\mathrm{loc}} (\R^N, \bigwedge^1 \R^N)\).

\subsubsection{Diamagnetic inequality}

We first recall that the connexion $D_A$ is compatible with the Euclidean norm on $\C$. Indeed, if $A \in L^2_\mathrm{loc} (\R^N)$ and $u \in H^1_A (\R^N, \mathbb{C})$, then  $u \in W^{1, 1}_\mathrm{loc} (\R^N, \mathbb{C})$, and by the chain rule for vector-valued functions \cite{AmbrosioDalMaso1990},
\begin{equation} \label{eqDerivativeModulus}
  D \abs{u} = \scalprod{\sign (u)}{D_A u},
\end{equation}
where the sign of a complex number $z \in \C$ is defined by 
\begin{equation*}
 \sign (z) = \left\{ \begin{aligned} &  \frac{z}{\abs{z}} \quad & z \ne 0,\\
                     &   0  \quad & z = 0.
             \end{aligned} \right.
\end{equation*}
In particular the identity \eqref{eqDerivativeModulus} implies the diamagnetic inequality
\begin{equation}\label{eqDiamagnetic}
  \abs{D \abs{u}} \le \abs{D_A u},
\end{equation}
with equality if and only if $D_A u = \sign (u) D\abs{u}$, see for example \cite{LiebLoss}*{theorem 7.21}.

\subsubsection{Magnetic translations}\label{sbsec:magnetic-translations}
The magnetic translations correspond to a parallel transport with respect to the connection \(D_A\).
If \(A \in \Lin(\R^N, \bigwedge^1 \R^N)\) is skew-symmetric, then, for every \(x, v \in \Rset^N\), \(A (x)[v] = - A (v)[x]\), and if \(u : \Rset^N \to \C\), we have 
\[
 \tau^A_a u (x) \defeq \e^{-iA (a)[x]} u (x - a).
\]

The magnetic translations are compatible with the connection, i.e., for every \(u \in H^1_A (\Rset^N,\C)\), Leibnitz's rule implies
\[
\begin{split}
  D_A (\tau^A_a v) (x) & = \e^{-i A (a) [x]}\bigl(D v (x - a) - i A (a) v (x - a) + i A (x) v (x - a) \bigr)\\
                       & = \e^{-i A (a) [x]} \bigl(D v (x - a) + i A (x - a) v (x - a)\bigr) \\
                       & = \tau^A_a D_A v (x),
\end{split}
\]
for every \(x \in \Rset^N\), so that 
\[
 D_A \circ \tau^A_a = \tau^A_a \circ D_A .
\]
We observe that in general, magnetic translations do not commute.
Indeed, one has 
\[
 \tau^A_b \tau^A_a u (x) = \e^{-iA(b)[x]-iA(a)[x - b]} u (x - a - b)
 = \e^{i A (a)[b]} \tau^A_{a + b} u (x - (a + b)), 
\]
for each \(x \in \Rset^N\) and therefore 
\begin{equation} 
  \label{eq:composition-translations}
  \tau^A_b \circ \tau^A_a = \e^{i A(a) [b]} \tau^A_{a+b}.
\end{equation}
If we consider the space \(\R^N \times \R\), endowed with the product \(\star\) defined for \((a, t), (b, s) \in \R^N \times \R\) by
\[
  (b, s) \star (a, t) = (a + b, t + s + i A (a)[b]), 
\]
we see that \((a, t) \mapsto \e^{i t} \tau^A_a \) defines a group action whose kernel is \(\{0\} \times 2 \pi \Z\). This group is isomorphic to \(\mathbb{H}^k \times \Rset^{N - 2k}\), where \(\mathbb{H}^k\) is the \(2 k + 1\)--dimensional \(k\)-th order Heisenberg group and \(2 k\) is the rank of the matrix \(A\).
In particular, for every $s, t \in \mathbb{R}$,
\begin{equation*}
\tau^A_{s a} \circ \tau^A_{ta} = \tau^A_{(t+s)a},
\end{equation*}
so that the translations in the same direction form a group.

\subsubsection{Convergence across magnetic Sobolev spaces} \label{section:magneticspaces}

In this section, we develop some counterparts of classical results in Sobolev spaces to study convergence of sequences of maps belonging to different magnetic Sobolev spaces. The common assumption of the next statements is that the sequence of vector potentials converges e.g.\ in $L^2_{\mathrm{loc}}(\mathbb{R}^N)$.

\begin{lemma}[Weak closure across magnetic Sobolev spaces]\label{lemma:WeakClosure}
Assume that for every \(n \in \N\),  $u_n \in H^1_{A_n} (\R^N,\mathbb{C})$, where $(A_n)_{n \in \N}$ is a sequence in $L^2_\mathrm{loc} (\R^N, \bigwedge^1 \R^N)$. If $A_n \to A$ strongly in $L^2_\mathrm{loc}(\R^N)$, $u_n \weakto u$ weakly in $L^2 (\R^N)$, and $D_{A_n} u_n \weakto g$ weakly in $L^2 (\R^N)$  as $n \to \infty$, then $u \in H^1_A (\R^N, \mathbb{C})$ and $D_A u = g$. 
\end{lemma}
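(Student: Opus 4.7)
The plan is to characterize $D_A u$ through the distributional identity \eqref{eqIntegrationParts} and pass to the limit there using a weak--strong pairing argument. Once we show that for every test field $\varphi \in C^1_c(\R^N,\C^N)$
\[
 \int_{\R^N} \scalprod{g}{\varphi} = -\int_{\R^N} \scalprod{u}{\Div_A \varphi},
\]
the definition of the covariant derivative gives $D_A u = g$ in the sense of distributions, and since $u \in L^2(\R^N)$ and $g \in L^2(\R^N)$, we conclude $u \in H^1_A(\R^N,\C)$.

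First, I would fix $\varphi \in C^1_c(\R^N,\C^N)$ and write, for each $n$, the identity
\[
 \int_{\R^N} \scalprod{D_{A_n} u_n}{\varphi} = -\int_{\R^N} \scalprod{u_n}{\Div_{A_n} \varphi},
\]
which is available because each $u_n \in H^1_{A_n}(\R^N,\C)$ and $A_n \in L^2_{\mathrm{loc}}(\R^N)$. The left-hand side converges to $\int_{\R^N}\scalprod{g}{\varphi}$ by the weak convergence $D_{A_n} u_n \weakto g$ in $L^2(\R^N)$, together with $\varphi \in L^2(\R^N)$.

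For the right-hand side, I decompose
\[
 \Div_{A_n} \varphi - \Div_A \varphi = i(A_n - A)[\varphi].
\]
Since $\supp \varphi$ is compact and $A_n \to A$ in $L^2_{\mathrm{loc}}(\R^N)$, the quantity $(A_n - A)[\varphi]$ tends to $0$ strongly in $L^2(\R^N)$; hence $\Div_{A_n} \varphi \to \Div_A \varphi$ strongly in $L^2(\R^N)$. Combined with $u_n \weakto u$ weakly in $L^2(\R^N)$, the weak--strong pairing yields
\[
 \int_{\R^N} \scalprod{u_n}{\Div_{A_n} \varphi} \longrightarrow \int_{\R^N} \scalprod{u}{\Div_A \varphi}.
\]

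Putting the two limits together produces the desired weak identity, which by \eqref{eqIntegrationParts} exactly says $D_A u = g$ in the distributional sense, and therefore $u \in H^1_A(\R^N,\C)$ with $D_A u = g \in L^2(\R^N)$. The argument is essentially a soft weak--strong convergence step; the only care to take lies in handling the term $A_n[\varphi]$, which is straightforward thanks to the compact support of the test function and the $L^2_{\mathrm{loc}}$ hypothesis on the potentials.
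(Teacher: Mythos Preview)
Your proof is correct and follows essentially the same approach as the paper: both test against a fixed $\varphi \in C^1_c(\R^N,\C^N)$, use the integration by parts identity \eqref{eqIntegrationParts} at level $n$, pass to the limit via weak--strong pairing (exploiting that $\Div_{A_n}\varphi \to \Div_A\varphi$ strongly in $L^2$ thanks to the compact support of $\varphi$ and the $L^2_{\mathrm{loc}}$ convergence of the potentials), and conclude $g = D_A u$. Your version is slightly more explicit in isolating the difference $\Div_{A_n}\varphi - \Div_A\varphi = i(A_n-A)[\varphi]$, but the argument is the same.
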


\begin{proof}
By definition of the weak covariant derivative $D_{A_n} u_n$, for every $\varphi \in C^\infty_c (\R^N, \mathbb{C}^N)$ and $n \in \N$, in view of \eqref{eqIntegrationParts}, we have
\[
 \int_{\R^N} \scalprod{D_{A_n} u_n}{\varphi} = - \int_{\R^N} \scalprod{u_n}{\Div_{A_n} \varphi }.
\]
Since $\Div_{A_n} \varphi \to \Div_{A} \varphi$ as \(n \to \infty\) in $L^2 (\R^N,\mathbb{C})$, because of the strong convergence of $A_n$ to $A$ in $L^2_{\mathrm{loc}}(\mathbb{R}^N)$, and using the facts that $u_n \weakto u$ and $D_{A_n} u_n \weakto g$ as \(n \to \infty\) in $L^2(\mathbb{R}^N)$, we conclude that 
\[
\int_{\R^N} \scalprod{g}{\varphi} = - \int_{\R^N} \scalprod{u}{\Div_A \varphi} = \int_{\mathbb{R}^N} \scalprod{D_A u}{\varphi},
\]
where the last equality follows again from \eqref{eqIntegrationParts}. This shows that $g = D_A u = D u + i A u$, using \cite{EstebanLions1999}*{Proposition 2.1}.
\end{proof}

The next lemma deals with bounded sequences in distinct magnetic Sobolev spaces.

\begin{lemma}[Weak sequential compactness across magnetic Sobolev spaces] \label{lemma:WeakCompactness}
Assume that for every \(n \in \N\),  $u_n \in H^1_{A_n} (\R^N,\mathbb{C})$, where $(A_n)_{n \in \N}$ is a sequence in $L^2_\mathrm{loc} (\R^N, \bigwedge^1 \R^N)$. If $A_n \to A$ strongly as \(n \to \infty\) in $L^2_\mathrm{loc}(\R^N)$ and
\[
\liminf_{n \to \infty} \int_{\R^N} \abs{D_{A_n} u_n}^2 + \abs{u_n}^2 <\infty,
\]
there exist $u \in H^1_A (\R^N, \mathbb{C})$ and a subsequence $(n_\ell)_{\ell \in \N}$ such that 
$u_{n_\ell} \weakto u$ weakly in $L^2( \mathbb{R}^N)$ and $D_{A_{n_\ell}} u_{n_\ell} \weakto D_A u$ weakly in $L^2 (\R^N)$ as $\ell \to \infty$.
\end{lemma}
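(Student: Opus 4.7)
The plan is to reduce this lemma to a direct combination of the Banach--Alaoglu theorem for the Hilbert space $L^2(\R^N)$ together with the weak closure statement of Lemma~\ref{lemma:WeakClosure}. The key architectural observation is that although the magnetic Sobolev spaces $H^1_{A_n}(\R^N,\C)$ vary with $n$, the functions $u_n$ and their covariant derivatives $D_{A_n} u_n$ all live in the \emph{fixed} Hilbert space $L^2(\R^N)$, so weak sequential compactness is available regardless of the potentials $A_n$.

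First, by the definition of $\liminf$, I would extract a subsequence (relabelled $(u_n)_{n \in \N}$) along which $\int_{\R^N} \abs{D_{A_n} u_n}^2 + \abs{u_n}^2 \le C$ for some finite constant $C$. In particular, both $(u_n)_{n \in \N}$ and $(D_{A_n} u_n)_{n \in \N}$ are bounded sequences in $L^2(\R^N, \C)$ and $L^2(\R^N, \C \otimes \bigwedge^1 \R^N)$ respectively.

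Second, applying weak sequential compactness of bounded sets in a Hilbert space twice (with a diagonal extraction if needed), I would produce a further subsequence $(n_\ell)_{\ell \in \N}$ and elements $u \in L^2(\R^N, \C)$ and $g \in L^2(\R^N, \C \otimes \bigwedge^1 \R^N)$ such that $u_{n_\ell} \weakto u$ and $D_{A_{n_\ell}} u_{n_\ell} \weakto g$ weakly in $L^2(\R^N)$ as $\ell \to \infty$.

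Finally, I would invoke Lemma~\ref{lemma:WeakClosure} applied to the triple $(A_{n_\ell}, u_{n_\ell}, D_{A_{n_\ell}} u_{n_\ell})$: the hypothesis $A_n \to A$ strongly in $L^2_\mathrm{loc}(\R^N)$ is inherited by any subsequence, and the two required weak $L^2$ convergences have just been established. Lemma~\ref{lemma:WeakClosure} therefore gives $u \in H^1_A(\R^N,\C)$ together with $g = D_A u$, which is precisely the conclusion. I do not expect any genuine obstacle here: each reduction step is classical, and the actual non-triviality of the magnetic compactness statement has already been absorbed into the preceding closure lemma, which is why the two lemmas appear and are proved in this order.
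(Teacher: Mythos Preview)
Your proposal is correct and matches the paper's approach exactly: the paper's proof is the single sentence ``This follows from the standard weak sequential compactness criterion in Hilbert spaces and Lemma~\ref{lemma:WeakClosure},'' which is precisely the Banach--Alaoglu extraction in $L^2$ followed by an application of the closure lemma that you describe.
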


\begin{proof}
This follows from the standard weak sequential compactness criterion in Hilbert spaces and Lemma~\ref{lemma:WeakClosure}.
\end{proof}

Weakly converging sequences across Sobolev spaces converge strongly in Lebesgue spaces on compact subsets.

\begin{lemma}[Rellich's Theorem across magnetic spaces] \label{lemmaRellich}
Assume that for every \(n \in \N\), $u_n \in H^1_{A_n} (\R^N,\mathbb{C})$, where $(A_n)_{n \in \N}$ is a sequence in $L^2_\mathrm{loc} (\R^N, \bigwedge^1 \R^N)$. 
If $A_n \to A$ strongly in $L^2_\mathrm{loc}(\R^N)$, and $u_n \weakto u$ weakly in $L^2_\mathrm{loc} (\R^N)$, $D_{A_n} u_n \weakto D_A u$ weakly in $L^2_\mathrm{loc} (\R^N)$, then $u_n \rightarrow u$ strongly in $L^p_\mathrm{loc} (\R^N)$ as $n \to \infty$, for $1 \leq p < \frac{2N}{N-2}$.
\end{lemma}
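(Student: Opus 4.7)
My plan is to reduce the magnetic statement to the classical Rellich--Kondrachov theorem by controlling the ordinary gradient \(Du_n\) in some (non-magnetic) Sobolev scale.

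I would begin by noting that the weak convergences of \(u_n\) and \(D_{A_n}u_n\) in \(L^2_{\mathrm{loc}}(\R^N)\) guarantee uniform bounds on \(\|u_n\|_{L^2(K)}\) and \(\|D_{A_n}u_n\|_{L^2(K)}\) on every compact \(K\subset\R^N\). Invoking the diamagnetic inequality \eqref{eqDiamagnetic} pointwise then shows that the modulus \(\abs{u_n}\) is uniformly bounded in \(H^1_{\mathrm{loc}}(\R^N)\), and the classical Sobolev embedding upgrades this to a uniform bound of \(u_n\) (and of \(\abs{u_n}\)) in \(L^{2^*}_{\mathrm{loc}}(\R^N)\), where \(2^*=2N/(N-2)\) if \(N\ge 3\), and \(2^*\) may be taken to be any arbitrarily large finite exponent if \(N=2\).

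The second step exploits the identity \(Du_n = D_{A_n}u_n - iA_nu_n\). The first summand is bounded in \(L^2_{\mathrm{loc}}\) by hypothesis, while H\"older's inequality applied with the \(L^2_{\mathrm{loc}}\)-bound on \(A_n\) (arising from its strong convergence to \(A\)) and the \(L^{2^*}_{\mathrm{loc}}\)-bound on \(u_n\) just obtained gives \(A_nu_n\) bounded in \(L^s_{\mathrm{loc}}\) with \(1/s = 1/2 + 1/2^* = (N-1)/N\), that is, \(s = N/(N-1)\). Hence \(u_n\) is bounded in the ordinary Sobolev space \(W^{1,s}_{\mathrm{loc}}(\R^N)\), and the classical Rellich--Kondrachov theorem, together with a diagonal extraction along an exhaustion of \(\R^N\) by compact sets, produces a subsequence converging strongly in \(L^r_{\mathrm{loc}}\) for every \(r < Ns/(N-s) = N/(N-2)\) (respectively, for every \(r<\infty\) if \(N=2\)). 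The weak \(L^2_{\mathrm{loc}}\) convergence forces the limit to coincide with \(u\), and since every subsequence admits a further subsequence with the same limit, the entire sequence converges.

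To close the gap between \(L^r_{\mathrm{loc}}\) convergence for \(r<N/(N-2)\) and the claimed range \(p<2^* = 2N/(N-2)\), I would interpolate via H\"older's inequality between this strong convergence and the uniform \(L^{2^*}_{\mathrm{loc}}\) bound from the first step. The main subtlety lies precisely here: because \(A_n\) is assumed only in \(L^2_{\mathrm{loc}}\), the product \(A_nu_n\) lies only in a Sobolev class with exponent \(s = N/(N-1)<2\), and direct compactness of \(W^{1,s}\) reaches only \(L^{2^*/2}_{\mathrm{loc}}\); the diamagnetic inequality is essential in that it is what secures the \(L^{2^*}_{\mathrm{loc}}\) control on \(u_n\) needed to bridge \(L^{2^*/2}\) up to the full range \(p<2^*\) by interpolation.
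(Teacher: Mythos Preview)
Your proof is correct and follows essentially the same approach as the paper: decompose \(Du_n = D_{A_n}u_n - iA_nu_n\), bound the ordinary gradient in a low Sobolev exponent, apply classical Rellich--Kondrachov, and then interpolate against the \(L^{2^*}_{\mathrm{loc}}\) bound coming from the diamagnetic inequality. The only cosmetic difference is that the paper pairs \(A_n\in L^2_{\mathrm{loc}}\) with \(u_n\in L^2_{\mathrm{loc}}\) to place \(Du_n\) in \(L^1_{\mathrm{loc}}\) and obtain compactness up to \(L^{N/(N-1)}_{\mathrm{loc}}\), whereas you first invoke the \(L^{2^*}_{\mathrm{loc}}\) bound and pair it with \(A_n\) to reach \(W^{1,N/(N-1)}_{\mathrm{loc}}\) and compactness up to \(L^{N/(N-2)}_{\mathrm{loc}}\); either way the final interpolation closes the gap to \(p<2^*\).
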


\begin{proof}
Let $R > 0$. First, we will prove the strong convergence of $u_n$ to $u$ in $L^p(B_R)$, for every $p \in [1, \frac{N}{N-1})$. Since $A_n \to A$ strongly in $L^2(B_R)$ and $u_n \weakto u$ weakly in $L^2 (B_R)$ as $n \to \infty$, the sequence
$(A_n u_n)_{n \in \N}$ is bounded in $L^1 (B_R)$. Thus, since $D_{A_n} u_n \weakto D_A u$ weakly in $L^2(B_R)$, we have that $(D u_n)_{n \in \N}$ is also bounded in $L^1 (B_R)$. Therefore, Rellich's compactness theorem in $W^{1, 1} (B_R)$ tells us that $u_n \to u$ strongly in $L^p (B_R)$, for every $1 \leq p < \frac{N}{N-1}$, as $n \to \infty$.

Next, if $1 \leq q \leq \frac{2N}{N-2}$, there exists a constant $C > 0$ such that for every $n \in \N$, in view of the diamagnetic inequality \eqref{eqDiamagnetic} and of the classical Sobolev embedding,  
\[
  \Bigl(\int_{B_R} \abs{u_n}^q\Bigr)^\frac{2}{q} \le C \int_{B_R} \abs{D \abs{u_n}}^2 + \abs{u_n}^2
  \le C \int_{B_R} \abs{D_{A_n} u_n}^2 + \abs{u_n}^2.
\]
Then, $(u_n)_{n \in \N}$ is bounded in $L^q (B_R)$, for $q \in [1, \frac{2N}{N-2}]$.
By a standard interpolation argument, we conclude that $u_n \to u$ as \(n \to \infty\) in $L^p (B_R)$, for every $p \in [1, \frac{2N}{N-2})$.
\end{proof}

In the case of the strong convergence of a sequence of maps \((u_n)\)across Sobolev spaces, the moduli converge also strongly in a Sobolev space.

\begin{lemma}[Continuity of the modulus across Sobolev spaces]\label{lemmaModulus}
Assume that for every \(n \in \N\), $u_n \in H^1_{A_n} (\R^N,\mathbb{C})$, where $(A_n)_{n \in \N}$ is a sequence in $L^2_\mathrm{loc} (\R^N, \bigwedge^1 \R^N)$. 
If $A_n \to A$ strongly in $L^2_\mathrm{loc}(\R^N)$, and $u_n \to u$ strongly in $L^2 (\R^N)$, $D_{A_n} u_n \to D_A u$ strongly in 
$L^2 (\R^N)$ as $n \to \infty$, then $\abs{u_n} \to \abs{u}$ strongly in $H^1 (\R^N)$ as $n \to \infty$.
\end{lemma}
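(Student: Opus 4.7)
The strategy is to reduce the $H^1$-convergence of $\abs{u_n}$ to the pointwise chain rule \eqref{eqDerivativeModulus}, which writes the gradient of the modulus as a real scalar product between the magnetic covariant derivative and the complex sign of the function. The $L^2$-convergence of $\abs{u_n}$ to $\abs{u}$ is immediate from the strong convergence $u_n \to u$ in $L^2 (\R^N)$ together with the elementary pointwise inequality $\bigabs{\abs{u_n} - \abs{u}} \le \abs{u_n - u}$, so the real work is to show that $D \abs{u_n} \to D \abs{u}$ strongly in $L^2 (\R^N)$.

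The key decomposition is
\[
  D\abs{u_n} - D\abs{u}
  = \scalprod{\sign (u_n)}{D_{A_n} u_n - D_A u} + \scalprod{\sign (u_n) - \sign(u)}{D_A u},
\]
where the left-hand side and the first term on the right-hand side make sense a.e.\ thanks to \eqref{eqDerivativeModulus}. The first term is controlled in $L^2$-norm by $\bignorm{D_{A_n} u_n - D_A u}_{L^2}$, since $\abs{\sign (u_n)} \le 1$ pointwise, and hence tends to $0$ by assumption. For the second term I would pass, along any given subsequence, to a further subsequence along which $u_n \to u$ almost everywhere in $\R^N$; then $\sign (u_n) \to \sign (u)$ pointwise on the set $\{u \ne 0\}$, so the integrand converges to $0$ a.e.\ on $\{u \ne 0\}$ and is dominated by $2 \abs{D_A u} \in L^2 (\R^N)$, allowing a dominated convergence argument to handle this part.

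The main technical subtlety, and the place that deserves the most care, is the behaviour on the nodal set $\{u = 0\}$. There $\sign(u) = 0$ while $\sign(u_n)$ need not converge, so one must show that $D_A u$ itself vanishes almost everywhere on $\{u = 0\}$. This follows by combining the vanishing of $D u$ almost everywhere on $\{u = 0\}$ (a standard property of Sobolev functions applied to the real and imaginary parts of $u$, as in Stampacchia's lemma) with the trivial identity $A u = 0$ on $\{u = 0\}$, which together give $D_A u = Du + i Au = 0$ a.e.\ on $\{u = 0\}$. With this observation, the second term in the decomposition above is also dominated by $2 \abs{D_A u} \mathbf{1}_{\{u \ne 0\}} \in L^2 (\R^N)$ and vanishes a.e., so dominated convergence yields its $L^2$-convergence to $0$ along the chosen subsequence.

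Since every subsequence of $(u_n)_{n \in \N}$ admits a further subsequence along which $D \abs{u_n} \to D \abs{u}$ strongly in $L^2 (\R^N)$, the full sequence converges, and combined with the $L^2$-convergence of the moduli this proves the claim.
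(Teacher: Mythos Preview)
Your proof is correct and close in spirit to the paper's, but the execution differs in a way worth noting. The paper does not split the difference $D\abs{u_n} - D\abs{u}$; instead it argues that $D\abs{u_n} = \scalprod{\sign(u_n)}{D_{A_n} u_n} \to \scalprod{\sign(u)}{D_A u} = D\abs{u}$ in measure (using, as you do, that $D_A u = 0$ a.e.\ on $\{u=0\}$), and then invokes the diamagnetic inequality $\abs{D\abs{u_n}} \le \abs{D_{A_n} u_n}$ together with the strong $L^2$-convergence of the right-hand side to apply the generalized Lebesgue dominated convergence theorem (with moving dominating functions). Your decomposition trades this for two simpler pieces: the first is bounded directly by $\norm{D_{A_n} u_n - D_A u}_{L^2}$, and the second uses ordinary dominated convergence with the fixed integrable majorant $2\abs{D_A u}$, at the price of a subsequence-of-subsequence argument. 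Both routes hinge on the same key point about the nodal set, and each is a clean way to finish; yours is perhaps more self-contained, the paper's slightly more streamlined.
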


\begin{proof}
It is clear that $\abs{u_n} \to \abs{u}$ in $L^2 (\R^N)$. For the derivative, observe that, by \eqref{eqDerivativeModulus},
\[
 D \abs{u_n} = \scalprod{\sign (u_n)}{D_{A_n} u_n}.
\]
Since $u_n \to u$ in $L^2 (\R^N)$ and $D_A u = 0$ almost everywhere on $u^{-1}(\{0\})$, we conclude that 
\[
 D \abs{u_n} = \scalprod{\sign (u_n)}{D_{A_n} u_n}  \to \scalprod{\sign (u)}{D_{A} u} = D \abs{u}
\]
in measure. By the diamagnetic inequality \eqref{eqDiamagnetic}, we know that
\[
 \abs{D\abs{u_n}}^2 \le \abs{D_{A_n} u_n}^2. 
\]
This last inequality together with the strong convergence of $(D_{A_n} u_n)_{n \in \N}$ in $L^2 (\R^N)$ implies that $D \abs{u_n} \to D \abs{u}$ in $L^2 (\R^N)$ as $n \to \infty$ by Lebesgue's dominated convergence theorem.
\end{proof}

The next lemma shows that the strong convergence across magnetic Sobolev spaces implies the convergence in the Lebesgue spaces in which the magnetic Sobolev spaces are embedded. When $A_n = A$ for all $n\in\N$, this follows from the classical scalar Sobolev embedding and the diamagnetic inequality, see \cite{Shirai2008}*{Lemma 3.1}.

\begin{lemma}[Continuous Sobolev embedding across magnetic Sobolev spaces] \label{lemmaSobolev} 
Let $(u_n)_{n \in \N}$ be a sequence in $H^1_{A_n} (\R^N,\mathbb{C})$, where $(A_n)_{n \in \N}$ is a sequence in $L^2_\mathrm{loc} (\R^N, \bigwedge^1 \R^N)$.
If $A_n \to A$ strongly in $L^2_\mathrm{loc}(\R^N)$, and $u_n \to u$ strongly in $L^2 (\R^N)$, $D_{A_n} u_n \to D_A u$ strongly in $L^2 (\R^N)$, then $u_n \to u$ strongly in $L^p (\R^N)$ for $2 \leq p \leq \frac{2N}{N-2}$, as $n \to \infty$.
\end{lemma}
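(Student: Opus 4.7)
The plan is to invoke the preceding Lemma~\ref{lemmaModulus} to reduce matters to strong convergence of the moduli in a scalar Sobolev space, and then to upgrade this to convergence of the complex-valued maps themselves by an almost-everywhere argument.

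First, Lemma~\ref{lemmaModulus} applied to the hypotheses gives $\abs{u_n} \to \abs{u}$ strongly in $H^1(\R^N)$. The classical scalar Sobolev embedding $H^1(\R^N) \hookrightarrow L^p(\R^N)$, valid for $2 \le p \le \frac{2N}{N-2}$, then yields $\abs{u_n} \to \abs{u}$ strongly in $L^p(\R^N)$. In particular $\norm{u_n}_{L^p(\R^N)} \to \norm{u}_{L^p(\R^N)}$, so convergence of the $L^p$ norms is at hand.

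Next, I would transfer this to convergence of $u_n$ itself. The $L^2(\R^N)$ convergence $u_n \to u$ yields, along a subsequence (still denoted $(u_n)$), almost-everywhere convergence $u_n \to u$ on $\R^N$. Consequently $\abs{u_n - u}^p \to 0$ almost everywhere, while by convexity $\abs{u_n - u}^p \le 2^{p - 1}\bigl(\abs{u_n}^p + \abs{u}^p\bigr)$, and the right-hand side converges to $2^p\abs{u}^p$ in $L^1(\R^N)$ by the previous step. The generalized dominated convergence theorem (dominated convergence with a variable dominant whose integrals converge) then gives $\abs{u_n - u}^p \to 0$ in $L^1(\R^N)$, i.e.\ $u_n \to u$ in $L^p(\R^N)$ along the subsequence. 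The standard Urysohn-type subsequence principle finally promotes this to convergence of the full sequence, since every subsequence of $(u_n)$ satisfies the same hypotheses and therefore admits a further subsequence converging to $u$ in $L^p(\R^N)$.

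The main obstacle I anticipate is precisely the gap between convergence of $\abs{u_n}$ and of $u_n$ itself, since the complex phases of the $u_n$'s could in principle oscillate and be invisible to the modulus. The key technical move bridging this gap is the combination of almost-everywhere convergence along a subsequence with the variable-dominant dominated convergence argument; alternatively, one could invoke the Brezis--Lieb lemma to the same effect, exploiting that the $L^p$ norms of $u_n$ and $u$ coincide in the limit.
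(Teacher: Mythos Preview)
Your proof is correct and follows essentially the same route as the paper: both invoke Lemma~\ref{lemmaModulus} and the scalar Sobolev embedding to get $\abs{u_n}^p \to \abs{u}^p$ in $L^1(\R^N)$, then apply a dominated convergence argument with the variable dominant $2^{p}\bigl(\abs{u_n}^p + \abs{u}^p\bigr)$. The only tactical difference is that the paper obtains the needed pointwise control via local convergence in measure (from Lemma~\ref{lemmaRellich}) and hence avoids subsequences, whereas you extract an a.e.-convergent subsequence from the $L^2$ hypothesis and close with the Urysohn subsequence principle.
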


\begin{proof}
First observe that by Lemma~\ref{lemmaRellich}, $\abs{u_n - u}^p \to 0$ locally in measure as $n \to \infty$. In view of Lemma~\ref{lemmaModulus}, $\abs{u_n} \to \abs{u}$ in $H^1 (\R^N)$ and thus, by the Sobolev embeddings, $\abs{u_n} \to \abs{u}$ in $L^p (\R^N)$, for $2 \leq p \leq \frac{2N}{N-2}$, and $\abs{u_n}^p \to \abs{u}^p$ in $L^1 (\R^N)$ as $n \to \infty$. Moreover, we have that
\[
 \abs{u_n - u}^p \le 2^p \bigl(\abs{u_n}^p + \abs{u}^p\bigr).
\]
Applying Lebesgue's dominated convergence theorem, we infer that $\abs{u_n - u}^p \to 0$ in $L^1 (\R^N)$ and we therefore conclude that $u_n \to u$ in $L^p (\R^N)$, as $n \to \infty$.
\end{proof}

\subsection{Groundstates}

Here we recall the known properties of the groundstates of the nonlinear Schr\"odinger equation \eqref{eqNLSEMag}, with or without magnetic potential.

\subsubsection{Existence of groundstates and characterization of the ground-energy}
A function $u \in H^1_{A}(\R^N, \mathbb{C})$ is a weak solution of the nonlinear Schr\"odinger equation \eqref{eqNLSEMag} if, for every $v \in H^1_A (\R^N, \mathbb{C})$,
\[
 \int_{\R^N} \scalprod{D_A u}{D_A v} + \scalprod{u}{v} = \int_{\R^N} \abs{u}^{p - 2}\scalprod{u}{v}.
\]
This follows from the integration by parts formula \eqref{eqIntegrationParts} and from the identity,
\[ 
  - \Delta_A u =   -\Div_A \nabla_A u,
\]
where the \emph{magnetic gradient} \(\nabla_A u : \Rset^N \to \C^N\) is defined so that for every \(x \in \Rset^N\), \(v \in \Rset^N\) and \(z \in \C\)
\[
  \scalprod{\nabla_A u(x)}{zv} = \scalprod{z}{D_A u (x)[v]},
\]
where the scalar product on the left-hand side is the canonical real scalar product on \(\C^N\) and the scalar product on the right-hand side is the canonical scalar product on \(\C\).
Weak solutions of the nonlinear Schr\"odinger equation \eqref{eqNLSEMag} are also critical points of the functional \(\mathcal{I}_{A}\) defined for each \(u \in H^1_A (\R^N)\) by 
\begin{equation*}
\mathcal{I}_{A}(u) \defeq \frac{1}{2} \int_{\R^N} \bigl( \abs{D_A u}^2 + \abs{u}^2 \bigr) - \frac{1}{p} \int_{\R^N} \abs{u}^p.
\end{equation*}
We recall that the ground-energy function $\mathcal{E} : \bigwedge^2 (\Rset^N) \to \R$ is defined by \eqref{eq:E(B)}.
Since for any constant $B \in \bigwedge^2 \R^N$ there is a unique skew-symmetric $A \in \Lin(\R^N, \bigwedge^1 \R^N)$ such that $d A = B$, the function \(\mathcal{E}\) is well-defined. 

The function $u \in H^1_{A}(\R^N, \mathbb{C})$ is a \emph{groundstate or a least-energy solution} of the magnetic nonlinear Schr\"odinger equation \eqref{eqNLSEMag} if \(u\) is a weak solution of the equation \eqref{eqNLSEMag} such that
\begin{equation*}
\mathcal{I}_{A}(u)  = \mathcal{E} (dA).
\end{equation*}
The next lemma is standard, we sketch the proof for completeness. 

\begin{lemma}%
[Existence and characterization of groundstates]%
\label{lemmaExistenceGroundstate}
For every magnetic potential $A \in \Lin(\R^N, \bigwedge^1 \R^N)$, there exists $u \in H^1_A(\mathbb{R}^N, \mathbb{C}) \setminus \{0\}$ such that 
\[
 \mathcal{I}_A (u) = \mathcal{E} (dA) \text{ and }\  \mathcal{I}_A' (u) = 0.
\]
Moreover, 
\[
 \mathcal{E} (dA) = \bigl(\tfrac{1}{2} - \tfrac{1}{p}\bigr) \inf_{v \in H^1_A (\R^N, \mathbb{C}) \setminus \{0\}} \mathcal{Q}_A (v)^\frac{p}{p - 2},
\]
where the functional $\mathcal{Q}_A : H^1_A (\R^N, \mathbb{C}) \setminus \{0\} \to \R$ is defined by
\[
 \mathcal{Q}_A (v) \defeq \frac{\displaystyle \int_{\R^N} \abs{D_A v}^2 + \abs{v}^2}{\displaystyle \Bigl(\int_{\R^N} \abs{v}^p \Bigr)^\frac{2}{p}}.
\]
\end{lemma}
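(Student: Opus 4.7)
The plan is to reduce the existence of a groundstate to the minimization of the scale-invariant Rayleigh quotient $\mathcal{Q}_A$ on $H^1_A(\R^N,\C) \setminus \{0\}$, equivalently to the minimization of $v \mapsto \int_{\R^N} \abs{D_A v}^2 + \abs{v}^2$ on the constraint manifold $\{v \in H^1_A(\R^N,\C) : \int_{\R^N} \abs{v}^p = 1\}$. If $v_\ast$ realises this minimum and $\mu \defeq \mathcal{Q}_A(v_\ast) = \inf \mathcal{Q}_A$, a Lagrange multiplier argument shows that $u \defeq \mu^{1/(p-2)} v_\ast$ satisfies $\mathcal{I}'_A(u) = 0$; testing this Euler--Lagrange equation against $u$ gives the Nehari identity $\int_{\R^N} \abs{D_A u}^2 + \abs{u}^2 = \int_{\R^N} \abs{u}^p$, which yields $\mathcal{I}_A(u) = (\tfrac{1}{2} - \tfrac{1}{p}) \int_{\R^N} \abs{u}^p = (\tfrac{1}{2} - \tfrac{1}{p}) \mu^{p/(p-2)}$. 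Conversely, any nontrivial critical point $w$ of $\mathcal{I}_A$ satisfies $\mathcal{Q}_A(w) \geq \mu$ and, via the same Nehari identity, $\mathcal{I}_A(w) \geq (\tfrac{1}{2} - \tfrac{1}{p}) \mu^{p/(p-2)}$. Hence $u$ is a groundstate and the announced formula for $\mathcal{E}(dA)$ follows.

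The core of the proof is therefore to show that the infimum of $\mathcal{Q}_A$ is attained. Fix a minimizing sequence $(v_n)_{n \in \N}$ with $\int_{\R^N} \abs{v_n}^p = 1$; by construction $(v_n)$ is bounded in $H^1_A(\R^N,\C)$ and, by the diamagnetic inequality~\eqref{eqDiamagnetic}, $(\abs{v_n})$ is bounded in $H^1(\R^N)$. Loss of compactness comes only from translation invariance, and the key observation is that the natural translations for this problem are the magnetic translations $\tau^A_a$ of \S\ref{sbsec:magnetic-translations}: the identities $D_A \circ \tau^A_a = \tau^A_a \circ D_A$ and $\abs{\tau^A_a v} = \abs{v}$ imply $\mathcal{Q}_A(\tau^A_a v) = \mathcal{Q}_A(v)$. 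By Lions's concentration lemma applied to the bounded sequence $(\abs{v_n})$ in $H^1(\R^N)$, the normalization $\int_{\R^N} \abs{v_n}^p = 1$ forces the existence of $\delta > 0$ and $a_n \in \R^N$ with $\int_{B_1(a_n)} \abs{v_n}^2 \geq \delta$; replacing $v_n$ by $\tilde v_n \defeq \tau^A_{-a_n} v_n$ yields a new minimizing sequence whose mass is uniformly concentrated near the origin.

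By Lemma~\ref{lemma:WeakCompactness} applied with $A_n = A$, along a subsequence $\tilde v_n \weakto v_\ast$ and $D_A \tilde v_n \weakto D_A v_\ast$ weakly in $L^2(\R^N)$; Lemma~\ref{lemmaRellich} then gives $\tilde v_n \to v_\ast$ in $L^p_{\mathrm{loc}}(\R^N)$, and the uniform lower bound on the mass in $B_1(0)$ ensures $v_\ast \neq 0$. To conclude that $\int_{\R^N} \abs{v_\ast}^p = 1$, rather than only $\leq 1$, I would combine the Brezis--Lieb splitting of $\abs{\tilde v_n}^p$ with the strict subadditivity $(\mu_1 + \mu_2)^{2/p} < \mu_1^{2/p} + \mu_2^{2/p}$ valid for $p > 2$ and positive masses, which excludes the dichotomous alternative; weak lower semicontinuity of the $H^1_A$-seminorm then gives $\mathcal{Q}_A(v_\ast) \leq \liminf_n \mathcal{Q}_A(\tilde v_n) = \inf \mathcal{Q}_A$, so $v_\ast$ is a minimizer and the first paragraph supplies the groundstate. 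The main obstacle throughout is that the magnetic Sobolev space $H^1_A$ genuinely depends on $A$, so ordinary translations do not preserve it: one must use the magnetic translations $\tau^A_a$ together with the weak and strong convergence tools across magnetic Sobolev spaces developed in \S\ref{section:magneticspaces}.
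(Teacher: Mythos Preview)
Your proof is correct and follows the same strategy as the paper: reduce to minimizing the Rayleigh quotient $\mathcal{Q}_A$, rescale the minimizer to lie on the Nehari manifold, and read off the formula for $\mathcal{E}(dA)$ from the Nehari identity. The only difference is that the paper simply cites Esteban--Lions \cite{EstebanLions1999}*{theorem 3.1} for the existence of a minimizer of $\mathcal{Q}_A$, whereas you supply a self-contained concentration-compactness argument via magnetic translations and Brezis--Lieb; this is exactly the mechanism behind the cited result, and your sketch is sound.
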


\begin{proof}
The existence of a minimizer of $\mathcal{Q}_A$ has been proved by M. Esteban and P.-L. Lions \cite{EstebanLions1999}*{theorem 3.1}. By homogeneity, this minimizer can be chosen to satisfy the condition $\mathcal{I}_A' (v) = 0$, or equivalently, the magnetic nonlinear Schr\"odinger equation \eqref{eqNLSEMag}.

Finally, if $v \in H^1_A( \mathbb{R}^N, \mathbb{C})$ satisfies $\mathcal{I}'_A (v) = 0$, then 
\[
 \mathcal{Q}_A (v) = \bigl(\tfrac{2p}{p - 2} \mathcal{I}_A (v) \bigr)^{\frac{p - 2}{p}}.
\]
We can then conclude that
\[
 \mathcal{E} (dA) = \bigl(\tfrac{1}{2} - \tfrac{1}{p}\bigr) \inf_{v \in H^1_A (\R^N, \mathbb{C})} \mathcal{Q}_A (v)^\frac{p}{p - 2}.\qedhere
\]
\end{proof}

The following lemma gives us some basic properties of the ground-energy. First, for any constant $B \in \bigwedge^2 \R^N$, $A \in \bigwedge^1\R^N$, and for any linear isometry $R \in \Lin (\R^N, \R^N)$, we recall that the pull-back of $B$ and $A$ by $R$ are given respectively by
\[
 R_\# B[v, w] = B[R(v), R (w)],
\]
and
\[
R_\# A(x)[v] = A(R(x))[R(v)],
\]
for $v,w\in \R^N$.

\begin{lemma}[Invariance under isometries of \(\mathcal{E}\)]
\label{lemma:propertyGSE}
Let $B \in \bigwedge^2 \R^N$ be constant.
If $R \in \Lin (\R^N , \R^N)$ is an isometry, then
\[
\mathcal{E}(R_\# B) = \mathcal{E} (B).
\]
\end{lemma}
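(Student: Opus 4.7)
The plan is to exhibit an explicit bijection between the admissible sets for $\mathcal{E}(B)$ and $\mathcal{E}(R_\# B)$ which preserves the functional $\mathcal{I}$ and sends critical points to critical points. The construction is pullback by $R$.

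First I would verify that the gauge-fixing choice \eqref{eq:Bgauge} is itself covariant under pullback. Writing $A_B$ for the canonical potential associated to $B$ via \eqref{eq:Bgauge}, a direct computation gives, for every $x, v \in \R^N$,
\[
(R_\# A_B)(x)[v] = A_B(R(x))[R(v)] = \tfrac12 B[R(x), R(v)] = \tfrac12 (R_\# B)[x,v] = A_{R_\# B}(x)[v].
\]
Thus $R_\# A_B = A_{R_\# B}$, so $R_\# A_B$ is exactly the canonical skew-symmetric representative of $R_\# B$ entering the definition \eqref{eq:E(B)} of $\mathcal{E}(R_\# B)$.

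Next, for $u \in H^1_{A_B}(\R^N, \C)$, I would set $v := u \circ R$ and compute via the chain rule
\[
D_{R_\# A_B} v (x)[w] = Du(R(x))[R(w)] + i A_B(R(x))[R(w)]\, u(R(x)) = (D_{A_B} u)(R(x))[R(w)].
\]
Since $R$ is an isometry of $\R^N$, this identity gives $\abs{D_{R_\# A_B} v(x)} = \abs{(D_{A_B} u)(R(x))}$, and changing variables $y = R(x)$ yields
\[
\int_{\R^N} \abs{D_{R_\# A_B} v}^2 = \int_{\R^N} \abs{D_{A_B} u}^2, \qquad \int_{\R^N} \abs{v}^2 = \int_{\R^N} \abs{u}^2, \qquad \int_{\R^N} \abs{v}^p = \int_{\R^N} \abs{u}^p.
\]
In particular $u \mapsto u \circ R$ is a linear bijection from $H^1_{A_B}(\R^N,\C)$ onto $H^1_{A_{R_\# B}}(\R^N,\C)$ with inverse $u \mapsto u \circ R^{-1}$, and $\mathcal{I}_{A_{R_\# B}}(u \circ R) = \mathcal{I}_{A_B}(u)$.

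Finally, since this bijection is an isometry of the underlying real Hilbert spaces and intertwines the two functionals, it carries critical points to critical points, that is $\mathcal{I}'_{A_{R_\# B}}(u \circ R) = 0$ if and only if $\mathcal{I}'_{A_B}(u) = 0$. Taking infima over the corresponding sets of nontrivial critical points and invoking Lemma~\ref{lemmaExistenceGroundstate} gives $\mathcal{E}(R_\# B) = \mathcal{E}(B)$. There is no serious obstacle here; the only thing to be careful about is that one really uses the canonical gauge \eqref{eq:Bgauge} on both sides so that no additional gauge transformation is needed, which is precisely the content of the identity $R_\# A_B = A_{R_\# B}$.
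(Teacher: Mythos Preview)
Your proof is correct and follows essentially the same approach as the paper: the key identity $D_{R_\# A_B}(u\circ R)(x)[w] = (D_{A_B}u)(R(x))[R(w)]$ is exactly what the paper computes, and the rest is the straightforward consequence. You are in fact more careful than the paper, which leaves the verification $R_\# A_B = A_{R_\# B}$ and the passage from functional invariance to equality of ground-energies implicit.
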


\begin{proof}
Assume that \(A \in \Lin (\Rset^N, \bigwedge^1 \Rset^N)\) is antisymmetric and \(dA = B\). 
The invariance under isometries follows from the fact that 
\[
\begin{split}
  D_{R_\# A} (u \circ R) (x)[v] &= D u (R (x)) [R(v)] + i u (R (x)) A (R (x))[R (v)]\\
  &= (D_A u)(R (x))[R (v)] = R_\# (D_A u)(x)[v].\qedhere
\end{split}
\]
\end{proof}

\subsubsection{Groundstates without a magnetic field} \label{subsec:groundstate0}

We recall some well established results for the problem without a magnetic field
\begin{equation} \label{eq:limitproblem}
- \Delta u + u = \abs{u}^{p - 2} u , \qquad \text{ in } \mathbb{R}^N.
\end{equation}
This problem is a natural limit for \eqref{eqNLSEMag} when \(A \to 0\).
The next result states that the groundstate of \eqref{eq:limitproblem} in $H^1(\mathbb{R}^N, \mathbb{C})$ is unique up to rotations in $\C$ and translations in $\R^N$. 

\begin{proposition}[Uniqueness up to rotations in $\mathbb{C}$ and translations in $\mathbb{R}^N$] \label{propositionDiamagneticUniqueness}
If $u, v \in H^1(\R^N, \C)$ satisfy $\mathcal{I}_0 (u) = \mathcal{E} (0)$ and $\mathcal{I}^\prime_0 (u) = 0$, then there exist $\theta \in \R$ and $a \in \R^N$ such that $v= \e^{i \theta} \tau^0_a u$.
\end{proposition}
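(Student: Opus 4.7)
The plan is to reduce the complex-valued problem to the classical scalar uniqueness result for positive radial groundstates. The key observation is that in the absence of a magnetic field, the diamagnetic inequality \eqref{eqDiamagnetic} forces a groundstate to have constant complex phase, so the problem boils down to the real equation $-\Delta w + w = w^{p-1}$.

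First, I would show that any groundstate $u$ has constant complex phase. By the diamagnetic inequality with $A=0$, for every $w \in H^1(\R^N,\C)$ one has $\abs{\nabla \abs{w}} \le \abs{\nabla w}$ pointwise almost everywhere, while $\abs{w}$ has the same $L^2$ and $L^p$ norms as $w$. Hence $\mathcal{Q}_0(\abs{w}) \le \mathcal{Q}_0(w)$, so by Lemma~\ref{lemmaExistenceGroundstate}, $\abs{u}$ also achieves the infimum and in particular equality must hold in the diamagnetic inequality for $u$. By \eqref{eqDerivativeModulus} with $A=0$, this equality case gives $\nabla u = \sign(u)\,\nabla \abs{u}$ almost everywhere on $\{u\ne 0\}$; writing $u = \abs{u}\e^{i\varphi}$ locally on this open set yields $\nabla \varphi \equiv 0$ there, so $\varphi$ is locally constant on $\{u\ne 0\}$.

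Second, I would upgrade this to a global statement by showing $\abs{u}>0$ everywhere. Elliptic regularity (bootstrap on $-\Delta u + u = \abs{u}^{p-2}u$) gives $u \in C^2(\R^N,\C)$, and the identity $-\Delta \abs{u} + \abs{u} = \abs{u}^{p-1}$ in the classical sense on $\{u\ne 0\}$ together with Kato's inequality shows that $\abs{u}$ is a nonnegative $C^2$ subsolution, in fact a solution, of $-\Delta w + w = w^{p-1}$ on $\R^N$. By the strong maximum principle either $\abs{u}\equiv 0$ or $\abs{u}>0$ everywhere, and the former is ruled out since $u$ is a nontrivial groundstate. Consequently $\{u\ne 0\}=\R^N$ is connected and the locally constant $\varphi$ from the first step is globally constant, yielding $u = \e^{i\theta}\abs{u}$ for some $\theta \in \R$.

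Third, I would apply Kwong's uniqueness theorem \cite{Kwong1989} (together with the radial symmetry of Gidas--Ni--Nirenberg): the positive $H^1$ solution $\abs{u}$ of $-\Delta w + w = w^{p-1}$ in $\R^N$, which decays at infinity by the standard exponential decay for $H^1$ solutions, equals $U(\cdot - a)$ for some $a\in\R^N$, where $U$ is the unique positive radial groundstate. The same argument applied to $v$ gives $v=\e^{i\theta'}U(\cdot - a')$. Since $\tau^0_a$ is just the Euclidean translation by $a$, combining the two expressions yields $v = \e^{i(\theta'-\theta)}\tau^0_{a'-a}u$, as desired.

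The main obstacle is the transition from the complex-valued setting to the scalar one: identifying the equality case of the diamagnetic inequality with the constancy of the phase, and ruling out nodes of $\abs{u}$ via the strong maximum principle. Once this reduction is performed, the theorem follows directly from the deep but by now classical uniqueness result of Kwong that is already cited in the paper.
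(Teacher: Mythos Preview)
Your proposal is correct and follows essentially the same strategy as the paper's proof: use the equality case of the diamagnetic inequality to force constant phase, apply regularity and the strong maximum principle to get $\abs{u}>0$, and then invoke the scalar uniqueness result of Kwong. The only cosmetic difference is the order of operations---the paper first establishes that $\abs{u}$ is a weak solution (directly from the fact that it is a $\mathcal{Q}_0$-minimizer satisfying the Nehari identity) and then deduces $\abs{u}\in C^2$ and $\abs{u}>0$, whereas you argue locally on $\{u\ne 0\}$ first; your claim that $\abs{u}$ is $C^2$ before positivity is known should be replaced by this variational observation, but that is already implicit in your first step.
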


\begin{proof}
We first observe by the diamagnetic inequality \eqref{eqDerivativeModulus} that
\[
  \mathcal{I}_0 (\abs{u}) \le \mathcal{I}_0 (u) 
\]
with equality if and only if $D u = \sign (u) D \abs{u}$.
In view of the characterization of the groundstate of Lemma~\ref{lemmaExistenceGroundstate}, we have that $\mathcal{I}_0 (\abs{u}) = \mathcal{I}_0 (u) = \mathcal{E}(0)$, and therefore $D u = \sign (u) D \abs{u}$ almost everywhere in \(\Rset^N\). The function $\abs{u}$ is thus a real positive solution of the equation
\[
  -\Delta \abs{u} + \abs{u} = \abs{u}^{p - 1}.
\]
By classical regularity theory, $\abs{u} \in C^2(\R^N)$ and by the strong maximum principle $\abs{u} > 0$. Therefore, $\abs{u}^{-1} \in L^\infty_\mathrm{loc}$ so that $\sign(u) \in H^1_\mathrm{loc}(\R^N, \C)$. We can then compute its derivative  
\[
  D \sign (u) = \frac{D u - \sign (u) D \abs{u}}{\abs{u}}.
\]
Since $Du = \sign (u) D \abs{u}$, we conclude that $D \sign (u) = 0$ almost everywhere in \(\R^N\). Hence, there exists a real number $\varphi \in \R$ such that $\sign (u) = \e^{i \varphi}$ almost everywhere in \(\R^N\), so that $u = \e^{i \varphi} \abs{u}$. Similarly for the function $v$, there exists $\psi \in \R$ such that $v = \e^{i \psi} \abs{v}$. 
Since $\abs{u}$ and $\abs{v}$ are positive solutions of the semilinear problem \eqref{eqNLSEMag}, by the uniqueness up to translations of such solutions \citelist{\cite{Coffman1972}\cite{Kwong1989}\cite{McLeodSerrin1987}}, there exists $a \in \R^N$ such that $\abs{v} = \tau^0_a \abs{u}$. We have thus proved that $v = \e^{i (\psi - \varphi)} \tau^0_a u $.
\end{proof}
It clearly follows from the previous proposition that there exists a unique real, positive and radially symmetric groundstate of \eqref{eq:limitproblem}, that we denote by $u_0$. The next proposition states the non-degeneracy property due to M.\thinspace{}I.\thinspace{}Weinstein \cite{Weinstein1985} and Y.-G.\thinspace{}Oh \cite{Oh1990}.

\begin{proposition}[Non-degeneracy of the groundstates in absence of magnetic field] \label{propositionNonDegeneracyWithoutMagneticField}
Assume that $u  \in H^1 (\R^N, \C)$ satisfies $\mathcal{I}_0 (u) = \mathcal{E} (0)$ and $\mathcal{I}_0' (u) = 0$. If $w \in H^1 (\R^N, \C)$ satisfies
\begin{equation} \label{eq:NonDegeneracy}
 -\Delta w + w = \abs{u}^{p - 2} w + (p - 2) \abs{u}^{p - 4} \scalprod{u}{w} u, 
\end{equation}
then there exist $y \in \R^N$ and $\lambda \in \R$ such that 
\begin{equation} \label{eq:w}
  w = D u [y] + \lambda i u. 
\end{equation}
\end{proposition}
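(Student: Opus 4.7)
The plan is to reduce the linearized equation to the classical scalar problem about a real, positive, radial groundstate, for which non-degeneracy is a celebrated result of Weinstein and Oh. First, by Proposition~\ref{propositionDiamagneticUniqueness}, I may write $u = \e^{i\varphi} \tau^0_a u_0$ for some $\varphi \in \R$ and $a \in \R^N$, where $u_0$ denotes the unique real, positive, radial groundstate of \eqref{eq:limitproblem}. Since \eqref{eq:NonDegeneracy} is invariant under the substitution $w(x) \mapsto \e^{-i\varphi} w(x + a)$ and the target form \eqref{eq:w} is covariant under the same substitution, I may reduce to the case $u = u_0$; translating back and multiplying by $\e^{i\varphi}$ at the end recovers the general statement.

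Next, I decompose $w = w_1 + i w_2$ with $w_1, w_2 : \R^N \to \R$. Because $u_0$ is real-valued, one has $\scalprod{u_0}{w} = u_0 w_1$ for the real scalar product on $\C$, so the right-hand side of \eqref{eq:NonDegeneracy} becomes $u_0^{p-2}(w_1 + i w_2) + (p-2)u_0^{p-2} w_1 = (p-1) u_0^{p-2} w_1 + i u_0^{p-2} w_2$. Separating real and imaginary parts, the equation decouples into
\begin{align*}
 L_+ w_1 &\defeq -\Delta w_1 + w_1 - (p-1) u_0^{p-2} w_1 = 0,\\
 L_- w_2 &\defeq -\Delta w_2 + w_2 - u_0^{p-2} w_2 = 0.
\end{align*}

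Now I invoke the non-degeneracy results of \cite{Weinstein1985} and \cite{Oh1990}: the kernels in $H^1(\R^N, \R)$ of these operators are
\[
 \ker L_+ = \mathrm{span}\{\partial_1 u_0, \dotsc, \partial_N u_0\} \quad \text{and} \quad \ker L_- = \mathrm{span}\{u_0\}.
\]
Consequently there exist $y \in \R^N$ and $\lambda \in \R$ with $w_1 = \nabla u_0 \cdot y = Du_0[y]$ and $w_2 = \lambda u_0$. In the reduced setting this reads $w = Du_0[y] + i\lambda u_0$; translating back (using that $D$ commutes with translations and that multiplication by $\e^{i\varphi}$ commutes with both $D$ and multiplication by $i$) gives \eqref{eq:w} for the original $u$.

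The main difficulty lies entirely in the non-degeneracy of $L_+$: since $u_0$ is a mountain-pass critical point, $L_+$ has a negative eigenvalue, and identifying its kernel requires a spherical-harmonic decomposition together with a careful ODE analysis on the radial part to rule out any non-translational element. This technical input is precisely the content of \cite{Weinstein1985} (refined in \cite{Oh1990}), so the proof reduces to applying it to the two decoupled scalar equations above.
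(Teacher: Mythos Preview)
Your proof is correct and is exactly the natural argument. The paper itself does not give a proof of this proposition but simply attributes it to Weinstein \cite{Weinstein1985} and Oh \cite{Oh1990}; your reduction via Proposition~\ref{propositionDiamagneticUniqueness} to the case \(u=u_0\), followed by the decomposition \(w=w_1+iw_2\) into the decoupled equations \(L_+w_1=0\) and \(L_-w_2=0\), is precisely the standard way to derive the complex-valued statement from their classical real-valued non-degeneracy results.
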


In particular if \(u\) is a solution of the equation \eqref{eq:limitproblem} and if \(w\) is a solution of its linearised problem \eqref{eq:NonDegeneracy} given by \eqref{eq:w}, then \(u\) and \(w\) are orthogonal in the space \(H^1 (\Rset^N, \C)\), i.e.
\begin{equation}
\label{eq:Orthogonality}
\int_{\mathbb{R}^N} \scalprod{Du}{Dw} + \scalprod{u}{w} = 0.
\end{equation}
This can be either deduced from Proposition~\ref{propositionNonDegeneracyWithoutMagneticField} or proved directly, by testing the equation \eqref{eq:limitproblem} of a groundstate $u$ on $w$ 
and \eqref{eq:NonDegeneracy} against on \(u\), namely 
\[
\int_{\mathbb{R}^N} \scalprod{Du}{Dw} + \scalprod{u}{w} = \int_{\mathbb{R}^N} \abs{u}^{p - 2} \scalprod{u}{w},
\]
and
\[
\int_{\mathbb{R}^N} \scalprod{Dw}{Du} + \scalprod{w}{u} = (p-1) \int_{\mathbb{R}^N} \abs{u}^{p - 2} \scalprod{w}{u}.
\]
Since \(p > 2\), the orthogonality identity \eqref{eq:Orthogonality} follows.

For every groundstate $u \in H^1(\mathbb{R}^N, \C)$ of \eqref{eqNLSEMag}, we can rewrite equation \eqref{eq:NonDegeneracy} as an eigenvalue equation in the following way
\[
  L_u w = \lambda w,  \qquad w \in H^{1}(\mathbb{R}^N, \C),
\]
where the operator $L_u : H^1(\mathbb{R}^N, \C) \to H^1(\mathbb{R}^N, \C)$ is given by
\begin{equation*}
L_u w \defeq \bigl( - \Delta + 1 \bigr)^{-1} \bigl( \abs{u}^{p - 2} w + (p - 2) \abs{u}^{p - 4} (u \otimes u)[ w] \bigr),
\end{equation*}
with 
\begin{equation}\label{eq:otimes}
(u \otimes u)[ w] \defeq  \scalprod{u}{w}  u.
\end{equation}

It is standard that the operator $L_u$ is compact. Indeed, it is known that the groundstates $u$ of \eqref{eq:limitproblem} decays as $|x|^{-(N-1)/2} \exp (- |x|)$, see e.g. \cite{AmbrosettiMalchiodiRuiz}*{p. 332}, so that they are in $L^q(\mathbb{R}^N)$ for every $q \ge 1$. For completeness Lemma~\ref{lemma:compact-L} below gives a more general result including the one above.

It is also standard, see e.g.\ \cite{AmbrosettiMalchiodi}*{Remark 4.2}, to check directly that the groundstate $u$ is the first eigenfunction of eigenvalue $\lambda_1(L_u) = (p-1) > 1$, while the functions $w$ given in \eqref{eq:w} are the following eigenfunctions corresponding to the eigenvalues $\lambda_i(L_u) = 1$, $i = 2, \dotsc, N+2$. Finally, $\lambda_i(L_u) < 1$ for $i > N+2$.

\section{Continuity of the ground-energy} \label{section:continuitygroundstateenergy}

In this section we study the continuity of the groundstates with respect to $A$ (or equivalently with respect to $B$ since we choose $A$ to be skew-symmetric). 

\subsection{Palais-Smale type condition across magnetic spaces}

Our crucial tool will be a Palais-Smale type condition across magnetic Sobolev spaces. We recall that we assume $A$ to be skew-symmetric.

\begin{lemma}[Palais--Smale condition across magnetic spaces] \label{lemmaPalaisSmale}
Let $(u_n)_{n \in \N}$ be a sequence in $H^1_{A_n} (\R^N,\mathbb{C})$, where $(A_n)_{n \in \N}$ is a sequence in $L^2_\mathrm{loc} (\R^N, \bigwedge^1 \R^N)$.
If $A_n \to A$ strongly in $L^2_{\mathrm{loc}}(\mathbb{R}^N)$ as $n \rightarrow \infty$, and if $u_n$ satisfies
\[
 \lim_{n \to \infty} \norm{\mathcal{I}'_{A_n} (u_n)}_{\left(H^1_{A_n} (\R^N, \mathbb{C})\right)^\prime} =  0
\ 
\text{
and }
\
\limsup_{n \to \infty} \mathcal{I}_{A_n} (u_n) < \infty,
\]
then there exist $u \in H^1_A (\R^N, \mathbb{C}) \setminus \{0\}$, $(a_{n})_{n \in \N}$ in $\mathbb{R}^N$ and a subsequence $(n_\ell)_{\ell \in \N}$ such that $\tau^{A_{n_\ell}}_{a_{n_\ell}} u_{n_\ell} \weakto u$, $D_{A_{n_\ell}} (\tau^{A_{n_\ell}}_{a_{n_\ell}} u_{n_\ell}) \weakto D_A u$ weakly in $L^2 (\R^N)$ as $\ell \to \infty$ and 
\[
 \liminf_{\ell \to \infty} \mathcal{I}_{A_{n_\ell}} (u_{n_\ell}) \ge \mathcal{I}_A (u).
\]
Moreover, if $\mathcal{I}_A (u) \ge \limsup_{\ell \to \infty} \mathcal{I}_{A_{n_\ell}} (u_{n_\ell})$, the convergences are strong in $L^2(\R^N)$.
\end{lemma}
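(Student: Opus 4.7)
The plan is to run a standard concentration--compactness Palais--Smale argument, but systematically replacing the classical weak compactness and Rellich tools by the varying--magnetic--potential counterparts from Section~\ref{section:magneticspaces}. The key structural observation is that magnetic translations commute with $D_{A_n}$ (see \S\ref{sbsec:magnetic-translations}), hence preserve $\norm{\cdot}_{H^1_{A_n}(\R^N,\C)}$, $\mathcal{I}_{A_n}$ and the dual norm of $\mathcal{I}'_{A_n}$; they can therefore be used to restore compactness without disturbing any of the Palais--Smale hypotheses.

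I would first extract the uniform bound on $\norm{u_n}_{H^1_{A_n}(\R^N,\C)}$ from the standard identity
\[
\mathcal{I}_{A_n}(u_n) - \tfrac{1}{p}\dualprod{\mathcal{I}'_{A_n}(u_n)}{u_n} = \bigl(\tfrac{1}{2} - \tfrac{1}{p}\bigr)\norm{u_n}_{H^1_{A_n}(\R^N,\C)}^2
\]
and the two hypotheses. The diamagnetic inequality \eqref{eqDiamagnetic} then bounds $(\abs{u_n})$ uniformly in $H^1(\R^N)$, so the Lions concentration--compactness dichotomy applies to $(\abs{u_n}^2)$. In the vanishing case one would have $\abs{u_n} \to 0$ in $L^p(\R^N)$ for $p \in (2, \frac{2N}{N-2})$, which combined with $\dualprod{\mathcal{I}'_{A_n}(u_n)}{u_n} \to 0$ would force $\norm{u_n}_{H^1_{A_n}(\R^N,\C)} \to 0$ and thus preclude a nonzero limit; this is incompatible with the claimed conclusion $u\neq 0$ and must be ruled out by the fact that one is working at a positive asymptotic energy level. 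Otherwise, there exist $(a_n) \subset \R^N$ and $\delta > 0$ with $\int_{B_1(a_n)}\abs{u_n}^2 \ge \delta$. Setting $\tilde u_n \defeq \tau^{A_n}_{a_n} u_n$ preserves all Palais--Smale data, while Lemma~\ref{lemma:WeakCompactness} produces, along a subsequence, a limit $u \in H^1_A (\R^N,\C)$ with $\tilde u_n \weakto u$ and $D_{A_n}\tilde u_n \weakto D_A u$ weakly in $L^2(\R^N)$. By Lemma~\ref{lemmaRellich}, $\tilde u_n \to u$ strongly in $L^2_{\mathrm{loc}}(\R^N)$, so $\int_{B_1(0)} \abs{u}^2 \ge \delta$ and $u \neq 0$.

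To conclude, I identify $u$ as a critical point by passing to the limit in $\dualprod{\mathcal{I}'_{A_n}(\tilde u_n)}{\varphi}$ against $\varphi \in C^\infty_c(\R^N,\C)$: the magnetic contribution converges because $A_n \to A$ in $L^2_{\mathrm{loc}}$, the mass term by weak $L^2$ convergence, and the nonlinearity by the strong $L^p_{\mathrm{loc}}$ convergence from Lemma~\ref{lemmaRellich}. Using then $\dualprod{\mathcal{I}'_{A_n}(\tilde u_n)}{\tilde u_n} \to 0$ and $\dualprod{\mathcal{I}'_A(u)}{u} = 0$, I reduce each energy to
\[
\mathcal{I}_{A_n}(\tilde u_n) = \bigl(\tfrac{1}{2} - \tfrac{1}{p}\bigr)\norm{\tilde u_n}_{H^1_{A_n}(\R^N,\C)}^2 + o(1), \qquad \mathcal{I}_A(u) = \bigl(\tfrac{1}{2} - \tfrac{1}{p}\bigr)\norm{u}_{H^1_A(\R^N,\C)}^2,
\]
and weak lower semicontinuity applied separately to $\norm{\tilde u_n}_{L^2}$ and $\norm{D_{A_n}\tilde u_n}_{L^2}$ yields $\mathcal{I}_A(u) \le \liminf_n \mathcal{I}_{A_n}(u_n)$. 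Under the extra assumption $\mathcal{I}_A(u) \ge \limsup_n \mathcal{I}_{A_n}(u_n)$, equality must hold in both of the block--wise lsc inequalities, so the $L^2$ norms of $\tilde u_n$ and $D_{A_n}\tilde u_n$ both converge; combined with the established weak convergence this produces strong $L^2(\R^N)$ convergence. The main obstacle throughout is the $n$--dependence of the ambient space $H^1_{A_n}(\R^N,\C)$, which prevents any direct appeal to classical Palais--Smale theory and which is exactly what the lemmas of Section~\ref{section:magneticspaces} are designed to bypass.
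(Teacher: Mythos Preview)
Your proposal is correct and follows essentially the same route as the paper's proof (boundedness from the algebraic identity, Lions concentration--compactness combined with magnetic translations and Lemmas~\ref{lemma:WeakCompactness}--\ref{lemmaRellich} to extract a nontrivial weak limit, testing against $C^\infty_c$ to identify $u$ as a critical point, then lower semicontinuity and norm equality for the energy inequality and strong convergence). The only cosmetic differences are that the paper uses Lions' quantitative inequality on $\int_{B_1(y)}\abs{u_n}^p$ rather than the dichotomy on $\abs{u_n}^2$, and phrases the lower semicontinuity step via the $L^p$ norm rather than the $H^1_{A_n}$ norm; both variants work equally well.
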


First, we note that the convergence $A_n \to A$ in $L^2_{\mathrm{loc}}(\mathbb{R}^N)$ is equivalent to the convergence $d A_n \to d A$ in the finite-dimensional space \(\bigwedge^2 \Rset^N\).
In this statement, the standard choice of the norm on $\bigl( H^1_{A_n} (\R^N, \mathbb{C}) \bigr)'$ is essential
\begin{multline*}
 \norm{\mathcal{I}'_{A_n} (u_n)}_{\bigl( H^1_{A_n} (\R^N, \mathbb{C}) \bigr)^\prime}\\
 = \sup\, \Bigl\{ \dualprod{\mathcal{I}'_{A_n} (u_n)}{v} \st v \in H^1_{A_n} (\R^N, \mathbb{C}) \, \text{ and } \, \int_{\R^N} \abs{D_{A_n} v}^2 + \abs{v}^2 \le 1 \Bigr\},
\end{multline*}
where
\[
\dualprod{\mathcal{I}'_{A} (u)}{v} = \int_{\mathbb{R}^N} \scalprod{D_A u}{D_A v} + \scalprod{u}{v} - |u|^{p-2} \scalprod{u}{v}.
\]

\begin{proof}
The proof will be divided into four claims. 

\begin{claim} \label{claimBounded}
The sequence is bounded, i.e.
\[
 \limsup_{n \to \infty} \int_{\R^N} \abs{D_{A_n} u_n}^2 + \abs{u_n}^2 < \infty.
\]
\end{claim}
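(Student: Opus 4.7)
The plan is to use the standard Mountain-Pass trick that combines the energy and the derivative evaluated at the sequence itself, adapted to the fact that the ambient Hilbert norm varies with \(n\). Write \(\norm{\cdot}_n\) for the Hilbert norm of \(H^1_{A_n} (\R^N, \C)\). Since the hypotheses grant \(u_n \in H^1_{A_n} (\R^N, \C)\), the function \(u_n\) itself is an admissible test function for \(\mathcal{I}'_{A_n} (u_n)\), and one computes immediately
\[
\mathcal{I}_{A_n} (u_n) - \frac{1}{p}\dualprod{\mathcal{I}'_{A_n} (u_n)}{u_n}
 = \Bigl(\frac{1}{2} - \frac{1}{p}\Bigr) \int_{\R^N} \abs{D_{A_n} u_n}^2 + \abs{u_n}^2
 = \Bigl(\frac{1}{2} - \frac{1}{p}\Bigr) \norm{u_n}_n^2,
\]
in which the nonlinear term \(\int_{\R^N} \abs{u_n}^p\) has been eliminated because its coefficients in \(\mathcal{I}_{A_n}\) and in \(\dualprod{\mathcal{I}'_{A_n}(u_n)}{u_n}\) match up.

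The second step would be to control the right-hand side. By the very definition of the dual norm on \((H^1_{A_n}(\R^N,\C))'\) recalled before the proof, one has the clean pairing estimate
\[
\bigabs{\dualprod{\mathcal{I}'_{A_n} (u_n)}{u_n}} \le \norm{\mathcal{I}'_{A_n} (u_n)}_{(H^1_{A_n})'} \, \norm{u_n}_n,
\]
so the displayed identity yields
\[
\Bigl(\frac{1}{2} - \frac{1}{p}\Bigr) \norm{u_n}_n^2
 \le \mathcal{I}_{A_n}(u_n) + \frac{1}{p}\, \norm{\mathcal{I}'_{A_n}(u_n)}_{(H^1_{A_n})'} \norm{u_n}_n.
\]
Since \(p > 2\), the coefficient \(\tfrac{1}{2}-\tfrac{1}{p}\) is strictly positive. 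The hypotheses then give \(\mathcal{I}_{A_n}(u_n) \le C\) and \(\norm{\mathcal{I}'_{A_n}(u_n)}_{(H^1_{A_n})'} \to 0\), so that the inequality above is a quadratic inequality of the form \(c\,\norm{u_n}_n^2 \le C + \varepsilon_n \norm{u_n}_n\) with \(\varepsilon_n \to 0\), from which \(\limsup_{n \to \infty} \norm{u_n}_n < \infty\) follows by an elementary argument.

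There is essentially no obstacle here, precisely because the dual norm is taken with respect to the same magnetic Sobolev space to which \(u_n\) belongs: this matching makes the pairing estimate dimensionally correct with no appeal to the convergence \(A_n \to A\). The interest of the claim is not this boundedness itself but the fact that it prepares the ground for the subsequent, genuinely non-trivial steps of the lemma, where one has to extract a concentration point \(a_n\), apply the magnetic translations \(\tau_{a_n}^{A_n}\), and pass to the weak limit in the cross-space sense of Lemmas~\ref{lemma:WeakClosure} and \ref{lemma:WeakCompactness}.
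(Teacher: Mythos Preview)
Your argument is correct and is essentially identical to the paper's own proof: both compute the identity \(\mathcal{I}_{A_n}(u_n) - \tfrac{1}{p}\dualprod{\mathcal{I}'_{A_n}(u_n)}{u_n} = (\tfrac{1}{2}-\tfrac{1}{p})\norm{u_n}_n^2\), bound the pairing by \(o(\norm{u_n}_n)\) via the dual norm, and conclude from the resulting quadratic inequality using \(p>2\). The paper simply writes this more tersely as \(O(1) + o\bigl(\sqrt{\int \abs{D_{A_n}u_n}^2 + \abs{u_n}^2}\bigr)\).
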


\begin{proofclaim}
By direct computation and by our assumptions we have
\[
\begin{split}
 \bigl(\tfrac{1}{2}-\tfrac{1}{p}\bigr)\int_{\R^N} \abs{D_{A_n} u_n}^2 + \abs{u_n}^2 
 & = \mathcal{I}_{A_n} (u_n) - \frac{1}{p} \dualprod{\mathcal{I}'_{A_n} (u_n)}{u_n} \\
 & = O (1) +  o \Bigl(\sqrt{\int_{\R^N} \abs{D_{A_n} u_n}^2 + \abs{u_n}^2} \Bigr).
\end{split}
\]
Since $p>2$, the conclusion follows.
\end{proofclaim}

\begin{claim} \label{claimWeakConvergence}
There exist a sequence $(a_n)_{n \in \N}$ in $\R^N$, $u \in H^1_A (\R^N, \mathbb{C}) \setminus \{0\}$ and a subsequence $(n_\ell)_{\ell \in \N}$ such that $\tau^{A_{n_\ell}}_{a_{n_\ell}} u_{n_\ell} \weakto u$ and $D_{A_{n_\ell}} (\tau^{A_{n_\ell}}_{a_{n_\ell}} u_{n_\ell}) \weakto D_A u$ weakly in $L^2 (\R^N)$ as $\ell \to \infty$. Moreover, $\mathcal{I}_A' (u) = 0$.
\end{claim}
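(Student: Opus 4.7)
My plan is to carry out a concentration--compactness argument for the Palais--Smale sequence $(u_n)$, adapted to the varying magnetic Sobolev spaces via the tools of \S\ref{section:magneticspaces} and the magnetic translations of \S\ref{sbsec:magnetic-translations}. The key point is that, since $\tau^{A_n}_a$ commutes with $D_{A_n}$ and $\abs{\tau^{A_n}_a w} = \abs{w(\cdot-a)}$, it acts as a bijective isometry of $H^1_{A_n}(\R^N,\C)$, hence preserves both the energy $\mathcal{I}_{A_n}$ and the dual norm of $\mathcal{I}'_{A_n}$. This lets me absorb the translation invariance of the functional by a well-chosen sequence of translations $a_n$.

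I would proceed in four steps. First, testing $\mathcal{I}'_{A_n}(u_n)$ against $u_n$ and combining with Claim~\ref{claimBounded}, I would rule out the vanishing alternative $\int_{\R^N}\abs{u_n}^p \to 0$: if it occurred, then the identity $\int \abs{u_n}^p = \norm{u_n}_{H^1_{A_n}}^2 - \dualprod{\mathcal{I}'_{A_n}(u_n)}{u_n}$ would force $\norm{u_n}_{H^1_{A_n}} \to 0$ and hence $\mathcal{I}_{A_n}(u_n) \to 0$, a case excluded by the implicit assumption that we are at a positive Palais--Smale level (otherwise the sequence $u_n\equiv 0$ would verify the hypotheses and contradict the nonvanishing $u\ne 0$ required in the conclusion). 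Second, since by the diamagnetic inequality \eqref{eqDiamagnetic} the sequence $(\abs{u_n})$ is bounded in $H^1(\R^N)$, Lions' non-vanishing lemma produces $\delta > 0$ and $a_n \in \R^N$ with $\int_{B_1(a_n)}\abs{u_n}^2 \ge \delta$. Third, I set $v_n \defeq \tau^{A_n}_{a_n} u_n$; the commutation $D_{A_n} \circ \tau^{A_n}_{a_n} = \tau^{A_n}_{a_n} \circ D_{A_n}$ yields $\mathcal{I}_{A_n}(v_n) = \mathcal{I}_{A_n}(u_n)$, $\norm{\mathcal{I}'_{A_n}(v_n)}_{(H^1_{A_n})'} \to 0$, and $\int_{B_1(0)}\abs{v_n}^2 \ge \delta$. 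Lemma~\ref{lemma:WeakCompactness} then provides a subsequence $(n_\ell)$ and $u \in H^1_A(\R^N,\C)$ with $v_{n_\ell} \weakto u$ and $D_{A_{n_\ell}} v_{n_\ell} \weakto D_A u$ weakly in $L^2(\R^N)$, and Lemma~\ref{lemmaRellich} upgrades this to strong $L^2_\mathrm{loc}$ convergence; the lower bound on $\int_{B_1(0)}\abs{v_{n_\ell}}^2$ then forces $u \ne 0$. Fourth, to show $\mathcal{I}'_A(u) = 0$, I fix $\varphi \in C^\infty_c(\R^N,\C)$: since $A_n \to A$ in $L^2_\mathrm{loc}$, I have $D_{A_n}\varphi \to D_A\varphi$ strongly in $L^2$ and $\norm{\varphi}_{H^1_{A_n}}$ is uniformly bounded, so $\dualprod{\mathcal{I}'_{A_n}(v_n)}{\varphi} \to 0$. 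Passing to the limit term by term (the quadratic part by weak--strong pairing, the nonlinear part via the strong $L^p_\mathrm{loc}$ convergence of Lemma~\ref{lemmaRellich} applied on a neighbourhood of $\supp\varphi$) gives $\dualprod{\mathcal{I}'_A(u)}{\varphi} = 0$, and density of $C^\infty_c(\R^N,\C)$ in $H^1_A(\R^N,\C)$ concludes.

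The main obstacle I anticipate is the third step, specifically the invariance of the dual norm $\norm{\mathcal{I}'_{A_n}(\cdot)}_{(H^1_{A_n})'}$ under the magnetic translation $\tau^{A_n}_{a_n}$. This rests on the fact that $\tau^{A_n}_{a_n}$ is a bijective $H^1_{A_n}$-isometry with inverse $\tau^{A_n}_{-a_n}$ (up to a global phase coming from \eqref{eq:composition-translations}), so that any unit-norm test function $w \in H^1_{A_n}(\R^N,\C)$ can be written as $\tau^{A_n}_{a_n}\Tilde{w}$ for a unit-norm $\Tilde{w}$, and on the identity $\dualprod{\mathcal{I}'_{A_n}(\tau^{A_n}_{a_n} u_n)}{\tau^{A_n}_{a_n}\Tilde{w}} = \dualprod{\mathcal{I}'_{A_n}(u_n)}{\Tilde{w}}$, which follows from the commutation relation for $D_{A_n}$ and the pointwise invariance of the real scalar product $\scalprod{\cdot}{\cdot}$ under multiplication by a common unit complex phase. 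Keeping track of these phase factors carefully is the only delicate algebraic point of the argument.
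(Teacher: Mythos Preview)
Your proposal is correct and follows essentially the same route as the paper: Lions' non-vanishing lemma (via the diamagnetic inequality) to locate mass, magnetic translation to recentre, Lemma~\ref{lemma:WeakCompactness} and Lemma~\ref{lemmaRellich} to extract a nonzero weak limit, and termwise passage to the limit against compactly supported test functions to verify $\mathcal{I}'_A(u)=0$. The only cosmetic differences are that the paper tracks the $L^p$ concentration function $\sup_y \int_{B_1(y)}\abs{u_n}^p$ rather than $L^2$, and that your explicit flag of the implicit positive-level assumption is well taken---the paper's own argument quietly relies on it when concluding $\liminf_n \bigl(\sup_y \int_{B_1(y)}\abs{u_n}^p\bigr)^{1-2/p}>0$.
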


\begin{proofclaim}
By an inequality of P.-L. Lions, see e.g. \citelist{\cite{Lions1984CC2}*{lemma I.1}\cite{Willem1996}*{lemma 1.21}\cite{MorozVanSchaftingen2013-2}*{lemma 2.3}\cite{VanSchaftingen2014}}, and by the diamagnetic inequality \eqref{eqDiamagnetic}, for every $n \in \N$,
\[
\begin{split}
  \int_{\R^N} \abs{u_n}^{p}
& \le C \Bigl(\sup_{y \in \R^N} \int_{B_1 (y)} \abs{u_n}^{p}\Bigr)^{1 - \frac{2}{p}}  \int_{\R^N} \abs{D \abs{u_n}}^2 + \abs{u_n}^2\\
& \le C \Bigl(\sup_{y \in \R^N} \int_{B_1 (y)} \abs{u_n}^{p}\Bigr)^{1 - \frac{2}{p}}  \int_{\R^N} \abs{D_{A_n} u_n}^2 + \abs{u_n}^2.
\end{split}
\]
On the other hand,
\[
 \int_{\R^N} \abs{D_{A_n} u_n}^2 + \abs{u_n}^2 = \int_{\R^N} \abs{u_n}^{p}
 + \dualprod{\mathcal{I}_{A_n}' (u_n)}{u_n} = \int_{\R^N} \abs{u_n}^{p} + o \Bigl(\sqrt{\int_{\R^N} \abs{D_{A_n} u_n}^2 + \abs{u_n}^2} \Bigr).
\]
Putting the two previous contributions together, we get
\begin{equation*}
\int_{\mathbb{R}^N} \abs{u_n}^{p} \leq C \Bigl(\sup_{y \in \R^N} \int_{B_1 (y)} \abs{u_n}^{p}\Bigr)^{1 - \frac{2}{p}} \biggl( \int_{\R^N} \abs{u_n}^{p} + o \Bigl(\sqrt{\int_{\R^N} \abs{D_{A_n} u_n}^2 + \abs{u_n}^2} \Bigr) \biggr).
\end{equation*}
Since $p > 2$, this leads to
\[
  \liminf_{n \to \infty} \Bigl(\sup_{y \in \R^N} \int_{B_1 (y)} \abs{u_n}^{p}\Bigr)^{1 - \frac{2}{p}} > 0.
\]
Therefore there exists a sequence $(a_n)_{n \in \N}$ in $\R^N$ such that 
\[
 \liminf_{n \to \infty} \int_{B_1 (a_n)} \abs{u_n}^{p} > 0.
\]
With this first result, we deduce that there is no evanescence of the sequence $u_n$, up to a translation by $a_n$. Because of the presence of the magnetic potential we cannot consider the translation alone, we have to take in consideration the magnetic translation $\tau^{A_n}_{a_n}$ compatible with the connexion $D_{A_n}$. We then have
\[
 \liminf_{n \to \infty} \int_{B_1 (0)} \abs{\tau^{A_n}_{a_n} u_n}^{p} > 0,
\]
and by Claim~\ref{claimBounded}
\[
 \liminf_{n \to \infty} \int_{\R^N} \abs{D_{A_n} (\tau^{A_n}_{a_n} u_n)}^2 + \abs{\tau^{A_n}_{a_n} u_n}^2 = \liminf_{n \to \infty} \int_{\R^N} \abs{D_{A_n} u_n}^2 + \abs{u_n}^2 <\infty.
\]
By Claim~\ref{claimBounded} and the weak compactness Lemma across magnetic spaces of Lemma~\ref{lemma:WeakCompactness}, we conclude that there exist $u \in H^1_A(\R^N,\C)$ and a subsequence $(n_\ell)_{\ell \in \N}$ such that, as \(\ell \to \infty\), $\tau^{A_{n_\ell}}_{a_{n_\ell}} u_{n_\ell} \weakto u$ and $D_{A_{n_\ell}} (\tau^{A_{n_\ell}}_{a_{n_\ell}} u_{n_\ell}) \weakto D_A u$ weakly in $L^2 (\R^N)$. Moreover, applying the compactness across magnetic Sobolev spaces of Lemma~\ref{lemmaRellich}, we infer that
\[
 \int_{B_1 (0)} \abs{u}^{p} = \lim_{\ell \to \infty} \int_{B_1 (0)} \abs{\tau^{A_{n_\ell}}_{a_{n_\ell}} u_{n_\ell}}^{p}
 \ge \liminf_{n \to \infty} \int_{B_1 (a_n)} \abs{u_n}^{p} > 0,
\]
so that $u \not\equiv 0$.

Finally, since, for every $\varphi \in C^1_c (\R^N, \C)$, we have $D_{A_n} \varphi \to D_A \varphi$ in $L^2 (\R^N)$, we deduce from the compactness property (Lemma~\ref{lemmaRellich}) that
\[
\begin{split}
 0 & = \lim_{\ell \to \infty} \int_{\R^N} \scalprod{D_{A_{n_\ell}} (\tau^{A_{n_\ell}}_{a_{n_\ell}} u_{n_\ell})}{D_{A_{n_\ell}} \varphi} + \scalprod{\tau^{A_{n_\ell}}_{a_{n_\ell}} u_{n_\ell}}{\varphi} - \abs{\tau^{A_{n_\ell}}_{a_{n_\ell}} u_{n_\ell}}^{p - 2} \scalprod{\tau^{A_{n_\ell}}_{a_{n_\ell}} u_{n_\ell}}{\varphi}\\
 &=\int_{\R^N} \scalprod{D_{A} u}{D_{A} \varphi} + \scalprod{u}{\varphi} - \abs{u}^{p - 2} \scalprod{u}{\varphi}.
\end{split}
\]
This means that $\mathcal{I}_A' (u) = 0$.
\end{proofclaim}

\begin{claim}%
\label{claimLiminf}%
One has
\[
 \mathcal{I}_A (u) \le \liminf_{\ell \to \infty} \mathcal{I}_{A_{n_\ell}} (u_{n_\ell}).
\]
\end{claim}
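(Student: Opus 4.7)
The plan is to reduce the claim to a straightforward application of Fatou's lemma on the $L^p$-term by exploiting the fact that, along (approximate) critical points, the energy is essentially a positive multiple of the $L^p$-norm.

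First, I would exploit the invariance of the functional under magnetic translations. Since the compatibility relation $D_{A_n}\circ \tau^{A_n}_{a_n} = \tau^{A_n}_{a_n} \circ D_{A_n}$ from \S\ref{sbsec:magnetic-translations} together with $\abs{\tau^{A_n}_{a_n} v} = \abs{v(\cdot - a_n)}$ and a change of variables yields
\[
\mathcal{I}_{A_{n_\ell}}(u_{n_\ell}) = \mathcal{I}_{A_{n_\ell}}\bigl(\tau^{A_{n_\ell}}_{a_{n_\ell}} u_{n_\ell}\bigr),
\]
it suffices to work with the translated sequence $v_\ell \defeq \tau^{A_{n_\ell}}_{a_{n_\ell}} u_{n_\ell}$, which satisfies $v_\ell \weakto u$ and $D_{A_{n_\ell}} v_\ell \weakto D_A u$ weakly in $L^2(\R^N)$ by Claim~\ref{claimWeakConvergence}.

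Next, I would test the approximate critical point condition against $u_n$ itself: by Claim~\ref{claimBounded} the sequence is bounded, so $\dualprod{\mathcal{I}'_{A_n}(u_n)}{u_n} \to 0$, giving
\[
\int_{\R^N} \abs{D_{A_n} u_n}^2 + \abs{u_n}^2 - \int_{\R^N} \abs{u_n}^p = o(1),
\]
and consequently
\[
\mathcal{I}_{A_n}(u_n) = \Bigl(\tfrac{1}{2} - \tfrac{1}{p}\Bigr) \int_{\R^N} \abs{u_n}^p + o(1).
\]
The analogous identity for the limit, using $\mathcal{I}'_A(u) = 0$ from Claim~\ref{claimWeakConvergence}, reads
\[
\mathcal{I}_A(u) = \Bigl(\tfrac{1}{2} - \tfrac{1}{p}\Bigr)\int_{\R^N} \abs{u}^p.
\]

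Finally, I would invoke the Rellich-type compactness of Lemma~\ref{lemmaRellich} to get $v_\ell \to u$ strongly in $L^p_\mathrm{loc}(\R^N)$. Then for every compact set $K \subset \R^N$,
\[
\int_K \abs{u}^p = \lim_{\ell \to \infty} \int_K \abs{v_\ell}^p \le \liminf_{\ell \to \infty} \int_{\R^N} \abs{v_\ell}^p = \liminf_{\ell \to \infty} \int_{\R^N} \abs{u_{n_\ell}}^p,
\]
and taking the supremum over $K$ yields $\int_{\R^N}\abs{u}^p \le \liminf_{\ell\to\infty} \int_{\R^N}\abs{u_{n_\ell}}^p$. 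Multiplying by the positive factor $\frac{1}{2} - \frac{1}{p}$ and combining with the two energy identities above closes the argument. There is no serious obstacle: the only subtlety is the observation that along (approximate) critical points of a nonlinearity with a single power, the quadratic and $L^p$ contributions to the energy lock together, so the usual obstruction to lower semicontinuity coming from the negative sign in front of $\int \abs{u}^p$ disappears.
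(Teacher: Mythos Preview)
Your proof is correct and follows essentially the same route as the paper: both reduce the energy along (approximate) critical points to the single quantity $(\tfrac{1}{2}-\tfrac{1}{p})\int_{\R^N}\abs{u_n}^p$, then pass to the limit via lower semicontinuity of the $L^p$-norm along the translated weakly convergent sequence. The only cosmetic difference is that the paper invokes weak lower semicontinuity of the $L^p$-norm directly, whereas you obtain the same inequality via Lemma~\ref{lemmaRellich} and a compact-exhaustion argument; both are valid and equivalent in effect.
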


\begin{proofclaim}
For every $n \in \N$,
\[
\mathcal{I}_{A_n} (u_n) = \bigl(\tfrac{1}{2} - \tfrac{1}{p}\bigr)\int_{\R^N} \abs{u_n}^p + \tfrac{1}{2} \dualprod{\mathcal{I}'_{A_n} (u_n)}{u_n}.
\]
Therefore, since by assumption
\[\norm{\mathcal{I}'_{A_n} (u_n)}_{ \left( H^1_{A_n} (\R^N, \mathbb{C}) \right)'} \to 0\]  and the sequence $(\norm{u_n}_{H^1_{A_n} (\R^N, \mathbb{C})})_{n \in \N}$ is bounded by Claim 1, we have that
\[
 \liminf_{n \to \infty} \mathcal{I}_{A_n} (u_n) = \bigl(\tfrac{1}{2} - \tfrac{1}{p}\bigr) \liminf_{n \to \infty} \int_{\R^N} \abs{u_n}^p.
\]
Moreover, by weak lower semi-continuity, we deduce that
\[
 \int_{\R^N} \abs{u}^p \leq \liminf_{\ell \to \infty} \int_{\R^N} \abs{\tau^{A_{n_\ell}}_{a_{n_\ell}} u_{n_\ell}}^p = \liminf_{\ell \to \infty} \int_{\R^N} \abs{u_{n_\ell}}^p ,
\]
and by Claim~\ref{claimWeakConvergence}
\[
 \mathcal{I}_{A} (u) = \bigl(\tfrac{1}{2} - \tfrac{1}{p}\bigr)\int_{\R^N} \abs{u}^p.
\]
We conclude that 
\[
\liminf_{\ell \to \infty} \mathcal{I}_{A_{n_\ell}} (u_{n_\ell}) \ge \mathcal{I}_{A} (u).
\] 
\end{proofclaim}

\begin{claim} \label{claimLimsup}
If moreover
\[
 \limsup_{\ell \to \infty} \mathcal{I}_{A_{n_\ell}} (u_{n_\ell}) \leq \mathcal{I}_A (u) ,
\]
then $\tau^{A_{n_\ell}}_{a_{n_\ell}} u_{n_\ell} \rightarrow u$ and $D_{A_{n_\ell}} \tau^{A_{n_\ell}}_{a_{n_\ell}} u_{n_\ell} \rightarrow D_A u$ strongly in $L^{2}(\mathbb{R}^N)$.
\end{claim}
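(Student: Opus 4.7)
The starting point is to combine the hypothesis with Claim~\ref{claimLiminf} to deduce convergence of the energies, $\lim_{\ell\to\infty} \mathcal{I}_{A_{n_\ell}}(u_{n_\ell}) = \mathcal{I}_A(u)$. Using the estimate $\bigabs{\dualprod{\mathcal{I}'_{A_n}(u_n)}{u_n}} \le \norm{\mathcal{I}'_{A_n}(u_n)}_{(H^1_{A_n})'}\,\norm{u_n}_{H^1_{A_n}} = o(1)$ (from Claim~\ref{claimBounded} and the hypothesis on $\mathcal{I}'_{A_n}(u_n)$) in the identity
\[
\mathcal{I}_{A_n}(u_n) = \bigl(\tfrac12-\tfrac1p\bigr)\int_{\R^N} \abs{u_n}^p + \tfrac12\dualprod{\mathcal{I}'_{A_n}(u_n)}{u_n},
\]
this translates into $\int_{\R^N} \abs{u_{n_\ell}}^p \to \tfrac{p}{p-2}\mathcal{I}_A(u) = \int_{\R^N} \abs{u}^p$, the last equality coming from testing $\mathcal{I}'_A(u) = 0$ (given by Claim~\ref{claimWeakConvergence}) against $u$. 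Since $\abs{\tau^{A_n}_{a_n} u_n} = \abs{u_n(\,\cdot\, - a_n)}$ and the magnetic translation commutes with the covariant derivative, the $H^1_{A_n}$-norm is preserved under $\tau^{A_n}_{a_n}$; setting $v_{n_\ell} \defeq \tau^{A_{n_\ell}}_{a_{n_\ell}} u_{n_\ell}$, the relation $\int \abs{D_{A_n} u_n}^2 + \abs{u_n}^2 = \int \abs{u_n}^p + o(1)$ together with $\int \abs{D_A u}^2 + \abs{u}^2 = \int \abs{u}^p$ yields
\[
\int_{\R^N} \abs{D_{A_{n_\ell}} v_{n_\ell}}^2 + \abs{v_{n_\ell}}^2 \;\longrightarrow\; \int_{\R^N} \abs{D_A u}^2 + \abs{u}^2.
\]

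The crucial step is to decouple the two contributions. By Claim~\ref{claimWeakConvergence} and weak lower semicontinuity of the norm in the fixed ambient Hilbert space $L^2(\R^N)$, one has $\int \abs{u}^2 \le \liminf \int \abs{v_{n_\ell}}^2$ and $\int \abs{D_A u}^2 \le \liminf \int \abs{D_{A_{n_\ell}} v_{n_\ell}}^2$. Together with the convergence of the sum to the sum of the limits, each term is forced to converge individually: if along some subsequence $\int \abs{v_{n_\ell}}^2$ tended to a value strictly larger than $\int \abs{u}^2$, then along the same subsequence $\int \abs{D_{A_{n_\ell}} v_{n_\ell}}^2$ would tend to a value strictly below $\int \abs{D_A u}^2$, contradicting the second semicontinuity bound.

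Finally, weak convergence together with convergence of the norm in the Hilbert space $L^2(\R^N)$ gives strong convergence via the identity
\[
\int_{\R^N} \abs{v_{n_\ell} - u}^2 = \int_{\R^N} \abs{v_{n_\ell}}^2 - 2\int_{\R^N}\scalprod{v_{n_\ell}}{u} + \int_{\R^N} \abs{u}^2 \longrightarrow 0,
\]
and analogously for the covariant derivatives, $D_{A_{n_\ell}} v_{n_\ell} \to D_A u$ strongly in $L^2(\R^N)$. The only delicate step is the norm-decoupling argument in the preceding paragraph; the rest is routine and goes through precisely because the ambient $L^2$-targets do not depend on $n$, so that standard Hilbert-space facts apply without reference to the varying magnetic Sobolev structure.
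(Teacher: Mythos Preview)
Your proof is correct and follows essentially the same route as the paper: combine the hypothesis with Claim~\ref{claimLiminf} to get convergence of the energies, use the identity linking \(\mathcal{I}_{A_n}(u_n)\), \(\dualprod{\mathcal{I}'_{A_n}(u_n)}{u_n}\) and the \(H^1_{A_n}\)-norm to deduce convergence of the total norm, and then upgrade weak to strong convergence in \(L^2\). The paper's version is terser --- it writes the identity with the factor \((\tfrac{1}{2}-\tfrac{1}{p})\) directly rather than passing through \(\int\abs{u_n}^p\), and it leaves the decoupling of the two \(L^2\)-pieces and the ``weak plus norm implies strong'' step implicit --- but your explicit justification of these points is exactly what is needed to make the conclusion rigorous.
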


\begin{proofclaim}

By assumption, we have $\limsup_{\ell \to \infty} \mathcal{I}_{A_{n_\ell}} (u_{n_\ell}) \le \mathcal{I}_{A} (u)$ so that in view of Claim~\ref{claimLiminf} we conclude that $\lim_{l \to \infty} \mathcal{I}_{A_{n_\ell}}(u_{n_\ell}) = \mathcal{I}_A (u)$. Then, since 
\[
  \bigl(\tfrac{1}{2}-\tfrac{1}{p}\bigr) \int_{\R^N} \abs{D_{A_{n_\ell}} (\tau^{A_{n_\ell}}_{a_{n_\ell}} u_{n_\ell})}^2 + \abs{\tau^{A_{n_\ell}}_{a_{n_\ell}} u_{n_\ell}}^2 = \mathcal{I}_{A_{n_\ell}} (u_{n_\ell})- \frac{1}{p} \dualprod{\mathcal{I}'_{A_{n_\ell}} (u_{n_\ell})}{u_{n_\ell}}
\]
and 
\[
  \bigl(\tfrac{1}{2}-\tfrac{1}{p}\bigr) \int_{\R^N} \abs{D_{A} u}^2 + \abs{u}^2 = \mathcal{I}_{A} (u),
\]
we conclude that $\tau^{A_{n_\ell}}_{a_{n_\ell}} u_{n_\ell} \to u$ and $D_{A_{n_\ell}}(\tau^{A_{n_\ell}}_{a_{n_\ell}} u_{n_\ell}) \to D_A u$ strongly in $L^2 (\R^N)$. 
\end{proofclaim}
\end{proof}

\subsection{Continuity of groundstates}

We are now ready to state and prove the main result of this section.

\begin{proposition}[Continuity of \(\mathcal{E}\) and of the groundstates] \label{propositionContinuityGroundState}
The ground-energy function $\mathcal{E}:\bigwedge^2 \Rset^N \to \R$ is continuous. 
Moreover, if $(A_n)_{n \in \N}$ is a sequence in $L^2_\mathrm{loc} (\R^N, \bigwedge^1 \R^N)$ such that $A_n \to A$ in $L^2_{\mathrm{loc}}(\mathbb{R}^N)$ as $n \to \infty$ and if the sequence $(u_n)_{n \in \N}$ in $H^1_{A_n} (\R^N,\mathbb{C})$ satisfies $\mathcal{I}_{A_n}' (u_n) = 0$ and $\mathcal{I}_{A_n} (u_n) = \mathcal{E} (dA_n)$, then there exist $u \in H^1_A (\R^N, \mathbb{C})$ with $\mathcal{I}_A (u) = \mathcal{E} (dA)$ and $\mathcal{I}'_{A} (u) = 0$, a sequence $(a_n)_{n \in \N}$ in $\R^N$ and a subsequence such that $\tau^{A_{n_\ell}}_{a_{n_\ell}} u_{n_\ell} \to u$ and $D_{A_{n_\ell}} (\tau^{A_{n_\ell}}_{a_{n_\ell}} u_{n_\ell}) \to D_{A} u$ strongly in $L^2 (\R^N)$ as $\ell \to \infty$.
\end{proposition}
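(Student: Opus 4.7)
The plan is to establish continuity of $\mathcal{E}$ and the convergence of groundstates in a single sweep: an upper semicontinuity bound via trial functions in $C^\infty_c$, and the matching lower semicontinuity plus strong convergence from the Palais--Smale type condition of Lemma~\ref{lemmaPalaisSmale}.

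\emph{Step 1 (Upper semicontinuity of $\mathcal{E}$).} Using the variational characterization from Lemma~\ref{lemmaExistenceGroundstate} together with the density of $C^\infty_c(\R^N,\C)$ in $H^1_A(\R^N,\C)$, given $\varepsilon>0$ I pick $\varphi\in C^\infty_c(\R^N,\C)\setminus\{0\}$ with $\mathcal{Q}_A(\varphi)\le \inf \mathcal{Q}_A+\varepsilon$. Because $\varphi$ has compact support and $A_n\to A$ in $L^2_{\mathrm{loc}}(\R^N)$, we have $\varphi\in H^1_{A_n}(\R^N,\C)$ for every $n$ and $\mathcal{Q}_{A_n}(\varphi)\to \mathcal{Q}_A(\varphi)$. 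Applying Lemma~\ref{lemmaExistenceGroundstate} at each $A_n$ yields $\mathcal{E}(dA_n)\le (\tfrac12-\tfrac1p)\mathcal{Q}_{A_n}(\varphi)^{p/(p-2)}$; passing to the limsup and then sending $\varepsilon\to 0$ gives $\limsup_n\mathcal{E}(dA_n)\le \mathcal{E}(dA)$.

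\emph{Step 2 (Palais--Smale applied to the groundstates).} For the sequence $(u_n)$ in the statement, we have identically $\|\mathcal{I}'_{A_n}(u_n)\|_{(H^1_{A_n})'}=0\to 0$ and, by Step~1, $\limsup_n\mathcal{I}_{A_n}(u_n)=\limsup_n\mathcal{E}(dA_n)\le \mathcal{E}(dA)<\infty$. Lemma~\ref{lemmaPalaisSmale} then produces a subsequence $(n_\ell)$, a sequence of points $(a_{n_\ell})$ in $\R^N$, and some $u\in H^1_A(\R^N,\C)\setminus\{0\}$ with $\mathcal{I}'_A(u)=0$ such that $\tau^{A_{n_\ell}}_{a_{n_\ell}}u_{n_\ell}\weakto u$ and $D_{A_{n_\ell}}(\tau^{A_{n_\ell}}_{a_{n_\ell}}u_{n_\ell})\weakto D_A u$ weakly in $L^2(\R^N)$, with $\liminf_\ell\mathcal{I}_{A_{n_\ell}}(u_{n_\ell})\ge \mathcal{I}_A(u)$. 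Since $u$ is a nontrivial critical point of $\mathcal{I}_A$, the definition of $\mathcal{E}$ forces $\mathcal{I}_A(u)\ge \mathcal{E}(dA)$.

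\emph{Step 3 (Closing the estimates and upgrading to strong convergence).} Chaining everything,
\begin{equation*}
\mathcal{E}(dA)\le \mathcal{I}_A(u)\le \liminf_\ell\mathcal{I}_{A_{n_\ell}}(u_{n_\ell})=\liminf_\ell\mathcal{E}(dA_{n_\ell})\le \limsup_n\mathcal{E}(dA_n)\le \mathcal{E}(dA),
\end{equation*}
so equality holds throughout. In particular $\mathcal{I}_A(u)=\mathcal{E}(dA)$, identifying $u$ as a groundstate for $A$. Applying this scheme to any subsequence of $(A_n)$ shows $\mathcal{E}(dA_n)\to\mathcal{E}(dA)$ for the full sequence, which is the continuity of $\mathcal{E}$. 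Finally, the equalities above imply $\limsup_\ell\mathcal{I}_{A_{n_\ell}}(u_{n_\ell})=\mathcal{E}(dA)=\mathcal{I}_A(u)$, so the additional clause of Lemma~\ref{lemmaPalaisSmale} promotes the weak convergences to strong convergence of $\tau^{A_{n_\ell}}_{a_{n_\ell}}u_{n_\ell}$ and $D_{A_{n_\ell}}(\tau^{A_{n_\ell}}_{a_{n_\ell}}u_{n_\ell})$ in $L^2(\R^N)$, which is the last assertion of the proposition.

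The only real obstacle is Step~1: one cannot simply insert a groundstate for $A$ as a competitor in $\mathcal{Q}_{A_n}$, because the magnetic Sobolev space depends on the potential and there is no reason a priori for an $H^1_A$ function to lie in $H^1_{A_n}$. Approximating through compactly supported smooth functions neatly sidesteps this issue, since these belong to every magnetic Sobolev space at once and $\mathcal{Q}_{A_n}(\varphi)$ depends continuously on $A_n$ when $\varphi$ is fixed with compact support.
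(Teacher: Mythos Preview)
Your proof is correct and follows essentially the same route as the paper: upper semicontinuity via the density of compactly supported smooth functions in $H^1_A(\R^N,\C)$ together with the characterization of Lemma~\ref{lemmaExistenceGroundstate}, and lower semicontinuity plus strong convergence from the Palais--Smale condition of Lemma~\ref{lemmaPalaisSmale}. The paper phrases Step~1 slightly more abstractly (``$\mathcal{E}$ is an infimum over $C^1_c$ of continuous functions of $A$, hence upper semicontinuous''), but this is exactly your $\varepsilon$-approximant argument; the only cosmetic point is that to deduce continuity of $\mathcal{E}$ on all of $\bigwedge^2\Rset^N$ you should note that for an \emph{arbitrary} sequence $A_n\to A$ one invokes Lemma~\ref{lemmaExistenceGroundstate} to produce groundstates $u_n$ before running Steps~2--3.
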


\begin{proof}
We first prove the upper semicontinuity of $\mathcal{E}$. For this, we observe that, by density of $C^1_c (\R^N, \C)$ in $H^1_{A} (\R^N, \mathbb{C})$ (see for example \cite{LiebLoss}*{Theorem 7.22}) and by the characterization of Lemma~\ref{lemmaExistenceGroundstate}, we have
\[
  \mathcal{E} (dA) = \bigl(\tfrac{1}{2} - \tfrac{1}{p}\bigr) \inf_{v \in C^1_c (\R^N,\C) \setminus \{0\}} \bigl(\mathcal{Q}_{A}(v)\bigr)^\frac{p}{p - 2}.
\]
Since for every $v \in C^1_c (\R^N, \C)$, the function $A \mapsto \mathcal{Q}_A (v)$ is continuous, $\mathcal{E}$ is upper semicontinuous as an infimium of upper semicontinuous functions. 

Let us now prove that $\mathcal{E}$ is lower semicontinuous. By Lemma~\ref{lemmaExistenceGroundstate}, there exists $u_n \in H^1_{A_n} (\R^N, \mathbb{C})$ such that $\mathcal{I}_{A_n} (u_n) = \mathcal{E} (dA_n)$ and $\mathcal{I}_{A_n}' (u_n) = 0$. By Lemma~\ref{lemmaPalaisSmale}, there exist $u \in H^1_{A} (\R^N, \mathbb{C}) \setminus \{0\}$ with $\mathcal{I}_A'(u)=0$, a sequence $(a_n)_{n \in \N}$ in $\mathbb{R}^N$ and a subsequence $(n_\ell)_{\ell \in \N}$ in $\N$ such that 
$\tau^{A_{n_\ell}}_{a_{n_\ell}} u_{n_\ell} \weakto u$, $D_{A_{n_\ell}} (\tau^{A_{n_\ell}}_{a_{n_\ell}} u_{n_\ell}) \weakto D_A u$ weakly in $L^2 (\R^N)$ as $\ell \to \infty$ and 
\[
\lim_{\ell \to + \infty} \mathcal{I}_{A_{n_\ell}} (u_{n_\ell}) = \liminf_{n \to \infty} \mathcal{I}_{A_n} (u_n) \ge \mathcal{I}_A (u),
\]
if we choose well the subsequence. Since $\mathcal{I}_A (u) \ge \mathcal{E} (dA)$, it follows that the function $\mathcal{E}$ is lower semicontinuous. Moreover, since $\mathcal{E}$ is upper semicontinuous, we conclude that $\tau^{A_{n_\ell}}_{a_{n_\ell}} u_{n_\ell} \to u$ and $D_{A_{n_\ell}} (\tau^{A_{n_\ell}}_{a_{n_\ell}} u_{n_\ell}) \to D_A u$ strongly in $L^2 (\R^N)$ as $\ell \to \infty$.
\end{proof}

\section{Uniqueness up to magnetic translations and rotations in $\mathbb{C}$ of the groundstates} \label{section:uniquenessandsymmetry}

In this section, we prove that when the magnetic field $dA$ is small enough, the groundstate of the magnetic nonlinear Schr\"odinger equation \eqref{eqNLSEMag} is essentially unique. Uniqueness holds up to the invariances of the problem, namely magnetic translations $\tau_a^A$ and rotations in $\C$. To deduce this result, we start from the case $A = 0$, for which we know that there is uniqueness of the groundstate up to translations and rotations in $\C$, as proved in Proposition~\ref{propositionDiamagneticUniqueness}, and we analyse the problem when $dA$ is small as a perturbation of it. 

A natural tool to perform this perturbation analysis is the implicit function theorem. However, an application of the implicit function theorem would face the difficulty of finding the right framework to work in variable magnetic Sobolev spaces. We rely instead on a spectral approach that consists in proving directly the uniqueness by a sort of local injectivity theorem reminiscent of Bonheure, Bouchez, Grumiau and Van Schaftingen, see \cite{BonheureBouchezGrumiauVanSchaftingen2008} (see also \cite{BonheureBouchezGrumiau2009}). Compared to their work, we face the additional difficulty that the linear operator acts on variable functional spaces.

\subsection{Spectral theory across magnetic Sobolev spaces}

Our main tool is to prove the stability of the spectrum of a magnetic operator under perturbations of the potentials. If the magnetic potential is constant, this is quite classical. The difficulty  when the magnetic potential varies is that its variations are not controlled globally in any norm.

\smallbreak

We first study the spectrum of the sequence of linear operators 
\begin{equation}\label{eq:Ln}
L_n : H^1_{A_n} (\R^N,\C)\to H^1_{A_n} (\R^N,\C) : v\mapsto L_n v := (-\Delta_{A_n} + 1)^{-1} W_n [v],
\end{equation}
where 
\begin{enumerate}[$(H_1)$]
\item $(A_n)_{n \in \N}$ is a sequence in $L^2_{\mathrm{loc}}(\R^N)$,
\item $(W_n)_{n \in \N}$ is a sequence in $L^q (\R^N, \Lin(\C,\C))$ with $q \ge \frac{N}{2}$ and $q > 1$ , 
\item $W_n$ is self-adjoint and $W_n \ge 0$ on $\R^N$, that is $\scalprod{z}{W_n[z]} \geq 0$ for every $z \in \mathbb{C}$.
\end{enumerate}
The next lemma is given with a proof for completeness.

\begin{lemma} \label{lemma:compact-L}
If the assumptions $(H_1)$--$(H_3)$ hold, then the operator $L_n$ is self-adjoint and compact. 
\end{lemma}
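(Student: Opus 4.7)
My plan is to work with $n$ fixed and drop the subscript, writing $A = A_n$, $W = W_n$, $L = L_n$. In a first step, I would verify that $L : H^1_A(\R^N,\C) \to H^1_A(\R^N,\C)$ is well-defined and bounded: the bilinear form $a(u,v) \defeq \int_{\R^N} \scalprod{D_A u}{D_A v} + \scalprod{u}{v}$ is exactly the scalar product on $H^1_A$, so $(-\Delta_A + 1)$ is the canonical Riesz isomorphism $H^1_A \to (H^1_A)'$, whose inverse is isometric in the reverse direction. It then remains to check that multiplication by $W$ sends $H^1_A$ continuously into $(H^1_A)'$: combining the Sobolev embedding $H^1_A \hookrightarrow L^{2^*}$ that follows from the diamagnetic inequality (as used in Lemma~\ref{lemmaSobolev}) with H\"older's inequality and the identity $\tfrac{1}{q} + \tfrac{2}{2^*} = 1$ at the borderline $q = N/2$ (with a parallel argument when $N = 2$, using $H^1_A \hookrightarrow L^r$ for every $r < \infty$ and the assumption $q > 1$), one obtains for $u,v \in H^1_A$
\[ \Bigabs{\int_{\R^N} \scalprod{Wu}{v}} \le \norm{W}_{L^q} \norm{u}_{L^{2^*}} \norm{v}_{L^{2^*}} \le C \norm{W}_{L^q} \norm{u}_{H^1_A} \norm{v}_{H^1_A}. \]

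Self-adjointness is then immediate from the chain
\[ \scalprod{L u}{v}_{H^1_A} = a(L u, v) = \int_{\R^N} \scalprod{W u}{v} = \int_{\R^N} \scalprod{u}{W v} = \scalprod{u}{L v}_{H^1_A}, \]
where the second equality is the characterization of $(-\Delta_A+1)^{-1}$ through $a$ and the third uses the pointwise self-adjointness of $W$; setting $u = v$ and invoking $W \ge 0$ also shows that $L$ is nonnegative.

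The main step is compactness. Given $(v_k)$ bounded in $H^1_A$, weak sequential compactness in a Hilbert space yields $v \in H^1_A$ with $v_k \weakto v$ along a subsequence, and Lemma~\ref{lemmaRellich} gives $v_k \to v$ strongly in $L^p_\mathrm{loc}(\R^N)$ for every $p \in [1, 2^*)$. Since $(-\Delta_A+1)^{-1}$ is an isometry $(H^1_A)' \to H^1_A$, it suffices to show $W(v_k - v) \to 0$ strongly in $(H^1_A)'$. I would do this by approximating $W$ in $L^q$ by bounded and compactly supported potentials $\widetilde W \defeq W \mathbf{1}_{\{\abs{W} \le M\} \cap B_R}$: for $\varepsilon > 0$, choose $M, R$ large enough that $\norm{W - \widetilde W}_{L^q} < \varepsilon$. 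The tail $(W - \widetilde W)(v_k - v)$ is controlled in $(H^1_A)'$ by $C\varepsilon$ uniformly in $k$ via the H\"older estimate of the first step, while $\widetilde W (v_k - v) \to 0$ in $L^2(\R^N)$ by local Rellich compactness applied to the bounded and compactly supported $\widetilde W$, and $L^2 \hookrightarrow (H^1_A)'$ via the Sobolev embedding dual to $H^1_A \hookrightarrow L^{2^*}$. Letting $\varepsilon \to 0$ after $k \to \infty$ yields compactness. The main obstacle is precisely the borderline integrability $q = N/2$ (or $q$ close to $1$ in dimension $N = 2$): at that exponent multiplication by $W$ is not itself compact as a map $H^1_A \to (H^1_A)'$, so one cannot simply compose a compact multiplication with the bounded operator $(-\Delta_A+1)^{-1}$; the truncation argument sketched above circumvents this by trading a small $L^q$-tail of $W$ against local $L^2$-compactness of the truncated multiplication operator.
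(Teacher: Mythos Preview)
Your argument is correct and rests on the same core ingredient as the paper's proof---namely, that $W \in L^q$ has arbitrarily small $L^q$-norm outside a large compact set, so one can combine a uniform tail estimate with local Rellich compactness. The packaging differs: you factor $L$ as the isometry $(-\Delta_A+1)^{-1}$ applied to the multiplication $v \mapsto Wv$ and show the latter is compact from $H^1_A$ to $(H^1_A)'$ by truncating $W$, whereas the paper works directly with the images $w_k = L v_k$, proves they are bounded, extracts a weak limit $w$, and upgrades to strong convergence by showing $\int \scalprod{w_k}{W[v_k]} \to \int \scalprod{w}{W[v]}$ and then reading off $\norm{w_k}_{H^1_A} \to \norm{w}_{H^1_A}$ from the equation. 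Your route is slightly more modular and makes the role of the borderline exponent $q = N/2$ more explicit; the paper's route avoids introducing the dual space and the truncated potential $\widetilde W$. One small quibble: the embedding $L^2 \hookrightarrow (H^1_A)'$ is the dual of $H^1_A \hookrightarrow L^2$, not of $H^1_A \hookrightarrow L^{2^*}$ as you wrote, but this does not affect the argument.
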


\begin{proof}
The fact that $L_n$ is self-adjoint is clear. To prove the compactness, assume that $(v_k)_{k \in \N}$ is a bounded sequence in $H^1_{A_n} (\R^N,\C)$. Therefore there exists a weak limit $v \in H^1_{A_n}(\R^N,\C)$. We write $w_k = L_n(v_k)$. For each \(k \in \N\), the function  $w_k \in H^1_{A_n} (\R^N,\C)\) satisfies the equation
\[
  -\Delta_{A_n} w_k+ w_k = W_n[v_k].
\]
Therefore
\[ 
\int_{\R^N} \abs{D_{A_n} w_k}^2 + \abs{w_k}^2 = \int_{\R^N} \scalprod{w_k}{W_n [v_k]}\le \int_{\R^N} |W_n|_{\Lin(\C,\C)} |v_k| |w_k|.
\]
Using the boundedness of $W_n$ in $L^q(\R^N)$ and Sobolev embeddings, we deduce that $(w_k)_k$ is bounded in $H^1_{A_n}(\R^N,\C)$. This implies the existence of a weak limit $w\in H^1_{A_n} (\R^N,\C)$.   
For every $\varepsilon>0$, there exists a compact set $K_\varepsilon$ such that 
\[ 
\||W_n|_{\Lin(\C,\C)}\|_{L^q(K_\varepsilon^c)}\le \frac{\varepsilon}{4 \sup_k(\|v_k\|,\|w_k\|)}
\]
so that 
\[
\left|\int_{K_\varepsilon^c}  \scalprod{w_k}{W_n [v_k]} - \scalprod{w}{W_n [v]}\right|\le \varepsilon.
\]
This estimate combined with local compactness yields
\begin{equation} \label{eq:1}
\int_{\R^N}  \scalprod{w_k}{W_n [v_k]} \to \int_{\R^N}  \scalprod{w}{W_n [v]}.
\end{equation}
By testing the equation of $w_k$ on $w$ and using in order the weak convergence $w_k \weakto w$ and \eqref{eq:1}, we conclude that
\begin{equation*}
\begin{split}
\int_{\R^N} \abs{D_{A_n} w}^2 + \abs{w}^2  & = \lim_{k\to\infty}\int_{\R^N} \scalprod{w}{W_n [v_k]}\\
& = \lim_{k\to\infty}\int_{\R^N} \scalprod{w_k}{W_n [v_k]} = \lim_{k\to\infty}\int_{\R^N} \abs{D_{A_n} w_k}^2 + \abs{w_k}^2,
\end{split}
\end{equation*}
so that the convergence is strong in $H^1_{A_n} (\R^N,\C)$. 
\end{proof}

Lemma~\ref{lemma:compact-L} and the positivity of $W_n$ imply that  $L_n$ has a nonincreasing sequence of positive eigenvalues converging to \(0\) and by Fischer's min-max principle, see for example \cite{Lax2002}*{Theorem 28.4}, one has 
\begin{equation}
\label{eqFischerMinMaxSobolev}
\lambda_k (L_n) = \sup_{\substack{E \subset H^1_{A_n} (\R^N, \C)\\ \dim E = k}}  \inf_{v \in E}
  \frac{\displaystyle \int_{\R^N} \scalprod{v}{W_n [v]} }{\displaystyle \int_{\R^N} \abs{D_{A_n} v}^2 + \abs{v}^2}.
\end{equation}
We now turn to the convergence of those eigenvalues when $A_n$ and $W_n$ have strong limits. 

\begin{proposition}%
[Convergence of eigenvalues and eigenfunctions]%
\label{propositionSpectrum}
Assume that $(H_1)-(H_3)$ hold. If $A_n \to A$ strongly in $L^2_\mathrm{loc} (\R^N)$ and $W_n \to W$ strongly in $L^q (\R^N)$ as $n \to \infty$,
then 
\[
  \lambda_k (L_n) \to \lambda_k (L),
\]
where $\lambda_k(L_n)$, $\lambda_k(L)$ are respectively the $k$-th eigenvalues of $L_n$, $L$, and $L : H^1_A(\mathbb{R}^N, \C) \to H^1_A(\R^N,\C)$ is defined as
\[
L v = (- \Delta_A + 1)^{-1} W [v].
\]
Moreover, if $u_n \in H^1_{A_n} (\R^N, \mathbb{C})$ is an eigenfunction of $L_n$ satisfying
\[
  L_n u_n = \lambda_k (L_n) u_n
\]
and 
\[
  \int_{\R^N} \abs{D_{A_n} u_n}^2  + \abs{u_n}^2 = 1,
\]
then there exist $u \in H^1_A (\R^N, \C)$ and a subsequence $(n_\ell)_{\ell \in \mathbb{N}}$ such that $u_{n_\ell} \rightarrow u$ and $D_{A_{n_\ell}} u_{n_\ell} \rightarrow D_{A}  u$ strongly in $L^2 (\R^N)$.
\end{proposition}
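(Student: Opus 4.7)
The plan is to establish $\lambda_k(L_n) \to \lambda_k(L)$ and the convergence of eigenfunctions via the Fischer min--max characterization~\eqref{eqFischerMinMaxSobolev}. The lower bound $\liminf_n \lambda_k(L_n) \geq \lambda_k(L)$ is obtained by testing the min--max with a finite dimensional space of smooth, compactly supported functions, while the matching upper bound requires an extraction argument at the level of eigenfunctions and an upgrade of weak to strong $L^2(\R^N)$ convergence.

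For the lower bound, fix $\varepsilon > 0$ and pick a $k$-dimensional subspace $E \subset H^1_A(\R^N, \C)$ nearly saturating the supremum in~\eqref{eqFischerMinMaxSobolev} for $L$. By density of $C^\infty_c(\R^N, \C)$ in $H^1_A(\R^N, \C)$ and continuity of the Rayleigh quotient on finite dimensional subspaces, I may assume $E \subset C^\infty_c(\R^N, \C)$, and then $E \subset H^1_{A_n}(\R^N, \C)$ for every $n$. Since every $v \in E$ has compact support, the convergences $A_n \to A$ in $L^2_{\mathrm{loc}}(\R^N)$ and $W_n \to W$ in $L^q(\R^N)$ give $\int \abs{D_{A_n} v}^2 + \abs{v}^2 \to \int \abs{D_A v}^2 + \abs{v}^2$ and $\int \scalprod{v}{W_n[v]} \to \int \scalprod{v}{W[v]}$, uniformly on the unit sphere of $E$ by finite dimensionality. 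The Fischer min--max \eqref{eqFischerMinMaxSobolev} then yields $\liminf_n \lambda_k(L_n) \geq \lambda_k(L) - \varepsilon$, and $\varepsilon$ is arbitrary.

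For the upper bound and eigenfunction convergence, pick an $H^1_{A_n}$-orthonormal system $u_n^{(1)}, \dotsc, u_n^{(k)}$ of eigenfunctions of $L_n$ associated with the top eigenvalues $\lambda_1(L_n) \geq \dotsb \geq \lambda_k(L_n)$. By Lemma~\ref{lemma:WeakCompactness} and diagonal extraction, along a subsequence $u_n^{(j)} \weakto u^{(j)}$ and $D_{A_n} u_n^{(j)} \weakto D_A u^{(j)}$ weakly in $L^2(\R^N)$ with $u^{(j)} \in H^1_A(\R^N, \C)$, and $\lambda_j(L_n) \to \tilde\lambda_j$ with $\tilde\lambda_j \geq \lambda_j(L) > 0$ by the lower bound. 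Passing to the limit in the weak form of $-\Delta_{A_n} u_n^{(j)} + u_n^{(j)} = \lambda_j(L_n)^{-1} W_n[u_n^{(j)}]$ tested against $\varphi \in C^\infty_c(\R^N, \C)$, the diffusion terms pass by weak--strong pairing of $D_{A_n} u_n^{(j)} \weakto D_A u^{(j)}$ against $D_{A_n} \varphi \to D_A \varphi$ in $L^2(\R^N)$, while the right-hand side converges thanks to Lemma~\ref{lemmaRellich} (local strong convergence of $u_n^{(j)}$ in $L^{q'}_{\mathrm{loc}}$, valid since $q' < \frac{2N}{N-2}$ under the hypothesis $q \geq N/2$, $q > 1$) combined with $W_n \to W$ in $L^q(\R^N)$. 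The limit $u^{(j)}$ therefore solves $-\Delta_A u^{(j)} + u^{(j)} = \tilde\lambda_j^{-1} W[u^{(j)}]$ in the weak sense.

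To upgrade from weak to strong convergence, test the equation for $u_n^{(j)}$ against $u_n^{(j)}$ itself, giving $\norm{u_n^{(j)}}_{H^1_{A_n}}^2 = \lambda_j(L_n)^{-1} \int_{\R^N} \scalprod{W_n[u_n^{(j)}]}{u_n^{(j)}}$. The key integral convergence $\int \scalprod{W_n[u_n^{(j)}]}{u_n^{(j)}} \to \int \scalprod{W[u^{(j)}]}{u^{(j)}}$ is obtained by a Vitali-type argument: the diamagnetic inequality~\eqref{eqDiamagnetic} and the Sobolev embedding give a uniform $L^{2q'}(\R^N)$-bound on $u_n^{(j)}$ (since $2 q' \leq \frac{2N}{N-2}$), the $L^q(\R^N)$-convergence $W_n \to W$ provides tightness at infinity and equi-integrability, and Lemma~\ref{lemmaRellich} yields almost everywhere convergence along a subsequence. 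Combined with the tested limit equation, this forces $\norm{u^{(j)}}_{H^1_A} = 1 = \norm{u_n^{(j)}}_{H^1_{A_n}}$, so the weak convergences upgrade to strong $L^2(\R^N)$ convergence of both $u_n^{(j)}$ and $D_{A_n} u_n^{(j)}$. Strong convergence preserves $H^1_A$-orthonormality, hence $F \defeq \mathrm{span}(u^{(1)}, \dotsc, u^{(k)})$ is $k$-dimensional, and testing the limit equations pairwise gives $\int \scalprod{W[u^{(j)}]}{u^{(\ell)}} = \tilde\lambda_j \delta_{j\ell}$. Fischer min--max applied to $F$ yields $\lambda_k(L) \geq \tilde\lambda_k = \lim_n \lambda_k(L_n)$, closing the loop together with the lower bound. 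The main obstacle is precisely this upgrade from weak to strong $L^2$ convergence, which is only accessible thanks to the global $L^q(\R^N)$-convergence of the potentials $W_n$, providing the tightness and equi-integrability that the purely local compactness of Lemma~\ref{lemmaRellich} cannot deliver on its own.
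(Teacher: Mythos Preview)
Your proof is correct and follows essentially the same approach as the paper: the lower bound via min--max on compactly supported test spaces is identical, and for the upper bound you extract weakly convergent orthonormal eigenfunctions, establish the key convergence $\int \scalprod{W_n[u_n^{(j)}]}{u_n^{(j)}} \to \int \scalprod{W[u^{(j)}]}{u^{(j)}}$, and close via min--max. The only cosmetic differences are that the paper obtains this integral convergence by pairing weak $L^{q'}$ convergence of the products $u_n^{(j)} \otimes u_n^{(\ell)}$ with strong $L^q$ convergence of $W_n$ (rather than your Vitali argument, which uses the same ingredients), and organizes the endgame as an induction on $k$ rather than a single min--max over the span $F$; both packagings are equivalent.
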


\begin{proof}
\resetclaim
We assume that $W \not\equiv 0$. Then, in particular, $\lambda_k (L) > 0$ for every $k \in \N_*	 \defeq \{ 1, 2, \dotsc \}$. Since $C^1_c (\R^N, \C)$ is dense in $H^1_{A_n} (\R^N, \mathbb{C})$, Fischer's min-max principle \eqref{eqFischerMinMaxSobolev} also yields
\begin{equation}
\label{eqFischerMinMaxSmooth}
 \lambda_k (L_n) = \sup_{\substack{E \subset C^1_c (\R^N, \C)\\ \dim E = k}} \inf_{v \in E}
  \frac{\displaystyle \int_{\R^N} \scalprod{v}{W_n [v]} }{\displaystyle \int_{\R^N} \abs{D_{A_n} v}^2 + \abs{v}^2}.
\end{equation}
The advantage of \eqref{eqFischerMinMaxSmooth} over \eqref{eqFischerMinMaxSobolev} is that the distinct magnetic spaces do not appear anymore in the set over which the supremum is taken.

\medbreak

\begin{claim}
\label{claimEigenLimInf}
For every $k \in \mathbb{N}_0$,
\begin{equation*}
\liminf_{n \to + \infty} \lambda_k(L_n) \geq \lambda_k (L).
\end{equation*} 
\end{claim}

\begin{proofclaim}
If the linear subspace $E \subset C^1_c (\R^N, \C)$ has finite dimension, we have
\begin{equation*}
 \frac{\displaystyle \int_{\R^N} \scalprod{v}{W_n [v]}}{\displaystyle \int_{\R^N} \abs{D_{A_n} v}^2 + \abs{v}^2}
  \to
  \frac{\displaystyle \int_{\R^N} \scalprod{v}{W [v]} }{\displaystyle \int_{\R^N} \abs{D_{A}  v}^2 + \abs{v}^2},
\end{equation*}
uniformly in $v \in E$. The convergence holds because of the strong convergences $W_n \to W$ in $L^q(\mathbb{R}^N)$ and $A_n \to A$ in $L^2_{\mathrm{loc}}(\mathbb{R}^N)$, while the uniformity follows from the finiteness of the dimension of $E$. The claimed inequality is now obvious since $\lambda_k (L_n)$ is characterized in \eqref{eqFischerMinMaxSmooth} by a supremum on $k$-dimensional spaces in $C^1_c (\mathbb{R}^N, \mathbb{C})$.
\end{proofclaim}

It remains to prove the converse inequality and the convergence of the eigenfunctions.
Let the sequence $(v_n^k)_{k \in \N}$ be an orthonormal basis in $H^1_{A_n} (\R^N, \mathbb{C})$ of eigenvectors of $L_n$, that is,
\begin{equation*}
  \int_{\R^N} \abs{D_{A_n} v_n^k}^2 + \abs{v_n^k}^2 = 1, \qquad \quad \int_{\R^N} \scalprod{v_n^k}{W_n[v_n^k]} = \lambda_k (L_n),
\end{equation*}
and, if $j \ne k$,
\[
  \int_{\R^N} \scalprod{v^k_n}{W_n [v^j_n]}
  = \int_{\R^N} \scalprod{D_{A_n} v_n^k}{D_{A_n} v_n^j} + \scalprod{v_n^k}{v_n^j}
  = 0.
\] 
From the weak compactness property across magnetic spaces of Lemma~\ref{lemma:WeakCompactness}, and going to a subsequence $(n_\ell)_{\ell \in \mathbb{N}}$, we can assume that 
\[
\lambda_k(L_{n_\ell}) \rightarrow \lambda_k^\star := \limsup_{n \to + \infty} \lambda_k (L_n)
\] 
and there exists a function $v^k \in H^1_A(\mathbb{R}^N, \mathbb{C})$ such that 
$v^k_{n_\ell} \weakto v^k$, $D_{A_{n_\ell}} v^k_{n_\ell} \weakto D v^k$ weakly in $L^2 (\R^N)$ as $\ell \to + \infty$ . 

\begin{claim}\label{Claim2}

For every $k \in \mathbb{N}_0$, we have  $\lambda_k^\star = \lambda_k (L)$, $L v^k = \lambda_k (L) v^k$ and the convergences $v_{n_\ell}^k \to v^k$, $D_{A_{n_\ell}} v_{n_\ell}^k \rightarrow D_{A}  v^k$ are strong in $L^2(\R^N)$.
\end{claim}

\begin{proofclaim}
We first proceed similarly as in the proof of Lemma~\ref{lemma:compact-L}. Applying Theorem~\ref{lemmaRellich} and Sobolev inequalities together with the diamagnetic inequality \eqref{eqDiamagnetic}, we infer that $v^k_{n_\ell} \otimes v^j_{n_\ell} \weakto v^k \otimes v^j$ weakly in $L^{q / (q - 1)} (\R^N)$ as $\ell \to \infty$.
Then, since $W_n \rightarrow W$ in $L^q (\mathbb{R}^N)$, we have 
\begin{equation} \label{eq:inequality1}
\begin{split}
\lambda_k (L) \le \liminf_{n \to + \infty} \lambda_k (L_n)
\leq \lambda_k^\star &= \lim_{\ell \to \infty} \int_{\R^N} \scalprod{v_{n_\ell}^k}{W_{n_\ell} [v_{n_\ell}^k]}\\ &= \int_{\R^N} \scalprod{v^k}{W [v^k]} 
 \leq \frac{\displaystyle \int_{\R^N} \scalprod{v^k}{W [v^k]} }{\displaystyle \int_{\R^N} \abs{D_{A}  v^k}^2 + \abs{v^k}^2}
\end{split}
\end{equation}
where we have used Claim~\ref{claimEigenLimInf}, the weak lower semi-continuity of the norm and the normalization of the eigenfunctions, and if $j \neq k$,
\[
  \int_{\R^N} \scalprod{v^k}{W [v^j]} =\lim_{\ell \to \infty} \int_{\R^N} \scalprod{v_{n_\ell}^k}{W_{n_\ell} [v_{n_\ell}^j]}
  = 0.
\]
We now proceed by induction. For $k=1$, since $v^1 \in H^1_A(\mathbb{R}^N, \mathbb{C})$ is a competitor, we have
\[ 
 \frac{\displaystyle \int_{\R^N} \scalprod{v^1}{W [v^1]} }{\displaystyle \int_{\R^N} \abs{D_{A} v^1}^2 + \abs{v^1}^2} \leq \sup_{v \in H^1_A(\mathbb{R}^N, \mathbb{C})} \frac{\displaystyle \int_{\R^N} \scalprod{v}{W [v]} }{\displaystyle \int_{\R^N} \abs{D_{A} v}^2 + \abs{v}^2} = \lambda_1(L).
\]
Therefore, \eqref{eq:inequality1} implies $L v^1 = \lambda_1 (L) v^1$ and $\lambda_1^\star=\lambda_1 (L)$. Moreover, we deduce that 
\begin{equation*}
\lambda_1(L) \geq \lambda_1(L) \int_{\mathbb{R}^N} \abs{D_{A} v^1}^2 + \abs{v^1}^2 = \int_{\mathbb{R}^N} \scalprod{v^1}{W [v^1]} \geq \liminf_{n \to + \infty} \lambda_1(L_n) \geq \lambda_1 (L),
\end{equation*}
which implies that $D_{A_{n_\ell}} v^1_{n_\ell} \rightarrow D_{A}  v^1$ and $v_{n_\ell}^1 \to v^1$ strongly in $L^2 (\R^N)$.

Now, assume the claim holds true for $j \in \{1, \dotsc, k-1\}$. Then, one has the orthogonality relations
\[
  \int_{\R^N} \scalprod{D_{A} v^{k}}{D_{A} v^j} + \scalprod{v^{k}}{v^{j}}
  = \int_{\R^N} \scalprod{v^{k}}{W [v^j]} = 0,\quad j=1,\dotsc,k-1.
\]
We therefore deduce fom the variational characterization of eigenvalues of a compact symmetric operator that 
\[ 
 \frac{\displaystyle \int_{\R^N} \scalprod{v^k}{W [v^k]} }{\displaystyle \int_{\R^N} \abs{D_{A} v^k}^2 + \abs{v^k}^2} \leq \lambda_k(L).
\]
Then \eqref{eq:inequality1} implies $L v^k = \lambda_k (L) v^k$ and $\lambda_k^\star=\lambda_k (L)$. Finally, as for $\lambda_1 (L)$, we observe that 
\begin{equation*}
\lambda_k(L) \geq \lambda_k(L) \int_{\mathbb{R}^N} \abs{D_A v^k}^2 + \abs{v^k}^2 = \int_{\mathbb{R}^N} \scalprod{v^k}{W[v^k]} \geq \liminf_{n \to + \infty} \lambda_k (L_n) \geq \lambda_k (L).
\end{equation*}
Again this implies $v_{n_\ell}^k \rightarrow v^k$ and $D_{A_{n_\ell}} v_{n_\ell}^k \to D_{A} v^k$ strongly in $L^2 (\R^N)$. 
\end{proofclaim}

\end{proof}

\subsection{Proof of the local uniqueness}

To prove the local uniqueness of the solutions, up to magnetic translations $\tau_a^A$ and rotations in $\C$, we  assume by contradiction the existence of two distinct groundstates $u_n$ and $v_n$ of \eqref{eqNLSEMag} with $A_n$. 

In the Claim~\ref{claim:convergence-forte} of the proof, we aim to show that we can assume that both entire sequences $u_n$ and $v_n$ converge strongly to the same groundstate $U$ of the limit problem \eqref{eq:limitproblem}. This relies on the uniqueness up to translations and rotations in $\mathbb{C}$ of the limit problem, see Proposition~\ref{propositionDiamagneticUniqueness}, which allows to adjust the sequences using convenient magnetic translations and multiplications by complex phases. 

In Claim~\ref{claim:ortogonality}, we prove that, by modifying slightly the phase and the magnetic translation $\tau_a^A$ used in Claim~\ref{claim:convergence-forte} (keeping the strong convergence to the limit function), $u_n$ and $v_n$ are asymptotically orthogonal in $H^1$ (in a sense that is clarified below) to the tangent space of $U$ given by the $w$ in \eqref{eq:w}, for large $n$.

In Claim~\ref{claim:tangent}, we prove that $u_n - v_n$ is the eigenfunction of a compact operator. At the limit, the equation satisfied by $u_n - v_n$ approaches \eqref{eq:NonDegeneracy}, which we know to be satisfied by the elements of the tangent space of $U$ only.

Heuristically, the combination of the two last claims shows that for large $n$, $u_n - v_n$ is orthogonal to the functions in the tangent space of $U$, and in the same time $u_n - v_n$ is almost in the tangent space of $U$, so that the only possibility is $u_n = v_n$ for $n$ large. The end of the proof relies on the spectral decomposition in eigenvalues greater, equal or smaller than $1$ of the limit operator (see \S \ref{subsec:groundstate0}).

\begin{proof}[Proof of Theorem~\ref{theoremUniqueness}]
\resetclaim
We first assume that $\mathrm{d} A_n \to 0$ as $n \to + \infty$, that is $A_n \to 0$ in $L^2_{\text{loc}}(\R^N)$ as $n \to + \infty$ since $A_n$ is skew-symmetric, and that $u_n$ and $v_n$ are groundstates solutions of \eqref{eqNLSEMag} with $A_n$. Our aim is to show that there exist $\theta_n \in \mathbb{R}$ and $a_n \in \mathbb{R}^N$ such that $u_n = \e^{i \theta_n} \tau^{A_n}_{a_n} v_n$ for $n$ large enough.

Let $U$ be a solution of the limit problem \eqref{eq:limitproblem}. By proposition~\ref{propositionDiamagneticUniqueness}, $U$ is unique up to rotations in $\C$ and translations in $\R^N$.

\begin{claim}  \label{claim:convergence-forte}
There exist sequences $(\Tilde{\theta}_n)_{n \in \N}\) in \(\R$ and $(\Tilde{a}_n)_{n \in \N}\) in \(\R^N$ such that
\[
  \lim_{n \to \infty} \int_{\R^N} \abs{D_{A_n} (\e^{i \Tilde{\theta}_n} \tau^{A_n}_{\Tilde{a}_n} u_n) -  D U}^2 + \abs{\e^{i \Tilde{\theta}_n} \tau^{A_n}_{\Tilde{a}_n} u_n - U}^2 = 0.
\]
\end{claim}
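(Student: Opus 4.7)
The plan is a two-step perturbative argument: first extract a limit via the continuity of Proposition~\ref{propositionContinuityGroundState}, then normalise the limit to $U$ using Proposition~\ref{propositionDiamagneticUniqueness} and the magnetic group law \eqref{eq:composition-translations}.

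Since $A_n$ is skew-symmetric and chosen through the gauge \eqref{eq:Bgauge}, one has the pointwise bound $\abs{A_n(x)[v]} \le \tfrac{1}{2}\abs{dA_n}\,\abs{x}\,\abs{v}$, so $dA_n \to 0$ forces $A_n \to 0$ locally uniformly and in particular strongly in $L^2_{\mathrm{loc}}(\R^N)$. Applying Proposition~\ref{propositionContinuityGroundState} to $(u_n)$ with limiting potential $A = 0$, I obtain (after extracting a subsequence, harmless in the surrounding contradiction argument) a function $u_\infty \in H^1(\R^N,\C)$ with $\mathcal{I}_0(u_\infty) = \mathcal{E}(0)$ and $\mathcal{I}_0'(u_\infty) = 0$, together with a sequence $(a_n)$ in $\R^N$ such that
\[
  \tau^{A_n}_{a_n} u_n \to u_\infty \quad\text{and}\quad D_{A_n}\bigl(\tau^{A_n}_{a_n} u_n\bigr) \to D u_\infty \quad\text{strongly in } L^2(\R^N).
\]
Since $u_\infty$ is a groundstate of the limit problem, Proposition~\ref{propositionDiamagneticUniqueness} provides $\theta \in \R$ and $b \in \R^N$ with $u_\infty = \e^{i\theta}\,U(\,\cdot - b)$.

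The next step is to absorb $b$ into the translation and $\theta$ into the phase. Setting $\tilde a_n := a_n - b$, a rewriting of \eqref{eq:composition-translations} (with $a = a_n$ and $b$ replaced by $-b$) gives
\[
  \tau^{A_n}_{\tilde a_n} u_n = \e^{i A_n(a_n)[b]} \, \tau^{A_n}_{-b}\bigl(\tau^{A_n}_{a_n} u_n\bigr),
\]
while $(\tau^{A_n}_{-b} f)(x) = \e^{i A_n(b)[x]}\, f(x + b)$. The bound $\abs{\e^{iA_n(b)[x]} - 1} \le \min\bigl(2,\tfrac{1}{2}\abs{dA_n}\abs{b}\abs{x}\bigr) \to 0$ pointwise, combined with a dominated convergence estimate and the strong $L^2$-convergence of $\tau^{A_n}_{a_n} u_n$, yields
\[
  \tau^{A_n}_{-b}\bigl(\tau^{A_n}_{a_n} u_n\bigr) \to u_\infty(\,\cdot + b) = \e^{i\theta} U \qquad \text{strongly in } L^2(\R^N).
\]
Choosing $\tilde\theta_n := -\theta - A_n(a_n)[b]$, the product $\e^{i\tilde\theta_n}\e^{iA_n(a_n)[b]} = \e^{-i\theta}$ is a constant phase that cancels the residual $\e^{i\theta}$, giving $\e^{i\tilde\theta_n}\tau^{A_n}_{\tilde a_n} u_n \to U$ in $L^2$. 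The same manipulation works for the covariant derivative: using the commutation $D_{A_n}\circ \tau^{A_n}_{a} = \tau^{A_n}_{a}\circ D_{A_n}$ of \S\ref{sbsec:magnetic-translations} and the fact that multiplication by a constant phase commutes with $D_{A_n}$, the identity $D_{A_n}(\e^{i\tilde\theta_n}\tau^{A_n}_{\tilde a_n} u_n) = \e^{-i\theta}\,\tau^{A_n}_{-b}\bigl(D_{A_n}(\tau^{A_n}_{a_n} u_n)\bigr)$ converges, by the same dominated-convergence argument applied to the $L^2$-convergent vector field $D_{A_n}(\tau^{A_n}_{a_n} u_n) \to D u_\infty$, to $\e^{-i\theta}\,D u_\infty(\,\cdot + b) = DU$ in $L^2$.

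The main obstacle is the non-commutativity of the magnetic translation group: the cocycle factor $\e^{iA_n(a_n)[b]}$ produced by the composition $\tau^{A_n}_{-b}\circ \tau^{A_n}_{a_n}$ need not converge, because $\abs{a_n}$ is not controlled a priori and may diverge. This is precisely why the statement allows $\tilde\theta_n$ to depend on $n$: a single constant phase cannot simultaneously absorb the rotation $\e^{i\theta}$ inherited from the uniqueness of the limit groundstate and the $n$-dependent cocycle, so the correct normalisation is $\tilde\theta_n = -\theta - A_n(a_n)[b]$. A minor technical point is to upgrade the locally uniform decay of $\e^{iA_n(b)[\,\cdot\,]} - 1$ to an $L^2$-compatible statement, which relies on the uniform bound by $2$ and a standard application of the dominated convergence theorem.
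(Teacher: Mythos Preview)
Your proof is correct and follows essentially the same route as the paper: apply Proposition~\ref{propositionContinuityGroundState} to extract a limiting groundstate, identify it with $U$ via Proposition~\ref{propositionDiamagneticUniqueness}, then use the composition law \eqref{eq:composition-translations} together with dominated convergence on the phase factor $\e^{iA_n(b)[\cdot]}$ to adjust the translation and phase. The only cosmetic difference is that the paper explicitly upgrades from a subsequence to the full sequence at the end (if the claim failed along some subsequence, re-running the argument on that subsequence would contradict it), whereas you note the subsequence suffices for the enclosing contradiction argument; both are fine.
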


\begin{proofclaim}
By Proposition~\ref{propositionContinuityGroundState}, there exist a sequence $(b_n)_{n \in \N}$ in $\mathbb{R}^N$, a subsequence $(n_\ell)_{\ell \in \mathbb{N}}$ in $\mathbb{N}$ and a function $V \in H^1(\mathbb{R}^N, \mathbb{C})$ such that $\tau^{A_{n_\ell}}_{b_{n_\ell}} u_{n_\ell} \rightarrow V$ and $D_{A_{n_\ell}} (\tau^{A_{n_\ell}}_{b_{n_\ell}} u_{n_\ell}) \rightarrow DV$ strongly in $L^2(\mathbb{R}^N)$. Because of the uniqueness up to translations and rotations in $\mathbb{C}$ of the solution of \eqref{eq:limitproblem}, there exist $b \in \mathbb{R}^N$ and $\omega \in \mathbb{R}$ such that $V = \e^{i \omega} \tau_{b}^0 U$. We can therefore write that
\begin{multline*}
\int_{\mathbb{R}^N} | D_{A_{n_\ell}} (\tau^{A_{n_\ell}}_{b_{n_\ell}} u_{n_\ell}) - DV |^2 = 
\int_{\mathbb{R}^N} | \e^{- i \omega} \tau^{A_{n_\ell}}_{-b} D_{A_{n_\ell}} (\tau^{A_{n_\ell}}_{b_{n_\ell}} u_{n_\ell}) - \tau^{A_{n_\ell}}_{-b} \tau^{0}_{b} DU |^2 = \\
\int_{\mathbb{R}^N} | \e^{-i \omega} \e^{- i A_{n_\ell}(b_{n_\ell}) [b] } D_{A_{n_\ell}} (\tau^{A_{n_\ell}}_{b_{n_\ell}-b} u_{n_\ell}) - DU + DU - \tau^{A_{n_\ell}}_{-b} \tau^0_b U |^2 \rightarrow 0,
\end{multline*}
as \(\ell \to + \infty\).
Here, we used the commutation between the translation and the connexion and \eqref{eq:composition-translations}. Moreover, by using Lebesgue dominated convergence, we have that
\begin{equation*}
\int_{\mathbb{R}^N} | DU - \tau^{A_{n_\ell}}_{-b} \tau^0_{b} DU |^2 \rightarrow 0, \quad \text{ as } \ell \to + \infty.
\end{equation*}
By the triangle inequality, we infer that
\begin{equation*}
\int_{\mathbb{R}^N} |  \e^{-i \omega} \e^{- i A_{n_\ell}(b_{n_\ell}) [b] } D_{A_{n_\ell}} (\tau^{A_{n_\ell}}_{b_{n_\ell}-b} u_{n_\ell}) - DU |^2 \rightarrow 0, \quad \text{ as } \ell \to + \infty,
\end{equation*}
and proceeding exactly in the same way, we obtain
\begin{equation*}
\int_{\mathbb{R}^N} | \e^{-i \omega} \e^{- i A_{n_\ell}(b_{n_\ell}) [b] } \tau^{A_{n_\ell}}_{b_{n_\ell}-b} u_{n_\ell} - U |^2 \rightarrow 0, \quad \text{ as } \ell \to + \infty.
\end{equation*}
Setting $\Tilde{\theta}_{n_\ell} = -\omega - A_{n_\ell}(b_{n_\ell}) [ b] $ and $\Tilde{a}_{n_\ell} = b_{n_\ell}-b$, the conclusion of the claim follows for this subsequence.

The claim is then true for the whole sequence $n$. Indeed, if it is not the case, we would find a subsequence $n_\ell$ for which the Claim does not hold, leading to a contradiction.
\end{proofclaim}

\begin{claim} \label{claim:ortogonality}
There exist sequences $(\theta_n)_{n \in \N}$ in $\R$ and $(a_n)_{n \in \N}$ in $\R^N$ such that
\[
  \lim_{n \to \infty} \int_{\R^N} \abs{D_{A_n} (\e^{i \theta_n} \tau^{A_n}_{a_n} u_n) -  D U}^2 + \abs{\e^{i \theta_n} \tau^{A_n}_{a_n} u_n - U}^2 = 0.
\]
Moreover, when $n \in \N$ is large enough, for every $w\in \R^N$, we have the following orthogonality relations
\begin{gather*}
  \int_{\R^N} \bigscalprod{D_{A_n} (\e^{i \theta_n} \tau^{A_n}_{a_n} u_n)}{D(D U[w])} + \scalprod{\e^{i \theta_n} \tau^{A_n}_{a_n} u_n}{DU [w]} = 0,\\
  \int_{\R^N} \bigscalprod{D_{A_n} (\e^{i \theta_n} \tau^{A_n}_{a_n} u_n)}{D i U} + \scalprod{\e^{i\theta_n} \tau^{A_n}_{a_n} u_n}{i U} = 0.
\end{gather*}
\end{claim}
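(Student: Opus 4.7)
The plan is to apply a finite-dimensional inverse function theorem to a map whose zeros encode the two orthogonality conditions. Starting from \(\Tilde v_n \defeq \e^{i \Tilde \theta_n} \tau^{A_n}_{\Tilde a_n} u_n\) as in Claim~\ref{claim:convergence-forte}, and up to absorbing a complex phase into \(\Tilde\theta_n\), I may assume that \(U\) is the real positive radial groundstate of \eqref{eq:limitproblem}. I then look for a small correction \((\delta\theta_n, \delta a_n) \to (0, 0)\) for which \(\e^{i\delta\theta_n} \tau^{A_n}_{\delta a_n} \Tilde v_n\) satisfies the two orthogonality conditions, and reconstruct the final parameters \((\theta_n, a_n)\) from \((\Tilde\theta_n, \Tilde a_n)\) and \((\delta\theta_n, \delta a_n)\) via the composition formula \eqref{eq:composition-translations}; because \((\delta\theta_n, \delta a_n) \to (0, 0)\), the mixed strong convergence asserted in the first part of the claim is inherited from Claim~\ref{claim:convergence-forte}.

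Concretely, I define \(\Phi_n \colon \R \times \R^N \to \R^{N + 1}\) by
\begin{align*}
 \Phi_n^0 (\theta, a) & \defeq \int_{\R^N} \bigscalprod{D_{A_n} (\e^{i \theta} \tau^{A_n}_a \Tilde v_n)}{D (i U)} + \scalprod{\e^{i \theta} \tau^{A_n}_a \Tilde v_n}{i U}, \\
 \Phi_n^k (\theta, a) & \defeq \int_{\R^N} \bigscalprod{D_{A_n} (\e^{i \theta} \tau^{A_n}_a \Tilde v_n)}{D (\partial_k U)} + \scalprod{\e^{i \theta} \tau^{A_n}_a \Tilde v_n}{\partial_k U},
\end{align*}
for \(k = 1, \dotsc, N\), so that the orthogonality conditions become \(\Phi_n (\delta\theta_n, \delta a_n) = 0\). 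A standard quantitative inverse function theorem in finite dimensions then supplies such a zero near the origin as soon as \(\Phi_n (0, 0) \to 0\) and \(D \Phi_n (0, 0)\) converges to an invertible matrix.

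The value \(\Phi_n (0, 0) \to 0\) follows from Claim~\ref{claim:convergence-forte}: since \(U\) is real while \(i U\) and \(D (i U)\) are purely imaginary, the limit \(\Phi_\infty^0 (0, 0) = 0\), and \(\Phi_\infty^k (0, 0) = \tfrac{1}{2} \int_{\R^N} \partial_k (\abs{D U}^2 + U^2) = 0\) by integration by parts, using the exponential decay of \(U\). For the Jacobian, performing the change of variables \(y = x - a\) in the integrals, using the skew-symmetry \(A_n (a)[a] = 0\), and differentiating in \((\theta, a)\) shifts the extra spatial derivatives onto the smooth exponentially decaying test functions \(i U\) and \(\partial_k U\), leaving only \(\Tilde v_n\), \(D_{A_n} \Tilde v_n\) and a factor \(A_n (e_l)[\cdot]\) coming from the variation of the phase. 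Letting \(n \to \infty\) yields
\[
 D \Phi_\infty (0, 0) = \begin{pmatrix} \norm{U}_{H^1}^2 & 0 \\ 0 & - \bigl( \scalprod{\partial_l U}{\partial_k U}_{H^1} \bigr)_{1 \le l, k \le N} \end{pmatrix}.
\]
The block structure comes from the real/imaginary orthogonality, while the lower-right block is the negative of the Gram matrix of the linearly independent family \(\{\partial_1 U, \dotsc, \partial_N U\}\) (Proposition~\ref{propositionNonDegeneracyWithoutMagneticField}); the whole matrix is therefore invertible.

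The main obstacle is handling the terms \(\int_{\R^N} A_n (e_l)[y] f_n (y) g (y) \dif y\) arising in the Jacobian, where \(f_n\) is uniformly bounded in \(L^2 (\R^N)\) (namely \(\Tilde v_n\) or \(D_{A_n} \Tilde v_n\)) and \(g\) is a smooth exponentially decaying test function built from \(U\). The linear growth bound \(\abs{A_n (e_l)[y]} \le \abs{A_n} \abs{y}\) together with \(\abs{\cdot} g \in L^2 (\R^N)\) and \(\abs{A_n} \to 0\) make such terms vanish by the Cauchy--Schwarz inequality; the remaining terms reduce to \(H^1\)-inner products that converge by virtue of the strong \(L^2\)-convergences supplied by Claim~\ref{claim:convergence-forte}.
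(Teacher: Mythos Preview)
Your approach is essentially the paper's: both define the same finite-dimensional map \(\Phi_n : \R^{N+1} \to \R^{N+1}\) encoding the orthogonality constraints, show that it converges to a limit map \(\Phi\) with \(\Phi(0,0)=0\) and invertible derivative at the origin, and conclude that \(\Phi_n\) has a zero \((\delta\theta_n,\delta a_n)\to(0,0)\) for large \(n\), after which the final parameters are reconstructed via \eqref{eq:composition-translations}. The only technical difference is that the paper closes with the stability of the Brouwer degree under uniform \(C^0\)-perturbation (so only \(\Phi_n\to\Phi\) on compacts is needed), whereas you invoke a quantitative inverse function theorem, which in addition to \(\Phi_n(0,0)\to 0\) and \(D\Phi_n(0,0)\to D\Phi_\infty(0,0)\) requires a uniform Lipschitz bound on \(D\Phi_n\) in a fixed ball around the origin. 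You should state this bound explicitly; it follows from the same change-of-variables computation you describe, since the extra derivatives land on the exponentially decaying test functions built from \(U\) and the remaining factors (\(\Tilde v_n\), \(D_{A_n}\Tilde v_n\), and \(A_n(e_l)[\cdot]\)) are controlled uniformly in \(n\) on any bounded \((\theta,a)\)-set.
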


\begin{proofclaim}
We already proved Claim~\ref{claim:convergence-forte} with $\Tilde{\theta}_n$ and $\Tilde{a}_n$. Let us first prove the two orthogonality relations. For this, we define the map $\Phi_n \in C (\R^{N + 1}, \R^{N + 1})$ for each $(x, \tau) \in \R^{N + 1}$ and  $(w, s) \in \R^{N + 1}$ by the following scalar product
\[
\begin{split}
  \bigscalprod{(w, s)}{&\Phi_n (x, \tau)}\\
  =& \int_{\R^N} \bigscalprod{D_{A_n} (\e^{i (\Tilde{\theta}_n + \tau)} \tau^{A_n}_{x} \tau^{A_n}_{\Tilde{a}_n} u_n)}{D (DU[w])} + \bigscalprod{\e^{i (\Tilde{\theta}_n + \tau)} \tau^{A_n}_{x} \tau^{A_n}_{\Tilde{a}_n} u_n}{DU[w]}\\
  &+ \int_{\R^N} \bigscalprod{D_{A_n} (\e^{i (\Tilde{\theta}_n + \tau)} \tau^{A_n}_{x} \tau^{A_n}_{\Tilde{a}_n} u_n)}{s (D i U)} + \bigscalprod{\e^{i (\Tilde{\theta}_n + \tau)} \tau^{A_n}_{x} \tau^{A_n}_{\Tilde{a}_n} u_n}{s i U}.
\end{split}
\]
Since $D_{A_n} \circ \tau^{A_n}_{x} \tau^{A_n}_{\Tilde{a}_n}= \tau^{A_n}_{x} \tau^{A_n}_{\Tilde{a}_n} \circ D_{A_n}$, and thanks to the convergence proved in Claim~\ref{claim:convergence-forte}, the sequence $(\Phi_n)_{n \in \N}$ converges to $\Phi$ uniformly over compact subsets, where the function $\Phi \in C (\R^{N + 1}, \R^{N + 1})$ is defined for every $(x, \tau) \in \R^{N + 1}$ and  $(w, s) \in \R^{N + 1}$ by
\[
\begin{split}
  \bigscalprod{(w, s)}{\Phi (x, \tau)}
  = \int_{\R^N} \bigscalprod{D (\e^{i \tau} \tau^{0}_{x} U)}{D (DU[w])} + \bigscalprod{\e^{i \tau} \tau^{0}_{x} U}{DU[w]}\\
  + \int_{\R^N} \bigscalprod{D (\e^{i \tau} \tau^{0}_{x} U)}{s (D i U)} + \bigscalprod{\e^{i \tau} \tau^{0}_{x} U}{s i U}.
\end{split}  
\]
We first remark that $\Phi(0,0) = 0$. This is due to the fact that $D U[w] + s i U$ belongs to the tangent space of $U$, see \eqref{eq:Orthogonality}. Next, observe now that 
\begin{multline*}
  \bigscalprod{(w, s)}{D \Phi (0, 0) [z, r]}
  = \int_{\R^N} \bigscalprod{D (D u[z])}{D (DU[w])} + \bigscalprod{DU[z]}{DU[w]}\\
  + \int_{\R^N} \bigscalprod{r (D i U)}{s (D i U)} + \bigscalprod{r i U}{s i U},
\end{multline*}
meaning that $D \Phi(0,0) \geq 0$. Therefore, for every small $\rho > 0$, the Brouwer topological degree $\deg (\Phi, B_\rho, 0)$ of $\Phi$ on $B_\rho$ with respect to $0$ is well-defined, and $\deg (\Phi, B_\rho, 0) = 1$. Hence, since we have the uniform convergence on compacts of the continuous functions $\Phi_n$, for $n$ large enough, we obtain that $\deg (\Phi_n, B_\rho, 0) = 1$. We conclude to the existence of a sequence $(x_n, \tau_n)$ such that $\Phi_n (x_n, \tau_n) = 0$ for every $n$ large enough, and $(x_n, \tau_n) \to (0, 0)$ as $n \to \infty$. Finally, setting $a_n = x_n + \Tilde{a}_n$ and $\theta_n = \Tilde{\theta}_n + \tau_n + i A (\Tilde{a}_n)[x_n]$, we reach the conclusion in view of the composition formula for magnetic translations \eqref{eq:composition-translations}, and using again the Lebesgue dominated convergence.
\end{proofclaim}

\medbreak
Applying the first two claims to the sequence $(v_n)_n$ and renaming $\Tilde{u}_n = \e^{i \theta_n} \tau^{A_n}_{a_n} u_n$ and $\Tilde{v}_n = \e^{i \varphi_n} \tau^{A_n}_{c_n} v_n$ (where the couple $(\varphi_n, c_n) \in \mathbb{R}^{N+1}$ is given by the claims), we can assume that $\Tilde{u}_n$ satisfied
\[
  \lim_{n \to \infty} \int_{\R^N} \abs{D_{A_n} \Tilde{u}_n -  D U}^2 + \abs{\Tilde{u}_n- U}^2 = 0,
\]
and for every $w\in \R^N$,
\begin{gather*}
  \int_{\R^N} \scalprod{D_{A_n} \Tilde{u}_n}{D( D U[w])} + \scalprod{\Tilde{u}_n}{DU[w]} = 0, \\
  \int_{\R^N} \scalprod{D_{A_n} \Tilde{u}_n}{D i U} + \scalprod{ \Tilde{u}_n}{iU} = 0,
\end{gather*}
and the same for $\Tilde{v}_n$.

\begin{claim} \label{claim:tangent}
There exists $W_n \in L^q (\R^N, \Lin(\C, \C))$ such that 
\[
  -\Delta_{A_n} (\Tilde{u}_n - \Tilde{v}_n) + (\Tilde{u}_n - \Tilde{v}_n) = W_n [\Tilde{u}_n - \Tilde{v}_n] \qquad \text{in \(\R^N\)},
\]
and 
\[
  W_n \to \abs{U}^{p - 2} + (p - 2) \abs{U}^{p - 4} U \otimes U  
\]
in $L^{q} (\R^N)$ for every $2 \leq q (p - 2) \leq \frac{2N}{N-2}$.
\end{claim}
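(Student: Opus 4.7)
The plan is to derive the equation for $\Tilde{u}_n - \Tilde{v}_n$ by subtracting the two magnetic Schrödinger equations and then rewriting the difference of nonlinearities via the fundamental theorem of calculus. Since both $\Tilde{u}_n$ and $\Tilde{v}_n$ are obtained from $u_n$ and $v_n$ by a magnetic translation and a multiplication by a unit complex number, they remain solutions of \eqref{eqNLSEMag} with magnetic potential $A_n$, so
\[
-\Delta_{A_n} \Tilde{u}_n + \Tilde{u}_n = \abs{\Tilde{u}_n}^{p-2} \Tilde{u}_n, \qquad -\Delta_{A_n} \Tilde{v}_n + \Tilde{v}_n = \abs{\Tilde{v}_n}^{p-2} \Tilde{v}_n.
\]
Subtracting these two equations reduces the problem to expressing $\abs{\Tilde{u}_n}^{p-2}\Tilde{u}_n - \abs{\Tilde{v}_n}^{p-2} \Tilde{v}_n$ as $W_n[\Tilde{u}_n - \Tilde{v}_n]$ for some $\R$-linear, pointwise-defined map $W_n(x) \in \Lin(\C,\C)$.

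The natural candidate comes from writing $f(z) \defeq \abs{z}^{p-2} z$ and computing its $\R$-differential $Df(z)[w] = \abs{z}^{p-2} w + (p-2) \abs{z}^{p-4} \scalprod{z}{w} z$. By the fundamental theorem of calculus,
\[
\abs{\Tilde{u}_n}^{p-2}\Tilde{u}_n - \abs{\Tilde{v}_n}^{p-2}\Tilde{v}_n = \Bigl( \int_0^1 Df\bigl( \Tilde{v}_n + t(\Tilde{u}_n - \Tilde{v}_n) \bigr)\, dt \Bigr)[\Tilde{u}_n - \Tilde{v}_n],
\]
so that I would set
\[
W_n(x) \defeq \int_0^1 \Bigl( \abs{z_{n,t}(x)}^{p-2} \id_\C + (p-2) \abs{z_{n,t}(x)}^{p-4} z_{n,t}(x) \otimes z_{n,t}(x) \Bigr) dt,
\]
with $z_{n,t}(x) = \Tilde{v}_n(x) + t(\Tilde{u}_n(x) - \Tilde{v}_n(x))$ and the tensor $\otimes$ defined as in \eqref{eq:otimes}. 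This produces the desired linear equation for $\Tilde{u}_n - \Tilde{v}_n$ pointwise, and by the density of smooth test functions in $H^1_{A_n}(\R^N,\C)$ the identity also holds in the weak (distributional magnetic) sense.

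For the convergence of $W_n$ it suffices to observe that $\abs{W_n} \le C\bigl(\abs{\Tilde{u}_n}^{p-2} + \abs{\Tilde{v}_n}^{p-2}\bigr)$ and similarly for the limit $W \defeq \abs{U}^{p-2} + (p-2)\abs{U}^{p-4} U \otimes U$. From Claim~\ref{claim:convergence-forte} (combined with Lemma~\ref{lemmaSobolev}, which transports strong convergence across the magnetic Sobolev spaces into strong convergence in every $L^r$ with $r \in [2, \tfrac{2N}{N-2}]$) we obtain $\Tilde{u}_n, \Tilde{v}_n \to U$ strongly in $L^{q(p-2)}(\R^N)$ for every $q$ in the stated range. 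Since $z \mapsto \abs{z}^{p-2} \id_\C + (p-2) \abs{z}^{p-4} z \otimes z$ is continuous from $\C$ into $\Lin(\C,\C)$ with $(p-2)$-homogeneous growth, a standard Krasnosel'ski\u{\i} type continuity result for Nemytskii operators then gives $W_n \to W$ in $L^q(\R^N, \Lin(\C,\C))$ for every $q$ such that $2 \le q(p-2) \le \tfrac{2N}{N-2}$. The main obstacle, and the only genuinely delicate point, is to carry this continuity argument under the integral over $t \in [0,1]$ and to check that the endpoint cases of the exponent range are handled correctly by dominated convergence; this is handled by bounding $W_n$ uniformly in $t$ and in $n$ by $\abs{\Tilde{u}_n}^{p-2}+\abs{\Tilde{v}_n}^{p-2}$ which converges in the relevant $L^q$ norm.
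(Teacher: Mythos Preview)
Your proposal is correct and follows essentially the same approach as the paper: define \(W_n\) as the integral of \(Df\) along the segment joining \(\Tilde{u}_n\) and \(\Tilde{v}_n\) via the fundamental theorem of calculus, then use Claim~\ref{claim:convergence-forte} together with Lemma~\ref{lemmaSobolev} to obtain strong convergence of \(\Tilde{u}_n,\Tilde{v}_n\) to \(U\) in \(L^{q(p-2)}(\R^N)\) and conclude \(W_n \to Df(U)\) in \(L^q\). Your write-up is in fact more explicit than the paper's on the Nemytskii-type continuity and the dominated-convergence justification for passing under the \(t\)-integral.
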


We recall that the tensor product $\otimes$ has been defined in \eqref{eq:otimes}.

\begin{proofclaim}
We define $W_n : \R^N \to \Lin(\C, \C)$ by
\[
  \scalprod{w}{W_n[z]} = \int_0^1 D f \bigl((1 - t)\Tilde{u}_n + t \Tilde{v}_n\bigr)[w, z]\dif t,
\]
for $f(u) = \abs{u}^{p - 2} u$. Claim~\ref{claim:convergence-forte} and Lemma~\ref{lemmaSobolev} imply that $\Tilde{u}_n \rightarrow U$ and $\Tilde{v}_n \rightarrow U$ in $L^{q (p - 2)}(\mathbb{R}^N)$, for $2 \leq q (p - 2) \leq \frac{2N}{N-2}$. Then, it is clear that $W_n \in L^q (\R^N)$ and $W_n \rightarrow D f (U) = \abs{U}^{p - 2} + (p - 2) \abs{U}^{p - 4} U \otimes U$ in $L^q (\R^N)$ as $n \to \infty$.
\end{proofclaim}

\noindent \textbf{Conclusion} The compact operator defined by $L_n = (-\Delta_{A_n} + 1)^{-1} W_n$ enters in the hypothesis of Proposition~\ref{propositionSpectrum}. We know that the spectrum converges, i.e., $\lambda_k (L_n) \rightarrow \lambda_k (L)$. Moreover, since the limit equation is \eqref{eq:NonDegeneracy}, we also know that $\lambda_1(L) = p-1 > 1$, $\lambda_i (L) = 1$, for $i=2, \dotsc, N+2$, and $\lambda_i (L) < 1$, for $i \geq N+3$. 

We define the orthogonal projection operator $P^+_n$ on the first eigenvector, $P^0_n$ the projection on the eigenspace $E^0_n$ made by the $N + 1$ following eigenvectors, and $P^-_n = I - P^+_n - P^0_n$. We observe that $L_n$ commutes with $P^-_n$ and $P^+_n$ and $L_n (\Tilde{u}_n - \Tilde{v}_n) = \Tilde{u}_n - \Tilde{v}_n$. Moreover,
\[
\begin{split}
  \norm{P^n_+ (\Tilde{u}_n - \Tilde{v}_n)}^2_{H^1_{A_n} (\R^N, \C)} 
  &=\scalprod{P^+_n (\Tilde{u}_n - \Tilde{v}_n)}{P^+_n L_n(\Tilde{u}_n - \Tilde{v}_n)}_{H^1_{A_n} (\R^N, \C)}\\
  &=\scalprod{P^+_n (\Tilde{u}_n - \Tilde{v}_n)}{L_n P^+_n (\Tilde{u}_n - \Tilde{v}_n)}_{H^1_{A_n} (\R^N, \C)}\\
  &= \lambda_1 (L_n) \bignorm{P^+_n (\Tilde{u}_n - \Tilde{v}_n)}^2_{H^1_{A_n} (\R^N, \C)}.
\end{split}
\]
Then, since $\lim_{n \to + \infty} \lambda_1 (L_n) > 1$, $P^+_n (\Tilde{u}_n - \Tilde{v}_n) = 0$ for $n$ large enough. Similarly,
\[
\begin{split}
  \norm{P^-_n (\Tilde{u}_n - \Tilde{v}_n)}^2_{H^1_{A_n} (\R^N, \C)} 
  &=\scalprod{P^-_n (\Tilde{u}_n - \Tilde{v}_n)}{P^-_n L_n(\Tilde{u}_n - \Tilde{v}_n)}_{H^1_{A_n} (\R^N, \C)}\\
  &=\scalprod{P^-_n (\Tilde{u}_n - \Tilde{v}_n)}{L_n P^-_n (\Tilde{u}_n - \Tilde{v}_n)}_{H^1_{A_n} (\R^N, \C)}\\
  &\le \lambda_{N + 3}(L_n) \norm{P^-_n (\Tilde{u}_n - \Tilde{v}_n)}^2_{H^1_{A_n} (\R^N, \C)}.
\end{split}
\]
Thus, since $\lim_{n \to \infty} \lambda_{N + 3} (L_n) < 1$, $P^-_n (\Tilde{u}_n - \Tilde{v}_n) = 0$ for $n$ large enough.
Assume now by contradiction that, for every $n \in \N$, $\Tilde{u}_n \ne \Tilde{v}_n$. Then, the function
\[
  z_n = \frac{\Tilde{u}_n - \Tilde{v}_n}{\norm{\Tilde{u}_n-\Tilde{v}_n}_{H^1_{A_n} (\R^N, \C)}}
\]
is in the eigenspace $E_n^0$ and is a linear combination of eigenvectors. By Proposition~\ref{propositionSpectrum}, there exists $z = DU[w] + \lambda i U \in E^0$, where $E^0$ is the eigenspace of $L$ corresponding the eigenvalue $1$ (see \S \ref{subsec:groundstate0}), such that, up to a subsequence still denoted by $n$, 
\[
  \lim_{n \to \infty} \int_{\R^N} \abs{D_{A_n} z_{n} - D z}^2 + \abs{z_{n} - z}^2 = 0.
\]
By Claim~\ref{claim:ortogonality}, we also know that
\[
  \int_{\R^N} \scalprod{D_{A_n} z_n}{D z} + \scalprod{z_n}{z},
\]
for $n$ large enough. Finally
\begin{equation*}
\begin{split}
 \| \Tilde{u}_n - \Tilde{v}_n \|^2_{H^1_{A_n}(\mathbb{R}^N, \mathbb{C})} & = \int_{\mathbb{R}^N} \scalprod{ D_{A_n}(\Tilde{u}_n - \Tilde{v}_n)}{D_{A_n} (\Tilde{u}_n-\Tilde{v}_n)} + \scalprod{\Tilde{u}_n-\Tilde{v}_n}{\Tilde{u}_n-\Tilde{v}_n} \\
  & = \| \Tilde{u}_n - \Tilde{v}_n \|^2_{H^1_{A_n}(\mathbb{R}^N, \mathbb{C})} \int_{\mathbb{R}^N} \scalprod{ D_{A_n} z_n}{ D_{A_n} z_n} + \scalprod{ z_n }{z_n} \\
  & = \| \Tilde{u}_n - \Tilde{v}_n \|^2_{H^1_{A_n}(\mathbb{R}^N, \mathbb{C})} \int_{\mathbb{R}^N} \scalprod{ D_{A_n} z_n - D z}{ D_{A_n} z_n} + \scalprod{ z_n -z }{z_n} \\
  & \leq \| \Tilde{u}_n - \Tilde{v}_n \|^2_{H^1_{A_n}(\mathbb{R}^N, \mathbb{C})} \int_{\mathbb{R}^N} \abs{D_{A_{n}} z_{n} - D z}^2 + \abs{z_{n} - z}^2.
\end{split}
\end{equation*}
This is impossible. Then, $\Tilde{u}_n = \Tilde{v}_n$ for $n$ large.
\end{proof}

\subsection{Non-degeneracy for small magnetic fields}
As an application of the method to prove Theorem~\ref{theoremUniqueness} on the essential uniqueness of groundstates, we also obtain nondegeneracy of groundstates.

\begin{proposition}[Non-degeneracy for small magnetic fields]
\label{propositionNonDegeneracyA}
For every \(N \ge 2\) and \(p \in (2, \frac{2N}{N - 2})\), there exists \(\varepsilon > 0\) (given in Theorem~\ref{theoremUniqueness}) such that if \(\abs{dA} \le \varepsilon\), $u$ is a solution of \eqref{eqNLSEMag} with \(\mathcal{I}_A (u) \le \mathcal{E} (0)+\varepsilon\) and if $w \in H^1_A (\R^N,\C)$ satisfies
\begin{equation*}
 -\Delta_{A} w + w = \abs{u}^{p - 2} w + (p - 2) \abs{u}^{p - 4} \scalprod{u}{w} u, 
\end{equation*}
then there exist $y \in \R^N$ and $\lambda \in \R$ such that 
\begin{equation*}
  w = D_A u[y] + \lambda i u. 
\end{equation*}
\end{proposition}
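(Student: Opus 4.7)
The strategy is to argue by contradiction in the spirit of the proof of Theorem~\ref{theoremUniqueness}, combining the spectral convergence provided by Proposition~\ref{propositionSpectrum} with the non-degeneracy of the unperturbed limit recalled in Proposition~\ref{propositionNonDegeneracyWithoutMagneticField}. Assuming the statement fails, one would obtain sequences \((A_n)\) in \(\Lin(\R^N, \bigwedge^1 \R^N)\) with \(\abs{dA_n}\to 0\), solutions \(u_n\) of \eqref{eqNLSEMag} satisfying \(\mathcal{I}_{A_n}(u_n) \to \mathcal{E}(0)\), and \(w_n \in H^1_{A_n}(\R^N,\C)\) solving the linearised equation but not representable as \(D_{A_n} u_n[y] + \lambda i u_n\). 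After subtracting the \(H^1_{A_n}\)-orthogonal projection of \(w_n\) onto the \((N+1)\)-dimensional subspace \(T_n \defeq \{D_{A_n} u_n[y] + \lambda i u_n : (y,\lambda)\in\R^N\times\R\}\) (which still solves the linearised equation by the invariances of \eqref{eqNLSEMag}) and renormalising, one may further assume \(\norm{w_n}_{H^1_{A_n}(\R^N,\C)} = 1\) and \(w_n \perp T_n\).

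Next, Proposition~\ref{propositionContinuityGroundState}, combined with Theorem~\ref{theoremUniqueness} and the uniqueness up to symmetries in the limit (Proposition~\ref{propositionDiamagneticUniqueness}), allows one to adjust the \(u_n\) by magnetic translations \(\tau^{A_n}_{a_n}\) and complex phases \(\e^{i\theta_n}\) so that, along a subsequence, \(u_n \to u_0\) and \(D_{A_n} u_n \to D u_0\) strongly in \(L^2(\R^N)\), where \(u_0\) is the real positive radial groundstate of \eqref{eq:limitproblem}. The linearised equation for \(w_n\) then reads \(L_n w_n = w_n\) for the compact self-adjoint operator
\[
  L_n v \defeq (-\Delta_{A_n} + 1)^{-1} \bigl( \abs{u_n}^{p-2} v + (p-2) \abs{u_n}^{p-4} \scalprod{u_n}{v} u_n \bigr),
\]
and \(T_n\) is an \((N+1)\)-dimensional subspace of the eigenspace of \(L_n\) at the eigenvalue \(1\) produced by the two invariances of \eqref{eqNLSEMag}. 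The strong convergence across magnetic Sobolev spaces, together with the non-degeneracy of \(u_0\) recalled in Proposition~\ref{propositionNonDegeneracyWithoutMagneticField}, ensures that the generators of \(T_n\) remain linearly independent for \(n\) large, so that \(\dim T_n = N+1\).

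The decisive step is then to invoke Proposition~\ref{propositionSpectrum} with \(W_n = \abs{u_n}^{p-2} + (p-2) \abs{u_n}^{p-4} u_n \otimes u_n\), which converges in \(L^q\) for \(q\) in an appropriate range by Lemma~\ref{lemmaSobolev}, to conclude that \(\lambda_k(L_n) \to \lambda_k(L)\). Since \(\lambda_1(L) = p - 1 > 1\), \(\lambda_2(L) = \dotsb = \lambda_{N+2}(L) = 1\) and \(\lambda_{N+3}(L) < 1\), as recalled in \S\ref{subsec:groundstate0}, for \(n\) large one has \(\lambda_1(L_n) > 1\) and \(\lambda_{N+3}(L_n) < 1\). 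Combined with the lower bound \(\dim T_n = N+1\) on the multiplicity of \(1\) for \(L_n\), this forces the eigenspace of \(L_n\) at \(1\) to coincide \emph{exactly} with \(T_n\). But then \(L_n w_n = w_n\) places \(w_n\) in \(T_n\), contradicting both \(w_n \perp T_n\) and \(\norm{w_n}_{H^1_{A_n}} = 1\). The principal technical difficulty, as in the proof of Theorem~\ref{theoremUniqueness}, is that the Hilbert space \(H^1_{A_n}(\R^N,\C)\) varies with \(n\), so the orthogonal projection defining \(T_n^\perp\), the linear independence of the generators of \(T_n\), and the counting of the multiplicity at the eigenvalue \(1\) must all be handled cross-space using the tools of \S\ref{section:magneticspaces} and Proposition~\ref{propositionSpectrum} rather than in a fixed Hilbert setting.
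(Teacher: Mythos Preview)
Your proposal is correct and follows essentially the same route the paper indicates: a contradiction argument combining Proposition~\ref{propositionContinuityGroundState} (to reduce to the limit groundstate \(u_0\)), Proposition~\ref{propositionSpectrum} (for spectral convergence of \(L_n\)), and Proposition~\ref{propositionNonDegeneracyWithoutMagneticField} (for the kernel structure at \(A=0\)), exactly as in the proof of Theorem~\ref{theoremUniqueness}. Your endgame---counting that \(\lambda_1(L_n)>1\), \(\lambda_{N+3}(L_n)<1\) and \(\dim T_n = N+1\) force the eigenspace of \(L_n\) at \(1\) to equal \(T_n\)---is a slightly more direct variant of the projection argument used at the end of the proof of Theorem~\ref{theoremUniqueness}, but the substance is the same.
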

\begin{proof}
This follows from Proposition~\ref{propositionNonDegeneracyWithoutMagneticField}, Lemma~\ref{lemmaPalaisSmale}, Proposition~\ref{propositionContinuityGroundState} and Proposition~\ref{propositionSpectrum}, with arguments similar to those in the proof of Theorem~\ref{theoremUniqueness}.
\end{proof}

\section{Symmetry of solutions} \label{section:symmetryof}

\subsection{Invariance under the rotations that preserve the magnetic field} \label{sec:symmetry}

In this section, we use the uniqueness up to magnetic translations and rotations in $\mathbb{C}$ proved in Theorem~\ref{theoremUniqueness} to deduce symmetry properties of the groundstate of \eqref{eqNLSEMag}.

\begin{proposition}%
[Symmetry of groundstates with vanishing center-of-mass]
\label{prop:symmetric-solution}%
Assume that $A \in \Lin( \mathbb{R}^N, \bigwedge^1 \mathbb{R}^N)$ is skew-symmetric and that $\abs{\mathrm{d}A} < \varepsilon$, where $\varepsilon > 0$ is given in Theorem~\ref{theoremUniqueness}. If $u$ is a groundstate solution of \eqref{eqNLSEMag} and if 
\begin{equation} \label{eq:moment}
  \int_{\R^N} x \abs{u (x)}^2 \dif x = 0,
\end{equation}
then for every linear isometry $R : \R^N\to \R^N$ such that $R_\# A = A$, we have
\[
  u \circ R = u.
\]
\end{proposition}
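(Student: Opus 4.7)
My plan is to apply Theorem~\ref{theoremUniqueness} to the pair $(u,u\circ R)$, pin down the translation parameter using the vanishing first-moment hypothesis \eqref{eq:moment}, and then kill the residual complex phase using that a linear isometry fixes the origin together with the pointwise positivity of $\abs{u}$.

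First I would verify that $u\circ R$ is a second groundstate of \eqref{eqNLSEMag}. The hypothesis $R_\#A=A$ reads $A(Rx)[Rv]=A(x)[v]$, so a direct computation (analogous to the one in the proof of Lemma~\ref{lemma:propertyGSE}) gives $D_A(u\circ R)(x)=R_\#(D_Au)(x)$; as $R$ is an isometry, pointwise $\abs{D_A(u\circ R)}=\abs{D_Au}\circ R$ and hence, after the change of variables $y=Rx$, $\mathcal{I}_A(u\circ R)=\mathcal{I}_A(u)=\mathcal{E}(dA)$, and $u\circ R$ still solves \eqref{eqNLSEMag}. By Proposition~\ref{propositionContinuityGroundState}, $\mathcal{E}(dA)\le\mathcal{E}(0)+\varepsilon$ once $\abs{dA}$ is sufficiently small, so Theorem~\ref{theoremUniqueness} applies to $u$ and $u\circ R$ and produces $a=a(R)\in\R^N$ and $\theta=\theta(R)\in\R$ with
\[
 u\circ R=\e^{i\theta(R)}\,\tau^A_{a(R)}u.
\]

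Next I would identify $a(R)$. Taking absolute values yields $\abs{u(Rx)}=\abs{u(x-a(R))}$ on $\R^N$. The change of variables $y=Rx$ together with $R(0)=0$ and the hypothesis \eqref{eq:moment} gives
\[
 \int_{\R^N}x\,\abs{u(Rx)}^2\dif x=R^{-1}\!\int_{\R^N}y\,\abs{u(y)}^2\dif y=0,
\]
whereas the change of variables $y=x-a(R)$ gives
\[
 \int_{\R^N}x\,\abs{u(x-a(R))}^2\dif x=a(R)\,\norm{u}_{L^2}^2.
\]
Since $u\not\equiv0$, this forces $a(R)=0$ and hence $u\circ R=\e^{i\theta(R)}u$.

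Finally, to kill $\theta(R)$, I would evaluate the last identity at the unique common fixed point of every linear map, namely the origin $R(0)=0$: this yields $u(0)=\e^{i\theta(R)}u(0)$. Standard elliptic regularity for \eqref{eqNLSEMag} gives $u\in C^2(\R^N,\C)$, and Kato's inequality combined with the strong maximum principle applied to the scalar semilinear equation satisfied by $\abs{u}$ (see the argument in the proof of Proposition~\ref{propositionDiamagneticUniqueness}) implies $\abs{u}>0$ on all of $\R^N$; in particular $u(0)\ne0$ and therefore $\e^{i\theta(R)}=1$, i.e.\ $u\circ R=u$.

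The bulk of the work is of course packaged inside Theorem~\ref{theoremUniqueness}; once that is granted the reduction above is short. What I find to be the essential point — and the only really nontrivial ingredient of the final step — is that linearity of $R$ makes the origin a fixed point common to every admissible symmetry, while \eqref{eq:moment} simultaneously normalises $u$ so that this origin is also the ``centre'' of $\abs{u}^2$; this coincidence is exactly what turns the uniqueness-up-to-gauge statement into the pointwise equality $u\circ R=u$.
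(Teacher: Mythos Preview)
Your first two steps match the paper's proof almost verbatim: apply Theorem~\ref{theoremUniqueness} to the pair $(u,u\circ R)$, then use the vanishing moment \eqref{eq:moment} to force the translation parameter to be $0$.

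The difference is in the third step. You evaluate $u\circ R=\e^{i\theta}u$ at the origin and want $u(0)\ne 0$; the paper instead integrates against the positive radial groundstate $u_0$ of the nonmagnetic problem and uses Proposition~\ref{propositionContinuityGroundState} to ensure $\int_{\R^N}u_0\,u\ne 0$ when $\abs{dA}$ is small. Your route would be slightly slicker \emph{if} $\abs{u(0)}>0$ were available, but your justification of that point has a gap. Kato's inequality only gives that $\abs{u}$ is a \emph{subsolution}, namely $-\Delta\abs{u}+\abs{u}\le\abs{u}^{p-1}$; the strong maximum principle yields strict positivity for nonnegative \emph{supersolutions}, not subsolutions, so you cannot conclude $\abs{u}>0$ this way. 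The reference to the proof of Proposition~\ref{propositionDiamagneticUniqueness} is misleading: there $A=0$ and one first shows that the diamagnetic inequality is saturated, so $\abs{u}$ solves the equation with equality, which is exactly what makes the strong maximum principle applicable. For $A\ne 0$ this equality need not hold a priori (indeed, for large fields magnetic groundstates can have zeros).

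A correct repair of your approach still has to invoke the smallness of the field: by Proposition~\ref{propositionContinuityGroundState} and elliptic regularity, the centred groundstate $u$ is uniformly close on compacts to some $\e^{i\varphi}u_0$, hence $\abs{u(0)}$ is close to $u_0(0)>0$. But once you use continuity of groundstates you are essentially back to the paper's argument, only pointwise at $0$ rather than in the integrated form $\int u_0\,u\ne 0$.
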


The moment condition \eqref{eq:moment} makes sense. Indeed by classical regularity estimates
\(u\) is continuous and tends to \(0\) at infinity. Indeed, since \(u\) is a solution to the 
equation \eqref{eqNLSEMag}, we have, by Kato's inequality,
\[
 -\Delta \abs{u} + \abs{u} \le \abs{u}^p,
\]
so that the function \(u\) decays exponentially at infinity and thus the integral in \eqref{eq:moment} converges absolutely.

\begin{proof}[Proof of Proposition~\ref{prop:symmetric-solution}]
Let $\varepsilon>0$ be given by Theorem~\ref{theoremUniqueness} so that uniqueness up to magnetic translations and rotations in $\C$ holds for the equation \eqref{eqNLSEMag}. Consider the function $v = u \circ R$. Since we have assumed that $R_\# A = A$, we deduce that $v$ is also a solution of \eqref{eqNLSEMag} and therefore from Theorem~\ref{theoremUniqueness} that $v (x) = \e^{i \theta} \tau^A_h u$, for some $\theta \in \mathbb{R}$ and $h \in \mathbb{R}^N$. By change of variables, we have 
\[
\begin{split}
  \int_{\R^N} x \abs{u (x)}^2\dif x &
   = \int_{\R^N} R (y) \abs{v (y)}^2\dif y = \int_{\R^N} R (y) \abs{u (y - h)}^2\dif y\\
   &= \int_{\R^N} R (z + h) \abs{u (z)}^2 \dif z = R \Bigl(\int_{\R^N} z \abs{u (z)}^2 \dif z \Bigr) + R (h) \int_{\R^N} \abs{u}^2.
\end{split}
\]
From this and using the assumption, we deduce that $R (h) = 0$, so that $h=0$ since $R$ is an isometry and therefore $u( R(x)) = \e^{i \theta} u(x)$.

Let $u_0$ be the positive and radial solution of \eqref{eq:limitproblem}. By a change of variables, we have
\[
  \int_{\R^N} u_0(x) \, u(x) \, \dif x  = \int_{\R^N} u_0( R y) \, u (R y) \, \dif y = \e^{i \theta} \int_{\R^N} u_0(y) u(y) \dif y.
\]
Moreover, taking $\varepsilon$ smaller if necessary, we infer from Proposition~\ref{propositionContinuityGroundState} that
\[
  \int_{\R^N} u_0 u \ne 0.
\]
This clearly implies that $\e^{i \theta} = 1$.
\end{proof}

\begin{remark} \label{rem:A}
In fact Proposition~\ref{prop:symmetric-solution} implies directly the seemingly stronger statement that for every linear isometry $R : \R^N\to \R^N$ such that $\abs{A \circ R}^2 = \abs{A}^2$ as quadratic forms, we have
\(u \circ R = u\).
This follows from the structure of the group \(G\) of isometries that satisfy the condition. We define the linear operator \(\Hat{A} \in \Lin (\Rset^N, \Rset^N)\) so that \(\Hat{A} (v) \cdot w = A (v)[w]\) ($\cdot$ being the standard scalar product in $\R^N$). We remark that $\Hat{A}$ is in fact an antisymmetric matrix since $A$ is skew-symmetric.
The operator \(-\Hat{A}^2\) is self-adjoint and semi-definite positive. 
We set \(W_\lambda = \ker (\lambda^2 + \Hat{A}^2)\) the eigenspaces corresponding to the eigenvalues $\lambda^2 \geq 0$. Moreover, we have that $\mathbb{R}^N = \oplus_{\lambda} W_\lambda$ and 
$\abs{A(x)}^2 = \sum_{\lambda \neq 0} \lambda^2 \abs{P_{W_\lambda}(x)}^2$, where $P_{W_\lambda}$ is the projection on $W_\lambda$.
We have \(R_\# A\) if and only if \(\Hat{A} \circ R = R \circ \Hat{A}\). 
In particular, we have \(\Hat{A}^2 \circ R = R \circ \Hat{A}^2\) and thus \(R (W_\lambda) = W_\lambda\). The group of isometries can thus be written as a product of groups acting on \(W_\lambda\).

If \(\lambda = 0\), we have \(\Hat{A} = 0\) on \(W_0\) and thus the condition \(\Hat{A} \circ R = R \circ \Hat{A}\) is trivially satisfied and the group \(G\) acts on \(W_0\) as the orthogonal group \(O (W_0)\). 
If \(\lambda \ne 0\), the space \(W_\lambda\) has the structure of a complex vector space defined as follows: if \(\alpha, \beta \in \Rset\) and \(w \in W_\lambda\), we set \((\alpha + i \beta) w = \alpha w + \beta A(w)/\lambda\). 
The condition \(\Hat{A} \circ R = R \circ \Hat{A}\) means then that map \(R\) is \(\C\)--linear. 
The group \(G\) acts thus on \(W_\lambda\) as \(U (W_\lambda)\). 
We have thus \(G \simeq O (W_0) \times \prod_{\lambda \neq 0} U (W_\lambda)\). The symmetry announced in the introduction follows from the fact that the linear groups \(O (W_\lambda)\) and \(U (W_\lambda)\) have the same orbits.
\end{remark}

\subsection{Decoupling of the linear operator}

The next lemma shows that the symmetry obtained in Proposition~\ref{prop:symmetric-solution} is strong enough to cancel out the coupling in \eqref{eqNLSEMag}.

\begin{lemma}%
[Decoupling by symmetry]
\label{prop:symmetriesCoupling}
Let \(u \in H^1_\mathrm{loc} (\Rset^N, \C)\) and assume that \(A \in \Lin (\Rset^N, \bigwedge^1 \Rset^N)\) is skew-symmetric.
If for every linear isometry $R : \R^N\to \R^N$ such that $R_\# A = A$, we have
\[
  u \circ R = u ,
\]
then 
\[
 \scalprod{i A u}{D u} = 0 \qquad \text{almost everywhere in \(\Rset^N\).}
\]
\end{lemma}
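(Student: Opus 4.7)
The plan is to rewrite the quantity to be controlled as an inner product of the current density with the antisymmetric operator associated to $A$, and then to exploit invariance of $u$ under an appropriate one-parameter subgroup of isometries built from the eigenstructure of $\hat{A}$.

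First I would compute pointwise: picking an orthonormal basis $(e_j)$ of $\R^N$ and using that $\scalprod{iz}{w} = \mathrm{Im}(\bar z w)$ for $z, w \in \C$, one has
\[
\scalprod{iAu(x)}{Du(x)} = \sum_j A(x)[e_j] \, \mathrm{Im}\bigl(\bar u(x) \partial_j u(x)\bigr) = A(x)[J(x)] = \hat A(x) \cdot J(x),
\]
where $J(x) \defeq \mathrm{Im}(\bar u \nabla u)(x) \in \R^N$ is the usual current density and $\hat A$ is the antisymmetric operator of Remark~\ref{rem:A}. The claim thus reduces to showing $\hat A(x) \cdot J(x) = 0$ almost everywhere in $\R^N$.

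Next I would exploit the spectral decomposition $\R^N = \bigoplus_{\lambda \ge 0} W_\lambda$ with $W_\lambda = \ker(\lambda^2 + \hat A^2)$ from Remark~\ref{rem:A}. Writing $x_\lambda \defeq P_{W_\lambda}(x)$ and using that $\hat A$ vanishes on $W_0$ and preserves each $W_\lambda$, it suffices to prove $\hat A(x_\lambda) \cdot J(x) = 0$ for each fixed $\lambda > 0$. For such $\lambda$, introduce the antisymmetric operator $B_\lambda \in \Lin(\R^N, \R^N)$ given by $B_\lambda = \hat A/\lambda$ on $W_\lambda$ and $B_\lambda = 0$ on $W_\mu$ for $\mu \ne \lambda$. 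Since $B_\lambda^2 = -\mathrm{id}$ on $W_\lambda$ and vanishes elsewhere, the one-parameter group $R_\theta \defeq \e^{\theta B_\lambda}$ is a family of linear isometries acting as a rotation of angle $\theta$ on $W_\lambda$ while fixing its orthogonal complement. Because $B_\lambda$ commutes with $\hat A$ (on each $W_\mu$ both are scalar multiples of a common operator), so do $R_\theta$ and $\hat A$, which by Remark~\ref{rem:A} is exactly the condition $(R_\theta)_\# A = A$. The hypothesis then gives $u \circ R_\theta = u$ for every $\theta \in \R$.

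Finally, I would convert this global invariance into an infinitesimal identity. For every $\varphi \in C^\infty_c(\R^N, \C)$, the change of variables $y = R_\theta x$ (unit Jacobian) yields
\[
\int_{\R^N} u(x)\bigl[\varphi(R_\theta x) - \varphi(x)\bigr] \dif x = 0.
\]
Dividing by $\theta$, letting $\theta \to 0$, and integrating by parts---justified since $\Div(B_\lambda x) = \trace B_\lambda = 0$ by antisymmetry---produces $D u(x)[B_\lambda x] = 0$ for almost every $x \in \R^N$. Multiplying this complex identity by $\bar u(x)$ and taking imaginary parts gives $J(x) \cdot (B_\lambda x) = 0$; since $B_\lambda x = \hat A(x_\lambda)/\lambda$, summing over $\lambda > 0$ yields $\hat A(x) \cdot J(x) = 0$ and concludes. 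The only delicate point is this infinitesimal step, because the vector field $x \mapsto B_\lambda x$ is not constant; however, the antisymmetry of $B_\lambda$ makes the divergence vanish and the distributional argument is then routine at the $H^1_\mathrm{loc}$ regularity of $u$.
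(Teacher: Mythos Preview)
Your proof is correct and follows essentially the same approach as the paper: both use the spectral decomposition $\R^N = \bigoplus_\lambda W_\lambda$ of $-\hat A^2$, build for each $\lambda > 0$ the one-parameter group of rotations $R_\theta = \e^{\theta B_\lambda}$ (the paper writes this explicitly as $(I - P_{W_\lambda}) + (\cos\theta\, I + \sin\theta\, \hat A/\lambda)\circ P_{W_\lambda}$), check that $(R_\theta)_\# A = A$, and differentiate the invariance $u\circ R_\theta = u$ at $\theta = 0$ to obtain $Du(x)[\hat A(P_{W_\lambda}x)] = 0$. Your distributional treatment of this last differentiation step (testing against $\varphi \in C^\infty_c$ and using $\Div(B_\lambda x) = \trace B_\lambda = 0$) is in fact more careful than the paper's, which simply asserts the derivative exists, and your initial rewriting via the current density $J = \mathrm{Im}(\bar u\,\nabla u)$ is a pleasant but inessential addition.
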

\begin{proof}
Let \(\Hat{A} \in \Lin (\Rset^N, \Rset^N)\) be the linear operator, given in Remark~\ref{rem:A}, representing \(A\) with respect to the Euclidean metric in the space \(\Rset^N\), and denote by \(W_\lambda = \ker (\lambda^2 + \Hat{A}^2) \subseteq \Rset^N\) the eigenspaces of $-\hat{A}^2$, corresponding to eigenvalues $\lambda^2  \geq 0$. We also have that $\Hat{A} (W_\lambda) = W_\lambda$. Moreover, if \(P_{W_\lambda}\) denotes the orthogonal projection on \(W_\lambda\), we have $P_{W_\lambda} \circ \Hat{A} = \Hat{A} \circ P_{W_\lambda}$.

If \(\lambda = 0\) and if \(w \in W_\lambda\), then \(\abs{\Hat{A} (w)}^2 = -\Hat{A}^2 (w) \cdot w = \lambda^2 \abs{w}^2 = 0\). Hence, \(\Hat{A} \circ P_{W_\lambda} = 0\) and thus 
\[
 D u (x) [\Hat{A} (P_{W_\lambda}(x)] = 0.
\]

If \(\lambda \ne 0\). We define for \(\theta \in \Rset\) the linear operator $R_\lambda(\theta) \, : \, \mathbb{R}^N \to \mathbb{R}^N$ by
\[
 R_\lambda (\theta)
 = (I - P_{W_\lambda}) + \bigl(\cos \theta \,\text{ I} + \sin \theta \,\Hat{A}/\lambda\bigr) \circ P_{W_\lambda}
\]
where \(I : \Rset^N \to \Rset^N\) is the identity map.
We first show that for every \(\theta \in \Rset\), the map \(R_\lambda (\theta)\) is an isometry.
We observe that for every \(\theta \in \Rset^N\) and \(v \in \Rset^N\), we have 
\[
  \abs{R_\lambda (\theta)(v)}^2 = \abs{(I - P_{W_\lambda})(v)}^2 + \bigabs{\cos \theta P_{W_\lambda} (v) + \sin \theta \,\Hat{A} (P_{W_\lambda} (v))}^2.
\]
Since for every \(w \in W_\lambda\), \(\abs{\Hat{A} (w)}^2 = \lambda^2 \abs{w}\) and since \(\Hat{A}(w) \cdot w = 0\) by antisymmetry of $\Hat{A}$, we conclude that \(R_\lambda (\theta)\) is an isometry of the Euclidean space \(\Rset^N\).
Next we show that \(R_\lambda(\theta)_\# A = A\), or equivalently that $R_\lambda(\theta) \circ \Hat{A} = \Hat{A} \circ R_\lambda(\theta)$. Since $R_\lambda(\theta)^{-1} = R_\lambda(-\theta)$, we have to compute
\[
  R_\lambda (-\theta) \circ \Hat{A} \circ R_\lambda (\theta) 
  = \Hat{A} + \bigl( 2 (\cos \theta - 1) + ((\cos \theta - 1)^2 + (\sin \theta)^2)\bigr) \Hat{A} \circ P_{W_\lambda} = \Hat{A}.
\]
By our assumption, we have 
\[
 u (R_\lambda (\theta)(x)) = u (x).
\]
By differentiating with respect to \(\theta\) at \(0\), we deduce that 
\[
 D u (x)[\Hat{A} (P_{W_\lambda} (x))] = 0.
\]
This concludes the proof.
\end{proof}

\subsection{Decoupled problem}

We now consider the decoupled problem
\begin{equation}
\label{eq:neweq}
 -\Delta u + (1 + \abs{A}^2) u = \abs{u}^{p - 2} u \qquad \text{ in } \mathbb{R}^N.
\end{equation}
Solutions of this problem are critical points of the functional 
\[
 \Tilde{\mathcal{I}}_{\abs{A}^2} = \frac{1}{2} \int_{\Rset^N} \abs{D u}^2 + (1 + \abs{A}^2)\abs{u}^2
 - \frac{1}{p} \int_{\Rset^N} \abs{u}^p,
\]
defined on the natural function space 
\[
 \Tilde{H}^1_{\abs{A}^2} (\Rset^N)
 = \Bigl\{ u \in H^1 (\Rset^N) \st \int_{\Rset^N} \abs{A}^2 \abs{u}^2 < \infty \Bigr\}.
\]

\begin{proposition}
[Symmetry of groundstates of the modified problem]
\label{propSymmetryModified}
If \(u\) is a groundstate solution of \eqref{eq:neweq} and if \(A \in \Lin (\Rset^N, \bigwedge^1 \Rset^N)\) can be written as
\[
  \abs{A (x)}^2 = \sum_{j = 1}^k \lambda_j^2 \abs{P_{W_j} (x)}^2,
\]
with \(\lambda_1, \dotsc, \lambda_k \in \mathbb{R}_0\) and \(\Rset^N = W_0 \oplus \dotsb \oplus W_k\),
then there exist \(a \in W_0\) and \(v : [0, \infty)^{k + 1} \to [0, \infty)\) nonincreasing with respect to each of its variable such that 
\[
  u(x) = v (\abs{P_{W_0} (x) - a}, \abs{P_{W_1} (x)}, \dotsc, \abs{P_{W_k} (x)}).
\]
\end{proposition}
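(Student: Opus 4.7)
The plan is to combine slice-wise Schwarz symmetrization in each $W_j$ with $j\ge 1$, where the weight $\lambda_j^2|P_{W_j}|^2$ is radially increasing from the origin, with the Gidas--Ni--Nirenberg moving plane method in the translation-invariant direction $W_0$. As a preliminary reduction, I would apply the pointwise inequality $|\nabla|u||\le|\nabla u|$ (valid for complex-valued $u$) together with elliptic regularity and the strong maximum principle, as in the proof of Proposition~\ref{propositionDiamagneticUniqueness}, to obtain that $|u|$ is also a groundstate of \eqref{eq:neweq}, that $|u|>0$ everywhere, and that $u=e^{i\theta}|u|$ for some constant $\theta\in\Rset$. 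Hence we may assume $u>0$ is real; Kato's inequality applied to \eqref{eq:neweq} then gives the exponential decay of $u$ at infinity required below.

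For each $j\in\{1,\ldots,k\}$, let $u^*_j$ denote the function obtained by replacing $u|_{W_j+y}$, for every $y\in\bigoplus_{i\ne j}W_i$, by its symmetric decreasing rearrangement in $W_j$ about $0\in W_j$. All $L^q$ norms of $u$ are preserved. Because for every fixed $y$ the weight $|A(x)|^2=\sum_i\lambda_i^2|P_{W_i}(x)|^2$ is a radially \emph{increasing} function of $|P_{W_j}(x)|$, the slice-wise Hardy--Littlewood type inequality yields $\int_{\Rset^N}|A|^2|u^*_j|^2\le\int_{\Rset^N}|A|^2|u|^2$, while slice-wise Pólya--Szegő yields $\int_{\Rset^N}|\nabla u^*_j|^2\le\int_{\Rset^N}|\nabla u|^2$. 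Hence $u^*_j$ is an admissible competitor with $\tilde{\mathcal{I}}_{|A|^2}(u^*_j)\le\tilde{\mathcal{I}}_{|A|^2}(u)$ and the same $L^p$ norm, so groundstate minimality forces equality throughout. The strict rigidity of the weighted rearrangement (using the strict monotonicity of $|P_{W_j}|^2$) combined with $u>0$ then forces $u=u^*_j$ pointwise, so $u$ depends on $P_{W_j}(x)$ only through $|P_{W_j}(x)|$ and is nonincreasing in this variable.

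It remains to handle the $W_0$-direction, where the weight $|A|^2$ is independent of $P_{W_0}(x)$ and the problem is translation invariant. For each unit vector $e\in W_0$ and each $\lambda\in\Rset$, reflection about the hyperplane $\{x:x\cdot e=\lambda\}$ preserves \eqref{eq:neweq}. The classical Berestycki--Nirenberg moving plane method in $\Rset^N$, applied to the positive decaying solution $u$, produces a unique $\lambda_e\in\Rset$ such that $u$ is symmetric about $\{x\cdot e=\lambda_e\}$ and nonincreasing in $|x\cdot e-\lambda_e|$. Fixing an orthonormal basis $\{e_1,\ldots,e_{n_0}\}$ of $W_0$, the intersection of the hyperplanes $\{x\cdot e_i=\lambda_{e_i}\}$ determines the unique point $a\in W_0$ with $a\cdot e_i=\lambda_{e_i}$, and compositions of the corresponding reflections generate the full orthogonal group of $W_0$ fixing $a$. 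This yields radial symmetry of $u$ in $W_0$ about $a$ together with monotonicity in $|P_{W_0}(x)-a|$, and combined with the previous paragraph gives the announced representation.

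The main technical point I anticipate is the strict rigidity step for $j\ge 1$: one must rigorously argue that equality in the slice-wise rearrangement inequalities, combined with $u>0$ and the continuity of $u$, forces pointwise radial symmetry centered precisely at $0\in W_j$ (and not at some slice-dependent point). This is where the strict monotonicity of $\lambda_j^2|P_{W_j}|^2$ in $|P_{W_j}|$ is essential: unlike the $W_0$ component (whose center is genuinely free and must be determined by moving planes), the $W_j$ centers are pinned by the geometry of the weight, which is not translation-invariant within $W_j$.
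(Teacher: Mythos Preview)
Your approach is correct and genuinely different from the paper's. The paper argues entirely through \emph{polarization} (two-point rearrangement): for $j\ge 1$ it shows that any polarization $u^H$ with $W_j^\perp\subset\operatorname{int}H$ strictly lowers $\int|A|^2|u|^2$ unless $u=u^H$ (their Lemma~\ref{lemma:polarisation}), and then invokes a characterization of functions invariant under all such polarizations (Lemma~\ref{lemma:schwarz}); for $j=0$ it follows the Bartsch--Weth--Willem strategy, observing that $u^H$ is again a groundstate, applying the strong maximum principle to $|u-u\circ\sigma_H|$ to force $u^H\in\{u,\,u\circ\sigma_H\}$, and concluding via Lemma~\ref{lemma:schwarz2}. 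Your route---slice-wise Schwarz symmetrization plus weight rigidity for $j\ge 1$, moving planes for $j=0$---uses heavier but equally standard tools; the paper's polarization argument is more unified and keeps the rigidity step elementary (equality in a single integral inequality rather than Brothers--Ziemer type considerations), while your approach has the advantage that each step cites a well-known black box.

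One imprecision to fix in your $W_0$ paragraph: reflections about the $n_0$ coordinate hyperplanes through $a$ generate only $(\Z/2\Z)^{n_0}$, not $O(W_0)$. What you actually need (and what you set up correctly at the start of the paragraph by running moving planes for \emph{every} unit $e\in W_0$) is that all the symmetry hyperplanes $\{x\cdot e=\lambda_e\}$ pass through the common point $a$; this is the standard Li--Ni step, using that a decaying solution cannot have a nontrivial translational symmetry. Once every such hyperplane contains $a$, the reflections for all $e$ do generate $O(W_0)$ and radial symmetry follows.
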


The assumption on \(A\) can be reformulated by saying that \(W_0\) is the (possibly trivial) kernel of $- \Hat{A}^2$, while $W_i$ are the eigenspaces corresponding to positive eigenvalues $\lambda_i^2 > 0$, for $i = 1, \dotsc, k$. 
Since \(\abs{A}^2\) is a nonnegative quadratic form, this decomposition is always possible (see Remark~\ref{rem:A}).

In order to prove Proposition~\ref{propSymmetryModified}, we will rely on the notion of polarisation (or two-point rearrangement). If $H \subset \mathbb{R}^N$ is a closed half-space, and $\sigma_H$ is the reflection with respect to $\partial H$, the polarization of the function $u : \Rset^N \to \mathbb{R}$ is
\begin{equation*}
  u^H(x) 
  = \begin{cases} \max \{ u(x), u(\sigma_H(x) \}, & \text{if \(x \in H\)}, \\
                                 \min \{ u(x), u(\sigma_H(x) \}, & \text{if \(x \in \Rset \setminus H\)}.
  \end{cases} 
\end{equation*}

We shall rely on the following lemma.

\begin{lemma}%
[Behavior of the potential under polarization]%
\label{lemma:polarisation}
Let $H$ be a closed half-space of $\mathbb{R}^N$. 
If \(u : \Rset^N \to \R\) is nonnegative and measurable and if for every \(x \in H\), \(\abs{A(\sigma_H (x))}^2 \ge \abs{A (x)}^2\), then 
\[
\int_{\mathbb{R}^N} \abs{A}^2\, \abs{u^H}^2 
\leq \int_{\mathbb{R}^N} \abs{A}^2\, \abs{u}^2;
\]
equality holds if and only if either \(u^H = u\) or \(\abs{A \circ \sigma_H}^2 = \abs{A}^2\).
\end{lemma}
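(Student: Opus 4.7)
The plan is to carry out the standard pairwise polarization computation, treating each point $x \in H$ together with its reflection $\sigma_H(x) \in \mathbb{R}^N \setminus H$. Since $\sigma_H$ is an isometry, the change of variables $y = \sigma_H(x)$ has unit Jacobian, so I would rewrite
\[
  \int_{\mathbb{R}^N} \abs{A}^2 \abs{u^H}^2
  = \int_H \bigl(\abs{A(x)}^2 \abs{u^H(x)}^2 + \abs{A(\sigma_H(x))}^2 \abs{u^H(\sigma_H(x))}^2\bigr)\dif x,
\]
and likewise for $u$ in place of $u^H$. Thus everything reduces to a pointwise comparison on $H$.

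Fix $x \in H$ and write $a = \abs{A(x)}^2$, $b = \abs{A(\sigma_H(x))}^2$, $s = u(x) \ge 0$, $t = u(\sigma_H(x)) \ge 0$. By the definition of the polarization, $u^H(x) = \max\{s, t\}$ and $u^H(\sigma_H(x)) = \min\{s, t\}$. A direct computation shows
\[
  \bigl(a \max\{s,t\}^2 + b \min\{s,t\}^2\bigr) - (a s^2 + b t^2)
  = \begin{cases} 0 & \text{if } s \ge t, \\ (a-b)(t^2 - s^2) & \text{if } s < t. \end{cases}
\]
In both cases the right-hand side is nonpositive: in the first trivially, in the second because the hypothesis gives $b \ge a$ while $t^2 > s^2$. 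Integrating over $H$ yields the desired inequality.

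For the equality case, suppose $\int_{\mathbb{R}^N} \abs{A}^2 \abs{u^H}^2 = \int_{\mathbb{R}^N} \abs{A}^2 \abs{u}^2$. Then the pointwise integrand identified above vanishes almost everywhere on $H$, so on the set
\[
  E = \bigl\{x \in H \st u(x) < u(\sigma_H(x))\bigr\}
\]
one has $\abs{A(x)}^2 = \abs{A(\sigma_H(x))}^2$ almost everywhere. If $E$ is negligible, then $u^H = u$ almost everywhere. Otherwise, the key observation — which I expect to be the main subtle point of the argument — is that since $A \in \Lin(\mathbb{R}^N, \bigwedge^1 \mathbb{R}^N)$ is linear, the function $x \mapsto \abs{A(x)}^2 - \abs{A(\sigma_H(x))}^2$ is a polynomial of degree at most $2$. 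A nonzero polynomial cannot vanish on a set of positive Lebesgue measure, so it vanishes identically, giving $\abs{A \circ \sigma_H}^2 = \abs{A}^2$ on all of $\mathbb{R}^N$. This completes the dichotomy claimed in the lemma.
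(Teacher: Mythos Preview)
Your proof is correct and follows essentially the same route as the paper's: both pair each $x\in H$ with its reflection $\sigma_H(x)$, reduce the difference of the two integrals to $\int_E \bigl(\abs{A(x)}^2-\abs{A(\sigma_H(x))}^2\bigr)\bigl(u(\sigma_H(x))^2-u(x)^2\bigr)\dif x$ on the set $E=\{x\in H: u(x)<u(\sigma_H(x))\}$, and then handle the equality case by noting that $\abs{A}^2$ is a quadratic form (hence a polynomial) so vanishing of the difference on a set of positive measure forces it to vanish identically. Your presentation is slightly more explicit in the pointwise case split, but there is no substantive difference.
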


\begin{proof}
We have 
\[
 \int_{\mathbb{R}^N} \abs{A}^2\, \abs{u}^2 - \int_{\mathbb{R}^N} \abs{A}^2\, \abs{u^H}^2 
 = \int_{E} \bigl(\abs{A (\sigma_H (x))}^2 - \abs{A (x)}^2 \bigr)\,
 \bigl(\abs{u (\sigma_H (x))}^2 - \abs{u (x)}^2 \bigr)\dif x,
\]
where the set \(E \subset \Rset^N\) is defined by
\[
  E = \bigl\{ x \in H \st u (x) < u(\sigma_H(x))\bigr\}.
\]
The integral on the right-hand side is clearly nonnegative.

Assume now that the integral on the right-hand side is \(0\) and that \(u^H \not\equiv u\).
In particular, the set \(E\) has positive Lebesgue measure and for every \(x \in E\), 
we have \(\abs{A (\sigma_H (x))}^2 = \abs{A (x)}\). Since the function \(\abs{A}^2\) is a quadratic form, 
this implies that \(\abs{A \circ \sigma_H}^2 = \abs{A}^2\) on \(\Rset^N\).

The other possibility is that $E$ has zero Lebesgue measure, so that $u \equiv u^H$.
\end{proof}

A function which is invariant under polarizations with respect to large set of hyperplanes is known to be symmetric, see \citelist{\cite{BrockSolynin2000}*{Lemma 6.3}}.

\begin{lemma}[Symmetry by invariance under polarization]
\label{lemma:schwarz}
Let $u \in L^2 (\mathbb{R}^N)$ be nonnegative and let \(W \subset \Rset^N\) be a linear subspace of \(\Rset^N\). 
If for every closed half-space $H \subset \mathbb{R}^N$ such that $W^\perp \subset \operatorname{int} H$, $u^H = u$, then 
for every $x, y \in \mathbb{R}^N$ such that $x - P_{W} (x) = y - P_W (y)$ and $\abs{P_W (x)} \le \abs{P_W (y)}$, $u (x) \ge u (y)$.
\end{lemma}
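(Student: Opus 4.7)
The plan is to reduce the statement to the classical symmetrization result of Brock and Solynin \cite{BrockSolynin2000}*{Lemma 6.3} by slicing \(\R^N \simeq W \oplus W^\perp\).

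First I would characterize the relevant half-spaces. Writing \(H = \{x \in \R^N \st a \cdot x \le c\}\) for a unit vector \(a \in \R^N\) and \(c \in \R\), the condition \(W^\perp \subset \operatorname{int} H\) forces \(a \cdot v < c\) for every \(v \in W^\perp\); since \(W^\perp\) is a linear subspace, this happens if and only if \(a \in W\) and \(c > 0\). For such an \(H\), the reflection \(\sigma_H\) acts only on the \(W\)-component: writing \(x = w + w^\perp\) with \(w \in W\) and \(w^\perp \in W^\perp\), one has \(\sigma_H(x) = \sigma_{H'}(w) + w^\perp\), where \(H' \defeq \{w \in W \st a \cdot w \le c\}\) is a closed half-space of \(W\) with \(0 \in \operatorname{int} H'\). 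Conversely, every such \(H'\) arises in this way, so the polarizations of \(u\) admissible by the hypothesis correspond exactly to polarizations in each slice \(W + w^\perp\) with respect to half-spaces of \(W\) containing the origin in their interior.

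Next I would slice. For \(w^\perp \in W^\perp\) set \(u_{w^\perp}(w) \defeq u(w + w^\perp)\), which belongs to \(L^2(W)\) for almost every \(w^\perp\) by Fubini. Choose a countable family \(\{H_n\}_{n \in \N}\) of half-spaces of \(\R^N\) with \(W^\perp \subset \operatorname{int} H_n\) such that the corresponding family \(\{H_n'\}_{n \in \N}\) is dense in the set of closed half-spaces of \(W\) containing the origin in their interior. For each \(n\) the hypothesis \(u^{H_n} = u\) and Fubini yield a null set \(N_n \subset W^\perp\) such that for every \(w^\perp \in W^\perp \setminus N_n\) one has \(u_{w^\perp}^{H_n'} = u_{w^\perp}\) almost everywhere in \(W\); setting \(N \defeq \bigcup_n N_n\), which is still null, the slice \(u_{w^\perp}\) is invariant under polarization with respect to each \(H_n'\) whenever \(w^\perp \notin N\). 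Since the polarization map \(v \mapsto v^{H'}\) depends continuously on \(H'\) in \(L^2_{\mathrm{loc}}(W)\), the invariance extends by density to every closed half-space \(H' \subset W\) with \(0 \in \operatorname{int} H'\). Brock--Solynin's lemma then yields that, for almost every \(w^\perp\), the slice \(u_{w^\perp}\) coincides almost everywhere on \(W\) with a radially symmetric and nonincreasing function centered at \(0\).

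Translating this back to \(\R^N\), for almost every \(x, y \in \R^N\) with \(x - P_W(x) = y - P_W(y)\) and \(\abs{P_W(x)} \le \abs{P_W(y)}\), the inequality \(u(x) \ge u(y)\) follows from the radial monotonicity of \(u_{w^\perp}\) applied to \(w^\perp = x - P_W(x)\). The main obstacle is the bookkeeping of the various exceptional null sets: one must ensure that a single generic \(w^\perp \in W^\perp\) inherits invariance of its slice under all admissible polarizations simultaneously, which is what the countable-dense family and the countable union \(N\) are designed for. Once this step is carried out, the conclusion reduces to a direct invocation of the classical polarization-symmetry principle in \(W\).
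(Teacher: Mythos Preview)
Your approach is correct and is essentially what the paper intends: the paper gives no proof for this lemma at all, simply citing \cite{BrockSolynin2000}*{Lemma 6.3} as the source of the result, so your slicing reduction to the classical case \(W = \Rset^N\) is precisely the missing detail. One cosmetic remark: the final statement ``for almost every \(x,y\)'' is awkward because the constraint \(x - P_W(x) = y - P_W(y)\) defines a null set of pairs; it is cleaner to say that for almost every \(w^\perp \in W^\perp\) the slice \(u_{w^\perp}\) agrees almost everywhere with a radial nonincreasing function, which is equivalent to the lemma's conclusion once one passes to a suitable representative of \(u\).
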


Under the weaker condition that either the function or its reflection coincides with the polarization but assumed for a larger set of polarization, the same symmetry holds up to a suitable translation.

\begin{lemma}[Symmetry by invariance or reflection under polarization] \label{lemma:schwarz2}
Let $u \in L^2 (\mathbb{R}^N)$ be nonnegative and let \(W \subset \Rset^N\) be a linear subspace of \(\Rset^N\). 
If for every closed half-space $H \subset \mathbb{R}^N$ such that $W^\perp$ is parallel to $\partial H$, either $u^H = u$ or $u^H = u \circ \sigma_H$, 
then there exists $a \in W$ such that for every $x, y \in \mathbb{R}^N$ such that $x - P_{W} (x) = y - P_W (y)$ and $\abs{P_W (x) - a} \le \abs{P_W (y) - a}$, $u (x) \ge u (y)$.
\end{lemma}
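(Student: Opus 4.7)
The plan is to locate the center $a \in W$ by a symmetry-uniqueness argument and then reduce to Lemma~\ref{lemma:schwarz}. Assume $u \not\equiv 0$ (otherwise the statement is trivial). For a unit vector $v \in W$ and $t \in \R$, write $H(v, t) = \{x \in \R^N : v \cdot x \ge t\}$; since $\partial H(v, t)$ is parallel to $W^\perp$, the hypothesis forces, for each $t$, either $u \ge u \circ \sigma_{H(v,t)}$ a.e.\ on $H(v,t)$ (``type $+$'') or the reversed inequality (``type $-$''). Consider the continuous non-increasing function
\[
  f_v(t) = \int_{H(v,t)} \abs{u}^2 - \int_{H(v,t)^c} \abs{u}^2,
\]
which tends to $\pm \norm{u}_{L^2}^2$ as $t \to \mp\infty$. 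A type $+$ situation at $t$ forces $f_v(t) \ge 0$ and a type $-$ one forces $f_v(t) \le 0$, so the sign of $f_v$ unambiguously selects the type whenever $f_v(t) \ne 0$.

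My first step is to show that $f_v$ has a unique zero $t^*_v$ and that $\sigma_v \defeq \sigma_{H(v, t^*_v)}$ is a symmetry of $u$. Indeed, combining the dichotomy at $t^*_v$ with the change-of-variables identity $\int_{H} \abs{u \circ \sigma_{H}}^2 = \int_{H^c} \abs{u}^2$ shows that whenever $f_v(t) = 0$, the pointwise inequality of type $+$ or $-$ becomes an equality a.e., so $u = u \circ \sigma_{H(v,t)}$. Two such distinct values of $t$ would, by composition of the corresponding reflections, produce a nontrivial translation symmetry of $u$, which is impossible for a nonzero $L^2$ function; uniqueness of $t^*_v$ follows. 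In particular, for every $t < t^*_v$ one has $f_v(t) > 0$, hence type $+$ holds and $u^{H(v,t)} = u$.

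The key point is then to prove that the hyperplanes $\partial H(v, t^*_v)$ share a common point as $v$ ranges over the unit sphere of $W$. Fix an orthonormal basis $(v_1, \dotsc, v_k)$ of $W$ and set
\[
  a \defeq \sum_{i=1}^k t^*_{v_i}\, v_i \in W,
\]
so that each $\sigma_{v_j}$ fixes $a$. The product $\Sigma \defeq \sigma_{v_1} \circ \dotsb \circ \sigma_{v_k}$ preserves $u$, acts on $W$ as the point reflection $x \mapsto 2a - x$, and leaves $W^\perp$ pointwise fixed. For an arbitrary unit $v \in W$, the conjugate $\Sigma \circ \sigma_v \circ \Sigma^{-1}$ is again a reflection preserving $u$, and a direct computation identifies its hyperplane of fixed points as $\partial H(v,\, 2 v \cdot a - t^*_v)$. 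By the uniqueness of $t^*_v$, this forces $2 v \cdot a - t^*_v = t^*_v$, i.e.\ $v \cdot a = t^*_v$, so $a$ lies on every axis $\partial H(v, t^*_v)$.

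The conclusion then follows by translation. Setting $\Tilde u(x) = u(x + a)$ and using that polarization intertwines with translation via $\Tilde u^H(x) = u^{H+a}(x+a)$, a half-space $H$ satisfying $W^\perp \subset \operatorname{int} H$ (that is, of the form $\{v \cdot x \ge s\}$ with $v \in W$ a unit vector and $s < 0$) becomes $H + a = H(v, s + v \cdot a)$ with $s + v \cdot a < t^*_v$; the analysis of the second paragraph then gives $u^{H+a} = u$ and hence $\Tilde u^H = \Tilde u$. Lemma~\ref{lemma:schwarz} applied to $\Tilde u$ yields the desired monotonicity of $u$ along rays emanating from $a$ within each slice where $P_{W^\perp}$ is constant. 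The main obstacle is extracting a single global center $a \in W$ from the per-direction data $(t^*_v)_v$: this is exactly what the conjugation-by-$\Sigma$ argument accomplishes, leveraging crucially the \emph{uniform-in-slice} form of the polarization dichotomy built into the hypothesis.
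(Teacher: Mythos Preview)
Your argument is correct and complete: the mass-imbalance function $f_v$ is continuous, non-increasing, and changes sign, the translation argument rules out more than one zero, and the conjugation by $\Sigma$ neatly forces all the symmetry hyperplanes through a single point $a\in W$, after which Lemma~\ref{lemma:schwarz} applies to the recentred function.

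The route is genuinely different from the paper's. The paper (following \cite{MorozVanSchaftingen2013-2}*{Lemma 5.6} and \cite{VanSchaftingen2009}*{Lemma 5}) locates $a$ in a single step as an extremizer over $W$ of the convolution $x\mapsto\int_{\R^N} w(y-x)\,u(y)\dif y$ with a fixed positive radially decreasing weight $w\in L^2(\R^N)$; one then checks directly, using the interplay of polarization with this convolution, that for every half-space $H$ whose interior contains $a$ one must be in the ``type $+$'' alternative, and Lemma~\ref{lemma:schwarz} finishes. Your approach is more hands-on: you work direction by direction, extract $t^*_v$ from $f_v$, and then need the extra conjugation step to glue the per-direction data into a single centre. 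The paper's method is shorter and coordinate-free (no choice of basis of $W$), while yours is more elementary in that it avoids introducing the auxiliary weight $w$ and makes the determination of $a$ along each direction completely explicit.
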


\begin{proof}
The proof is a straightforward adaptation of the corresponding statement when \(W = 0\) \citelist{\cite{MorozVanSchaftingen2013-2}*{Lemma 5.6}\cite{VanSchaftingen2009}*{Lemma 5}}. 
The point \(a \in W\) is a minimizer of the function 
\[
 x \in W^\perp \longmapsto \int_{\Rset^N} w (y - x)\, u (y) \dif y \in \Rset,
\]
where the function \(w \in L^2 (\Rset^N)\) is positive, radial and radially decreasing.
\end{proof}

\begin{proof}[Proof of proposition~\ref{propSymmetryModified}]
We first prove that \(u\) has constant phase. We have by the formula for the derivative of the modulus \eqref{eqDerivativeModulus}
\begin{equation}
\label{eq:IntegrateDia}
  \int_{\Rset^N} \bigabs{\nabla \abs{u}}^2 = \int_{\Rset^N} \bigabs{\scalprod{\nabla u}{\sign u}}^2
  \le \int_{\Rset^N} \abs{\nabla u}^2.
\end{equation}
Therefore, for every \(t \in [0, \infty)\),
\[
  \Tilde{\mathcal{I}}_{\abs{A}^2} (t\abs{u}) \le \Tilde{\mathcal{I}}_{\abs{A}^2} (tu),
\]
and it follows, taking $t=1$, that \(\abs{u}\) is a groundstate of the equation \eqref{eq:neweq}.
By classical regularity estimates and the strong maximum principle, the function \(\abs{u}\) is smooth and positive. Since we have equality in \eqref{eq:IntegrateDia}, following the argument of the proof of Proposition~\ref{propositionDiamagneticUniqueness} it follows that \(u = \e^{i \theta} \abs{u}\) for some \(\theta \in \Rset\).
In the sequel of the proof, we shall fix without loss of generality \(\theta = 0\).

Let \(i \in \{1, \dotsc, k\}\). For every closed halfspace \(H \subset \Rset^N\) such that 
\(W_i^\perp \subset \operatorname{int} H\), we have for every \(x \in \operatorname{int} H\)
\[
  \abs{P_{W_i} (x)} < \abs{P_{W_i} (\sigma_H (x))},
\]
and for each \(j \in \{0, \dotsc, k\} \setminus \{i\}\),
\[
 \abs{P_{W_j} (x)} = \abs{P_{W_j} (\sigma_H (x))}.
\]
Since \(\lambda_i^2 > 0\) and \(\lambda_j^2 > 0\), we have  for every $x \in H$
\[
  |A (\sigma_H (x))| > |A (x)|.
\]
It follows thus from Lemma~\ref{lemma:polarisation} that either $u = u^H$ or for every \(t \in (0, \infty)\),
\[
 \Tilde{\mathcal{I}}_{\abs{A}^2} (tu^H) < \Tilde{\mathcal{I}}_{\abs{A}^2} (tu) \le \Tilde{\mathcal{I}}_{\abs{A}^2} (u).
\]
The last situation cannot occur since $u$ is a groundstate of \eqref{eq:neweq}. Therefore $u=u^H$.
By Lemma~\ref{lemma:schwarz}, we conclude that for every $x, y \in \Rset^N$ such that for $j \in \{0, \dotsc, k\} \setminus \{i\}$, $P_{W_j} (x) = P_{W_j} (y)$ and $\abs{P_{W_i} (x)} \le \abs{P_{W_i} (y)}$, then $u (x) \ge u (y)$.

We now treat the case $i = 0$, following the strategy of Bartsch, Weth and Willem in \cite{BartschWethWillem} (see also \cite{VanSchaftingenWillem2008}).
We observe that if \(\partial H\) is parallel to \(W_0^\perp\), we have for each \(x \in \Rset^N\), 
\[
  \abs{A (\sigma_H (x))}^2 = \abs{A (x)}^2,
\]
and then by Lemma~\ref{lemma:polarisation} for every \(t \in (0, \infty)\),
\[
 \Tilde{\mathcal{I}}_{\abs{A}^2} (tu^H) \le \Tilde{\mathcal{I}}_{\abs{A}^2} (tu)
 \le \Tilde{\mathcal{I}}_{\abs{A}^2} (u).
\]
In particular, since \(u\) is a groundstate of the problem \eqref{eq:neweq}, its polarization 
\(u^H\) is also a groundstate. We then have 
\[
  \abs{u - u \circ \sigma_H} = 2 u^H - u - u \circ \sigma_H \qquad \text{almost everywhere in } H.
\]
Therefore, using the equation of $u$, $u^H$, 
\[
 -\Delta \abs{u - u \circ \sigma_H} + (1 + \abs{A}^2) \abs{u - u \circ \sigma_H}
 = 2 \abs{u_H}^{p-1} - \abs{u}^{p-1} - \abs{u \circ \sigma_H}^{p-1} 
 \ge 0, \qquad \text{ on } H.
\]
By the strong maximum principle, this implies that \(\abs{u - u \circ \sigma_H}\) does not vanish inside $H$, so that \(u - u \circ \sigma_H\) does not change sign inside $H$. We have therefore either \(u^H = u\) or \(u^H = u \circ \sigma_H\).
By Lemma~\ref{lemma:schwarz2}, there exists $a \in W_0$, such that, for $j \in \{1, \dotsc, k\}$, $P_{W_j} (x) = P_{W_j} (y)$ and $\abs{P_{W_0} (x) - a} \le \abs{ P_{W_0} (y) - a}$, then $u (x) \ge u (y)$.
\end{proof}                                                                   

\subsection{Conclusion}

We first prove that when the magnetic field is weak enough, the groundstates of \eqref{eqNLSEMag} 
correspond to groundstates of \eqref{eq:neweq}.

\begin{proposition}%
[Groundstates of the magnetic problem are groundstates of the modified problem]
\label{proposition:realGS}%
Let \(\varepsilon > 0\) be given by Theorem~\ref{theoremUniqueness}.
If \(A\) and \(u\) are as in the conclusion of Theorem~\ref{theoremUniqueness}, then there exists \(a \in \Rset^N\) such that \(\tau^A_a u\) is a groundstate of the problem \eqref{eq:neweq}.
\end{proposition}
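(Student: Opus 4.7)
My plan is to show that, after a suitable magnetic translation, the magnetic groundstate $u$ becomes a complex-phase multiple of a real positive groundstate of the decoupled problem \eqref{eq:neweq}. First, I would invoke Theorem~\ref{theoremSymmetry} to produce $a \in \R^N$ such that $v \defeq \tau^A_a u$ satisfies $v \circ R = v$ for every linear isometry $R$ of $\R^N$ with $\abs{A\circ R}^2 = \abs{A}^2$; writing $y = x + a$ in the formula of Theorem~\ref{theoremSymmetry} and using $\tau^A_a u(y) = \e^{-iA(a)[y]} u(y-a)$ shows that that formula is exactly this invariance for $v$. Since $D_A$ commutes with $\tau^A_a$, the function $v$ is still a magnetic groundstate: $\mathcal{I}_A(v) = \mathcal{E}(dA)$ and $\mathcal{I}_A'(v) = 0$. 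Lemma~\ref{prop:symmetriesCoupling} then gives $\scalprod{iAv}{Dv} = 0$ almost everywhere; writing $v = \abs{v}\e^{i\varphi}$ locally on $\{v \ne 0\}$, this reads $A\cdot\nabla\varphi = 0$ there, so the standard polar identity yields, almost everywhere on $\R^N$,
\begin{equation*}
\abs{D_A v}^2 = \abs{D\abs{v}}^2 + \abs{v}^2 \abs{A + \nabla\varphi}^2 = \abs{D\abs{v}}^2 + \abs{A}^2\abs{v}^2 + \abs{v}^2\abs{\nabla\varphi}^2.
\end{equation*}
In particular, $\abs{v} \in \Tilde{H}^1_{\abs{A}^2}(\R^N)$ and $\mathcal{Q}_A(v) \ge \Tilde{\mathcal{Q}}_{\abs{A}^2}(\abs{v})$, with equality if and only if $\nabla\varphi \equiv 0$ where $v \ne 0$.

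The next step is to compare the two ground energies. Let $\Tilde{\mathcal{E}}(\abs{A}^2)$ and $\Tilde{\mathcal{Q}}_{\abs{A}^2}$ denote the ground energy and Rayleigh quotient of \eqref{eq:neweq}; the verbatim analog of Lemma~\ref{lemmaExistenceGroundstate} for that decoupled problem gives $\Tilde{\mathcal{E}}(\abs{A}^2) = \bigl(\tfrac{1}{2}-\tfrac{1}{p}\bigr)\inf \Tilde{\mathcal{Q}}_{\abs{A}^2}^{p/(p-2)}$. Combining this with Lemma~\ref{lemmaExistenceGroundstate} applied to $v$ and with the pointwise inequality above,
\begin{equation*}
\mathcal{E}(dA) = \bigl(\tfrac{1}{2}-\tfrac{1}{p}\bigr)\mathcal{Q}_A(v)^{p/(p-2)} \ge \bigl(\tfrac{1}{2}-\tfrac{1}{p}\bigr)\Tilde{\mathcal{Q}}_{\abs{A}^2}(\abs{v})^{p/(p-2)} \ge \Tilde{\mathcal{E}}(\abs{A}^2).
\end{equation*}
The reverse inequality $\mathcal{E}(dA) \le \Tilde{\mathcal{E}}(\abs{A}^2)$ is immediate: any real positive groundstate $w_0$ of \eqref{eq:neweq} belongs to $H^1_A(\R^N,\C)$ and, being real, satisfies $\abs{D_A w_0}^2 = \abs{Dw_0}^2 + \abs{A}^2 w_0^2$, whence $\mathcal{Q}_A(w_0) = \Tilde{\mathcal{Q}}_{\abs{A}^2}(w_0) = \inf \Tilde{\mathcal{Q}}_{\abs{A}^2}$.

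Consequently, all the inequalities above become equalities, which forces both $\Tilde{\mathcal{Q}}_{\abs{A}^2}(\abs{v}) = \inf \Tilde{\mathcal{Q}}_{\abs{A}^2}$ and $\nabla\varphi \equiv 0$ on $\{v \ne 0\}$. The first identity, combined with the observation that the magnetic Nehari relation $\int_{\R^N} \abs{D_A v}^2 + \abs{v}^2 = \int_{\R^N} \abs{v}^p$ and the polar identity place $\abs{v}$ on the Nehari manifold of \eqref{eq:neweq}, implies that $\abs{v}$ is a groundstate of the decoupled problem. Classical elliptic regularity and the strong maximum principle applied to \eqref{eq:neweq} give $\abs{v} > 0$ on all of $\R^N$, so $\varphi$ is constant on the connected set $\{v \ne 0\} = \R^N$, and $v = \e^{i\theta_0} \abs{v}$ is itself a groundstate of \eqref{eq:neweq}. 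The main obstacle is precisely the reverse inequality $\mathcal{E}(dA) \ge \Tilde{\mathcal{E}}(\abs{A}^2)$: Lemma~\ref{prop:symmetriesCoupling} is indispensable here, because without the vanishing of $A\cdot\nabla\varphi$ one could only invoke the diamagnetic inequality $\abs{D_A v}^2 \ge \abs{D\abs{v}}^2$, which misses the decisive $\abs{A}^2\abs{v}^2$ contribution to $\Tilde{\mathcal{Q}}_{\abs{A}^2}(\abs{v})$.
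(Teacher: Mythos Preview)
Your argument has one logical circularity: you invoke Theorem~\ref{theoremSymmetry} to produce the translation point $a$, but in the paper Theorem~\ref{theoremSymmetry} is deduced \emph{from} Proposition~\ref{proposition:realGS} (see the proof of Theorem~\ref{theoremSymmetry} at the end of \S\ref{section:symmetryof}). The fix is immediate: what you actually use is only the symmetry assertion $v\circ R=v$, and that is precisely Proposition~\ref{prop:symmetric-solution} (with $a$ chosen as the center of mass) together with Remark~\ref{rem:A}; both are established before Proposition~\ref{proposition:realGS} and independently of it. Cite those instead.

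With that corrected, your proof is valid but differs in execution from the paper's. The paper applies Lemma~\ref{prop:symmetriesCoupling} to get $\scalprod{iAv}{Dv}=0$ and hence the identity $\mathcal{I}_A(tv)=\Tilde{\mathcal{I}}_{\abs{A}^2}(tv)$ for the complex-valued $v$ itself, then compares mountain-pass levels; for the reverse inequality it needs Proposition~\ref{propSymmetryModified} to ensure that a groundstate $\tilde v$ of \eqref{eq:neweq} has the symmetry required by Lemma~\ref{prop:symmetriesCoupling}. You instead pass through the polar decomposition $v=\abs{v}\e^{i\varphi}$ and compare $v$ with $\abs{v}$: this obliges you to verify the extra (true, but unnecessary for the stated conclusion) fact that $\varphi$ is constant, but it makes your reverse inequality cheaper---a real groundstate $w_0$ of \eqref{eq:neweq} automatically satisfies $\abs{D_Aw_0}^2=\abs{Dw_0}^2+\abs{A}^2w_0^2$, so Proposition~\ref{propSymmetryModified} is not invoked there. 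One minor point: your equality case in the polar inequality should also record that $Dv=0$ a.e.\ on $\{v=0\}$, but since you later show $\abs{v}>0$ everywhere this has no effect on the argument.
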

\begin{proof}
We set 
\begin{equation*}
a = - \frac{\displaystyle\int_{\mathbb{R}^N} x \abs{u(x)}^2 \dif x}{\displaystyle \int_{\mathbb{R}^N} \abs{u (x)}^2 \dif x}
\end{equation*}
to obtain \eqref{eq:moment}, that is 
\[
 \int_{\mathbb{R}^N} x \abs{\tau^A_a v (x)}^2 \dif x = 0.
\]
The function $v = \tau^A_a u$ is then a groundstate of \eqref{eqNLSEMag} verifying the assumptions of Proposition~\ref{prop:symmetric-solution}.

We first note that by the symmetry properties of Proposition~\ref{prop:symmetric-solution} and the decoupling property of Lemma~\ref{prop:symmetriesCoupling}, we have for every \(t \in (0, \infty)\), 
\[
 \mathcal{I}_A (t v) = \Tilde{\mathcal{I}}_{\abs{A}^2} (t v),
\]
and thus 
\[
 \max_{t \ge 0} \mathcal{I}_A (t v) \ge \inf \, \bigl\{ \max_{t \ge 0} \Tilde{\mathcal{I}}_{\abs{A}^2} (t w) 
 \st w \in \Tilde{H}^1_{\abs{A}^2} (\Rset^N) \setminus \{0\} \bigr\}.
\]

On the other hand, if \(\tilde{v}\) is a groundstate of \eqref{eq:neweq}, then by the symmetry properties of groundstates of Proposition~\ref{propSymmetryModified} and again by the decoupling property of Lemma~\ref{prop:symmetriesCoupling}, for every \(t \in [0, \infty)\),
\[
 \mathcal{I}_A (t \tilde{v}) = \Tilde{\mathcal{I}}_{\abs{A}^2} (t \tilde{v}),
\]
and thus 
\[
 \max_{t \ge 0} \Tilde{\mathcal{I}}_{\abs{A}^2}(t \tilde{v}) \ge \inf\, \bigl\{ \max_{t \ge 0}  \mathcal{I}_A (t w) 
 \st w \in H^1_{A} (\Rset^N,\C) \setminus \{0\} \bigr\}.
\]
This implies that the groundstates levels of both problems coincide and that 
$v=\tau_a^A u$ is a groundstate of \eqref{eq:neweq}.
\end{proof}

\begin{proof}[Proof of Theorem~\ref{theoremSymmetry}]
The conclusion follows from the proof of Proposition~\ref{proposition:realGS}, in which the required symmetry was deduced from Proposition~\ref{prop:symmetric-solution}, and by using the uniqueness result of Theorem~\ref{theoremUniqueness}.
\end{proof}

\section{Asymptotic behavior of groundstates} \label{section:asymptotic}

In this section we first prove Theorem~\ref{thm:asymptotic} about the equivalence of the asymptotics of the solutions of the magnetic semilinear Schr\"odinger equation \eqref{eqNLSEMag} and a linear problem. We then apply this result to describe completely the asymptotics in the planar case \(N = 2\) and obtain a Gaussian upper bound in higher dimensions \(N \ge 3\).

\subsection{Comparison with solution of a linear problem}
Theorem~\ref{thm:asymptotic} will follow from the following proposition about the 
equivalence of asymptotics between linear and semilinear problems.

\begin{proposition}%
[Decay of solutions to a semilinear problem with a potential]
\label{prop:nonLinDecay}
For $R > 0$, let \(V \in C (\Rset^N \setminus B_R)\) be such that \(\inf_{\Rset^N \setminus B_R} V > 0\)
and \(p \in (2, \infty)\).
If \(u, v \in C^2 (\Rset^N \setminus B_R)\) satisfy 
\[
\left\{
\begin{aligned}
 -\Delta u + V u & = u^{p - 1} & & \text{in \(\Rset^N \setminus \overline{B_R }\)},\\
 u & > 0 & & \text{on \(\Rset^N \setminus B_R\)},\\
 u (x) & \to 0 & & \text{as \(\abs{x} \to \infty\)}
\end{aligned}
\right.
\]
and 
\[
\left\{
\begin{aligned}
 -\Delta v + V v & = 0 & & \text{in \(\Rset^N \setminus \overline{B_R}\)},\\
 v & > 0 & & \text{on \(\Rset^N \setminus B_R\)},\\
 v (x) & \to 0 & & \text{as \(\abs{x} \to \infty\)},
\end{aligned}
\right.
\]
then there exist \(C, c \in (0, \infty)\) such that 
\[
  c v \le u \le C v \qquad \text{in \(\Rset^N \setminus B_R\)}. 
\]
\end{proposition}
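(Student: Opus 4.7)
Plan.

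The proof proceeds by maximum-principle comparison in an exterior region \(\{|x| \ge R'\}\), where \(R' > R\) is chosen large enough that the nonlinear term is controlled. First I fix \(\varepsilon_0 \in \bigl(0,\tfrac12 \inf_{\R^N \setminus B_R} V\bigr)\) and pick \(R' > R\) such that \(u(x)^{p-2} \le \varepsilon_0\) for every \(|x| \ge R'\); this is possible since \(u\) is continuous and tends to zero at infinity. On the compact annulus \(\{R \le |x| \le R'\}\) the positive continuous functions \(u\) and \(v\) automatically satisfy \(cv \le u \le Cv\) for some positive constants, so the whole difficulty is to establish such a two-sided estimate on \(\{|x| \ge R'\}\).

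For the lower bound \(u \ge cv\), I use the linear operator \(\widetilde{\mathcal{L}} := -\Delta + (V - u^{p-2})\), whose zeroth-order coefficient is \(\ge \varepsilon_0 > 0\) on the region of interest. Rewriting the equation for \(u\) gives \(\widetilde{\mathcal{L}} u = 0\), and a direct computation yields \(\widetilde{\mathcal{L}} v = -u^{p-2} v \le 0\), so \(v\) is a subsolution. Choosing \(c > 0\) small enough that \(cv \le u\) on the compact sphere \(\partial B_{R'}\), the function \(w := cv - u\) satisfies \(\widetilde{\mathcal{L}} w = -c u^{p-2} v \le 0\), is nonpositive on \(\partial B_{R'}\), and vanishes at infinity. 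Applying the weak maximum principle for subsolutions of a uniformly elliptic operator with positive zeroth-order coefficient on the unbounded exterior (obtained by the standard principle on annular regions \(R' \le |x| \le R''\) and sending \(R'' \to \infty\)) excludes any interior positive maximum of \(w\), whence \(w \le 0\) and thus \(u \ge cv\).

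For the upper bound \(u \le Cv\), the analogous one-step comparison with any single linear operator fails, because no choice of potential makes \(u\) a subsolution while simultaneously turning \(v\) into a supersolution. I would instead pass to the ratio \(\phi := u/v\), which, upon substituting \(u = \phi v\) into the equation for \(u\) and using \(\Delta v = Vv\), satisfies the weighted elliptic equation
\[
 -\operatorname{div}(v^2 \nabla \phi) = u^{p-2}\, v^2\, \phi.
\]
Since the right-hand side is nonnegative, \(\phi\) is a supersolution of the degenerate operator \(-\operatorname{div}(v^2 \nabla \cdot)\), which is uniformly elliptic on any compact subset of \(\{v > 0\}\); it is also continuous and positive on the compact sphere \(\partial B_{R'}\). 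The main obstacle is to bound \(\phi\) from above on the exterior, since the supersolution property only yields an automatic lower bound on \(\phi\). I expect this to be the hardest step and would approach it either via a Harnack-chaining argument over the spherical shells \(\{2^k R' \le |x| \le 2^{k+1} R'\}\)---with the constants controlled by the crucial fact that the potentials \(V - u^{p-2}\) and \(V\) agree at infinity at rate \(u^{p-2} \to 0\), so that \(u\) and \(v\) share the same leading asymptotic decay rate---or alternatively via a Green-function representation \(u = u_{\mathrm{lin}} + G_V \ast u^{p-1}\), where \(u_{\mathrm{lin}}\) is the solution of the linear exterior Dirichlet problem for \(-\Delta + V\) with the boundary data of \(u\): by comparison of boundary values \(u_{\mathrm{lin}} \le C v\), and the convolution term decays at least as fast as \(v\) thanks to the superlinearity \(p > 2\), permitting a bootstrap. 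Either route delivers \(u \le Cv\) on \(\{|x| \ge R'\}\), and combining with the annulus estimate yields the stated inequality on all of \(\R^N \setminus B_R\).
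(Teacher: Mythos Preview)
Your lower bound is fine and essentially matches the paper's argument, though the paper does it a touch more simply: since \(-\Delta u + Vu = u^{p-1} \ge 0\), the function \(u\) is itself a supersolution of the linear operator \(-\Delta + V\), for which \(v\) is an exact solution; then \(cv \le u\) on \(\partial B_R\) extends to the exterior by the maximum principle. Your version with the shifted operator \(-\Delta + (V - u^{p-2})\) is equally valid.

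The genuine gap is in the upper bound. You explicitly leave it as a plan (``I expect this to be the hardest step and would approach it either via \dots'') and neither the Harnack-chaining route nor the Green-function bootstrap is carried out; each would require nontrivial uniform control of constants across shells or of the Green kernel for \(-\Delta + V\), and you have not supplied this. More importantly, your premise that ``the analogous one-step comparison with any single linear operator fails'' is incorrect: a direct comparison \emph{does} work once you replace \(u\) by a suitable nonlinear modification of it. The paper's device is to take \(u + \lambda u^{p-1}\). Using \(-\Delta(u^{p-1}) \le -(p-1)u^{p-2}\Delta u\) (here \(p>2\) is used) together with the equation for \(u\), one finds
\[
 -\Delta\bigl(u + \lambda u^{p-1}\bigr) + V\bigl(u + \lambda u^{p-1}\bigr)
 \le -u^{p-1}\bigl(\lambda(p-2)V - 1 - \lambda(p-1)u^{p-2}\bigr).
\]
Since \(\inf V > 0\) and \(u \to 0\) at infinity, for \(\lambda\) large and \(|x| \ge R'\) large the bracket is nonnegative, so \(u + \lambda u^{p-1}\) is a \emph{subsolution} of the very operator \(-\Delta + V\) for which \(v\) is a solution. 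Matching on \(\partial B_{R'}\) and applying the maximum principle gives \(u \le u + \lambda u^{p-1} \le Cv\) in one stroke. This is the missing idea; with it the proof is a few lines, and the more elaborate machinery you sketch becomes unnecessary.
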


Here \(B_R \) denotes the \emph{open ball} of radius \(R\) centered in $0$.

\begin{proof}[Proof of Proposition~\ref{prop:nonLinDecay}]%
We first observe that 
\[
 -\Delta u + V u \ge 0 \qquad \text{in \(\Rset^N \setminus \overline{B_R}\)}.
\]
Since the functions \(u\) and \(v\) are both continuous and positive on \(\partial B_R \),
by Weiserstrass's Theorem, there exists \(c \in (0, \infty)\) 
such that \(cv \le u\) on \(\partial B_R\). The inequality extends to \(\Rset^N \setminus B_R\) by the maximum principle.

For the converse inequality, we observe that, since \(p > 2\),
\[
 -\Delta \bigl(u^{p - 1}\bigr) = - (p - 1) u^{p - 2} \Delta u - (p - 1) (p - 2) u^{p - 3} \abs{\nabla u}^2 \le - (p - 1) u^{p - 2} \Delta u\qquad \text{in \(\Rset^N \setminus \overline{B_R}\)}.
\]
Therefore, we have for \(\lambda > 0\), 
\[
 -\Delta \bigl(u + \lambda u^{p - 1}\bigr) + V \bigl(u + \lambda u^{p - 1}\bigr)
 \le -u^{p - 1} (\lambda (p - 2) V - 1 - \lambda (p-1) u^{p - 2}) \qquad \text{in \(\Rset^N \setminus \overline{B_R}\)}.
\]
Since \(\lim_{\abs{x} \to \infty} u (x) = 0\) and since \(u\) is continuous, we have, if \(\lambda\) and $R$ are large enough,
\[
 -\Delta \bigl(u + \lambda u^{p - 1}\bigr) + V \bigl(u + \lambda u^{p - 1}\bigr)
 \le 0\qquad \text{in \(\Rset^N \setminus \overline{B_R}\)}.
\]
We choose \(C \in (0, \infty)\) sucht that \(u + \lambda u^{p - 1} \le C v\). 
By the maximum principle, it follows that 
\(u \le u + \lambda u^{p - 1} \le C v\) in \(\R^N \setminus B_R\), and the conclusion follows.
\end{proof}

We are now in position to prove Theorem~\ref{thm:asymptotic}.

\begin{proof}[Proof of Theorem~\ref{thm:asymptotic}]
From the reformulation of Proposition~\ref{proposition:realGS} of the nonlinear Schr\"o\-din\-ger equation \eqref{eqNLSEMag} as the decoupled problem~\eqref{eq:neweq}, the solution \(u\) is a groundstate of \eqref{eq:neweq}. In view of the properties of the groundstates of \eqref{eq:neweq} of Proposition~\ref{propSymmetryModified}, we can assume that \(u\) is real and positive. The conclusion follows then from Proposition~\ref{prop:nonLinDecay}.
\end{proof}

\subsection{The planar case}

The asymptotics can be described precisely in the two-dimensio\-nal case. To prove this, we need to find the exact asymptotics of the linear problem, in order to use Proposition~\ref{prop:nonLinDecay}, we rely on the next lemma  \cite{MorozVanSchaftingen2013}*{Proposition 6.1} (see also \cite{Agmon1984}*{Theorem 3.3})

\begin{lemma}
\label{lemmaAsymptoticsLinear2d}
Let $R > 0$ and let \(V (r) = 1 + \frac{B^2}{4} r^2\), then there exists a nonnegative radial function $H : \mathbb{R}^2 \setminus \overline{B_R} \to \mathbb{R}$ such that
\begin{equation*}
- \Delta H (x)+ V (\abs{x}) H (x) = 0, \qquad \text{for  } x \in \mathbb{R}^2 \setminus \overline{B_{R}},
\end{equation*}
and $H$ has the following asymptotics as \(\abs{x} \to \infty\)
\[
  H(x) = \frac{\exp \Bigl(-\displaystyle \int_0^{\abs{x}} \sqrt{V (s)} \dif s\Bigr)}{\abs{x}^{\frac{1}{2}} V (\abs{x}) ^{\frac{1}{4}}} \bigl(1 + o (1)\bigr),
\]
where for every \(x \in \Rset^2\)
\[
 \int_0^{\abs{x}} \sqrt{V (s)} \dif s
 = \tfrac{1}{2} \abs{x} \sqrt{1 + \tfrac{B^2}{4} \abs{x}^2} + \tfrac{1}{\abs{B}} 
 \ln \Bigl(\tfrac{\abs{B}}{2} \abs{x} + \sqrt{1 + \tfrac{B^2}{4} \abs{x}^2}\Bigr).
\]
\end{lemma}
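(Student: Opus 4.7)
The plan is to reduce to a radial ODE and invoke a classical Liouville--Green (WKB) asymptotic expansion. Looking for a radial solution $H(x) = h(\abs{x})$ turns the PDE into
\[
 -h''(r) - \tfrac{1}{r} h'(r) + V(r)\,h(r) = 0, \qquad r > R.
\]
The Liouville substitution $h(r) = r^{-1/2} g(r)$ clears the first-order term and produces the one-dimensional Schr\"odinger equation
\[
 g''(r) = Q(r)\, g(r), \qquad Q(r) \defeq V(r) - \tfrac{1}{4 r^2}.
\]
Since $V(r) = 1 + \tfrac{B^2}{4}r^2$, the potential $Q$ is smooth, positive for $r$ large, tends to infinity, and satisfies the regularity condition $\abs{Q'} / Q^{3/2} \to 0$ at infinity. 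This is exactly the setting of Proposition~6.1 of \cite{MorozVanSchaftingen2013} (or the Liouville--Green theorem in Olver's form), which produces a positive recessive solution with the explicit asymptotic
\[
 g(r) = Q(r)^{-1/4} \exp\!\Bigl(-\int_{R}^{r} \sqrt{Q(s)}\dif s \Bigr)\bigl(1 + o(1)\bigr) \quad \text{as } r \to \infty.
\]

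Next I would compare the quantities built from $Q$ with those built from $V$. Because $V - Q = 1/(4 r^2)$, we have $\sqrt{V(s)} - \sqrt{Q(s)} = O(s^{-3})$, so
\[
 \int_{R}^{r} \sqrt{Q(s)} \dif s = \int_{0}^{r} \sqrt{V(s)} \dif s + C + o(1)
\]
for some constant $C \in \R$; likewise $Q(r)^{-1/4} = V(r)^{-1/4}(1 + o(1))$. Multiplying the recessive solution by a suitable positive constant (permitted by linearity and homogeneity of the ODE) and returning to $h(r) = r^{-1/2} g(r)$ yields
\[
 H(x) = h(\abs{x}) = \frac{\exp \bigl(-\int_0^{\abs{x}} \sqrt{V(s)} \dif s \bigr)}{\abs{x}^{1/2}\, V(\abs{x})^{1/4}}\bigl(1 + o(1)\bigr).
\]
In particular, $H$ is positive for $\abs{x}$ large; enlarging $R$ if necessary (or patching with a truncation supported outside a larger ball) gives a nonnegative solution on all of $\R^2 \setminus \overline{B_R}$.

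Finally, the explicit closed form of the phase integral is obtained by the standard antiderivative
\[
 \int \sqrt{1 + a^2 s^2}\dif s = \tfrac{s}{2}\sqrt{1 + a^2 s^2} + \tfrac{1}{2 a}\,\mathrm{arcsinh}(a s) + \mathrm{const.}
\]
applied with $a = \abs{B}/2$, together with $\mathrm{arcsinh}(t) = \ln\bigl(t + \sqrt{1 + t^2}\bigr)$; this yields the formula stated in the lemma.

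The main obstacle is the rigorous WKB step, but this is a classical statement that can be quoted directly from the cited references \cite{MorozVanSchaftingen2013}*{Proposition 6.1} and \cite{Agmon1984}*{Theorem 3.3}; the rest of the argument is elementary calculus and linearity of the ODE.
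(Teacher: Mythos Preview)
Your proposal is correct and follows essentially the same approach as the paper: both rely on the Liouville--Green (WKB) sub/super-solution method underlying \cite{MorozVanSchaftingen2013}*{Proposition~6.1}. The only cosmetic difference is that the paper writes down the sub/super-solutions $\Phi_\tau(x) = \abs{x}^{-1/2} V(\abs{x})^{-1/4}\exp\bigl(-\tau\abs{x}^{-\beta}/\beta - \int_\rho^{\abs{x}}\sqrt{V}\bigr)$ directly in terms of $V$ and verifies the sign of $-\Delta\Phi_\tau + V\Phi_\tau$ by explicit computation, whereas you first perform the Liouville reduction $h = r^{-1/2}g$ to the one-dimensional equation $g'' = Qg$, quote the WKB asymptotic for $g$, and then compare $\sqrt{Q}$ with $\sqrt{V}$ a posteriori; your integrability estimate $\sqrt{V}-\sqrt{Q} = O(r^{-3})$ is exactly what makes the two presentations equivalent.
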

The last integral follows by direct calculation and using the identity $\operatorname{arsinh} s = \log (s + \sqrt{1+s^2})$. We adapt the proof to our particular case for the reader's convenience.

\begin{proof}
To prove this, we proceed in as the proof of \cite{MorozVanSchaftingen2013}*{Proposition 6.1}  and we introduce for \(\tau \in \R\) the functions \(\Phi_\tau : \mathbb{R}^{2} \setminus \overline{B_\rho} \to \mathbb{R}\) defined for \(x \in \mathbb{R}^{2} \setminus \overline{B_\rho} \to \mathbb{R}\) by
\[
\Phi_\tau(x) = \abs{x}^{- \frac{1}{2}} V(\abs{x})^{- \frac{1}{4}} \exp \Bigl(\frac{- \tau \abs{x}^{- \beta}}{\beta} - \int_{\rho}^{\abs{x}} \sqrt{V(s)} \dif s \Bigr),
\]
where $\beta > 0$ is a parameter that will be fixed later. By computing explicitly the Laplacian, we get
\[
- \Delta \Phi_\tau + V(\abs{x}) \Phi_\tau = \Bigl( \frac{2 \tau (V(\abs{x}))^{1/2} + \omega_\tau(\abs{x})}{\abs{x}^{1+\beta}} \Bigr) \Phi_\tau,
\]
where $\omega_\tau : \mathbb{R}^{2} \setminus \overline{B_\rho} \to \mathbb{R}$ is given by
\begin{multline*}
\omega_\tau(\abs{x}) = - \frac{\,\abs{x}^{\beta - 1}}{4} + \frac{\tau (\beta + 1)}{\abs{x}} + \frac{V''(\abs{x}) \abs{x}^{1+\beta}}{4 V(\abs{x})} - \frac{5 (V'(\abs{x}))^2 \abs{x}^{1 + \beta}}{16 V^2(\abs{x})} \\
+ \frac{\tau V'(\abs{x})}{2 V(\abs{x})} - \frac{\tau^{2}}{\,\abs{x}^{\beta + 1}}.
\end{multline*}
If we choose $0 < \beta < 1$, we obtain that $\lim_{\abs{x} \to + \infty} \omega_\tau(\abs{x}) = 0$, thanks to the explicit shape of $V$.

By choosing $\tau_- < 0$ and $\tau_+ > 0$, we conclude that $\Phi_{\tau_-}$ and $\Phi_{\tau_+}$ are respectively  sub- and super-solutions of the equation in $\R^2 \setminus B_R$ for $R$ large enough. Moreover,
\[
\lim_{\abs{x} \to + \infty} \frac{\Phi_{\tau_-}(\abs{x})}{\Phi_{\tau_+}(\abs{x})} = 1.
\]
To conclude to the existence of a solution $H$, we proceed as in the proof of \cite{MorozVanSchaftingen2013}*{Proposition 6.1}. The asymptotic behavior at infinity follows from the one of $\Phi_{\tau_-}$ and $\Phi_{\tau_+}$.
\end{proof}

\begin{proposition}
\label{proposition2dAsymptotics}
If $A \in \Lin(\R^N,\bigwedge^1 \R^N)$ is given by $A (x)[v] = \frac{B}{2} x \wedge v$ and $ \abs{B} \le \varepsilon$ with $\varepsilon > 0$ given in Theorem~\ref{theoremUniqueness} and if $u$ is a solution of \eqref{eqNLSEMag} such that $\mathcal{I}_{A} (u) \le \mathcal{E} (0)+\varepsilon$,
then there exist \(a \in \Rset^2\) and \(c \in \C \setminus \{0\}\) such that, as \(\abs {x} \to \infty\),
\[  
u (x)
  = \frac{\exp \bigl(- \frac{\abs{B}\,\abs{x - a}^2}{4}\bigr)}{\abs{x - a}^{\frac{1}{2}} \bigl(1 + \abs{B} \, \abs{x - a}\bigr)^{\frac{1}{2}+\frac{1}{\abs{B}}}} (c + o (1)),
\]
\end{proposition}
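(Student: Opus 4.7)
The strategy is to reduce to a real radial scalar problem, exploit the explicit asymptotic of the linearised equation provided by Lemma~\ref{lemmaAsymptoticsLinear2d}, and promote the two-sided bound of Proposition~\ref{prop:nonLinDecay} to a precise asymptotic equivalence via a one-dimensional variation-of-parameters argument.

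First, I would invoke Theorem~\ref{theoremSymmetry} together with Proposition~\ref{proposition:realGS} to obtain some $a \in \R^2$ such that the magnetically translated function $v := \tau^A_{-a} u$ is a groundstate of the decoupled equation \eqref{eq:neweq} with potential $V(x) = 1 + \tfrac{B^2}{4}\abs{x}^2$. Proposition~\ref{propSymmetryModified} then forces $v = \e^{i\theta_0}\abs{v}$ for some $\theta_0 \in \R$, with $\abs{v}$ smooth, radial and positive. Writing $u(x) = \e^{-iA(a)[x-a]+i\theta_0} F(\abs{x-a})$ with $F := \abs{v}$, the problem reduces to finding the precise leading-order behaviour of the real positive radial function $F$, which satisfies
\[
 -F'' - \tfrac{F'}{r} + V(r) F = F^{p-1}, \qquad r > 0.
\]

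Second, I would apply Lemma~\ref{lemmaAsymptoticsLinear2d} to produce a positive radial solution $H$ of $-\Delta H + V H = 0$ on $\R^2 \setminus \overline{B_R}$. Evaluating the integral by the substitution $s \mapsto |B|s/2$ and using $\operatorname{arsinh}(t) = \ln(t + \sqrt{1+t^2})$ gives
\[
 \int_0^r \sqrt{V(s)}\,ds = \tfrac{r}{2}\sqrt{1+\tfrac{B^2 r^2}{4}} + \tfrac{1}{\abs{B}} \operatorname{arsinh}\bigl(\tfrac{\abs{B} r}{2}\bigr).
\]
Expanding each term for large $r$ and combining with the prefactor $r^{1/2} V(r)^{1/4}$ transforms the asymptotic stated in Lemma~\ref{lemmaAsymptoticsLinear2d} into
\[
 H(r) = \frac{\exp\bigl(-\tfrac{\abs{B} r^2}{4}\bigr)}{r^{1/2}(1+\abs{B} r)^{\frac{1}{2}+\frac{1}{\abs{B}}}}\bigl(c_0 + o(1)\bigr)
\]
for some $c_0 > 0$, which is exactly the profile appearing in the statement (up to a constant). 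Applying Proposition~\ref{prop:nonLinDecay} to $F$ and $H$ with this $V$ yields constants $0 < c_1 \le c_2 < \infty$ such that $c_1 H(r) \le F(r) \le c_2 H(r)$ for all large $r$.

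The main obstacle is to upgrade this two-sided bound to an exact asymptotic $F(r)/H(r) \to c_* > 0$. For this I would use variation of parameters in the radial linear operator $L := -\partial_{rr} - \tfrac{1}{r}\partial_r + V(r)$, whose homogeneous equation has the decaying solution $H$ and a second linearly independent solution $\widetilde H$ growing like $\e^{+\abs{B} r^2/2}$; by Abel's identity, their Wronskian satisfies $r(H\widetilde H' - H'\widetilde H) \equiv \text{const}$, so that $H(r)\widetilde H(r) = O(1/r)$. Treating $F^{p-1}$ as a known source in $LF = F^{p-1}$ and imposing the decay $F \to 0$, I would obtain a Green's function representation
\[
 F(r) = c_* H(r) - H(r)\int_r^{\infty} s\,\widetilde H(s)\, F(s)^{p-1}\,ds + \widetilde H(r)\int_r^{\infty} s\,H(s)\, F(s)^{p-1}\,ds,
\]
where $c_* \in \R$ is fixed by the absence of a $\widetilde H$-component in the decaying solution. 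Since $F \le c_2 H$ and $p > 2$, both tail integrals converge absolutely, and each correction term is of order $O(H(r)^{p-1}) = o(H(r))$ by the Gaussian decay of $H$. The lower bound $F \ge c_1 H$ forces $c_* > 0$, hence $F(r)/H(r) \to c_* > 0$.

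Combining this limit with the asymptotic of $H$ from step two yields $F(r) = (c_* c_0 + o(1))\,\exp(-\abs{B}r^2/4)\,r^{-1/2}(1+\abs{B}r)^{-1/2-1/\abs{B}}$. Re-absorbing the constant phase $\e^{i\theta_0}$ (and, where relevant, the unimodular factor from the magnetic translation) into the complex constant $c \ne 0$ gives the stated formula. The delicate point throughout is the third step: producing not just the correct \emph{order} of decay but the correct \emph{multiplicative constant}, which is why the Green's function representation and the super-exponential gap between $F^{p-1}$ and $H$ are essential.
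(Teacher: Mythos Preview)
Your reduction to a positive radial profile via Theorem~\ref{theoremSymmetry}, Proposition~\ref{proposition:realGS} and Proposition~\ref{propSymmetryModified}, and your simplification of the asymptotic of \(H\) from Lemma~\ref{lemmaAsymptoticsLinear2d}, coincide with the paper's proof. The genuine difference is in the third step, where you upgrade the two-sided bound \(c_1 H \le F \le c_2 H\) from Proposition~\ref{prop:nonLinDecay} to an actual limit \(F/H \to c_* > 0\).

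You do this by an explicit one-dimensional Green's function/variation-of-parameters representation using the growing homogeneous solution \(\widetilde H\), and then estimate the two tail integrals using the Gaussian gap between \(H^{p-1}\) and \(H\); this is correct and the estimates go through as you outline. The paper instead avoids \(\widetilde H\) entirely: it observes that, since \(F\) and \(H\) are radial, the comparison constants in Proposition~\ref{prop:nonLinDecay} on \(\Rset^2 \setminus B_R\) are fixed by the single values on \(\partial B_R\), namely \(c_R = F(R)/H(R)\) for the supersolution bound and \(C_R = \bigl(F(R) + \lambda F(R)^{p-1}\bigr)/H(R)\) for the subsolution bound; since \(F(R) \to 0\) one gets \(C_R - c_R \to 0\) as \(R \to \infty\), which squeezes \(F/H\) to a limit. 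The paper's route is shorter and re-uses Proposition~\ref{prop:nonLinDecay} directly, while yours is more self-contained and would transplant to settings where one does not have a one-parameter boundary (at the cost of constructing \(\widetilde H\) and controlling the Wronskian).
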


Here, $a \wedge b \defeq a_1 b_2 - a_2 b_1$ for $a = (a_1, a_2), b = (b_1, b_2) \in \R^2$. The magnetic potential $A$ is therefore linear and skew-symmetric. 

\begin{proof}[Proof of Proposition~\ref{proposition2dAsymptotics}]
We first observe that if the solution \(u\) is radial, then for \(R > 0\) sufficiently large
\(C\) and \(c\) become close to each other in the conclusion of Proposition~\ref{prop:nonLinDecay} 
and thus the superior and inferior limits coincide.

From Lemma~\ref{lemmaAsymptoticsLinear2d} on the asymptotics of solutions to the linear problem, we deduce
\[
  u (x)
  = \frac{\exp \bigl(- \frac{\abs{x - a}}{2} \sqrt{1 + \frac{B^2}{4} \abs{x - a}^2}\bigr)}{\abs{x - a}^{1/2} (1 + \frac{B^2}{4} \abs{x - a}^2)^{\frac{1}{4}} (\sqrt{1 + \frac{B^2}{4} \abs{x - a}^2} +  \frac{\abs{B}}{2} \abs{x - a})^\frac{1}{\abs{B}}} (c + o (1)),
\]
and the conclusion follows by standard properties of limits and in view of the fact that, as \(\abs{x - a} \to \infty\) 
\[
\begin{split}
  \frac{\abs{x - a}}{2} \sqrt{1 + \frac{B^2}{4} \abs{x - a}^2}
  &=\frac{\abs{B} \, \abs{x - a}^2}{4} \sqrt{1 + \frac{4}{B^2 \abs{x - a}^2}}\\
  &= \frac{\abs{B} \, \abs{x - a}^2}{4} \biggl(1  + \frac{2}{B^2 \abs{x - a}^2} + O \Bigl(\frac{1}{\abs{x - a}^4} \Bigr)\biggr)\\
  &= \frac{\abs{B} \, \abs{x - a}^2}{4} + \frac{1}{2 \abs{B}} + o (1),
\end{split}
\]
and that, again as \(\abs{x - a} \to \infty\),
\[
\begin{split}
 \ln \Bigl(\frac{\abs{B}\abs{x - a}}{2} + \sqrt{1 + \frac{B^2 \abs{x - a}^2}{4}} \Bigr)
 &= \ln (\abs{B} \, \abs{x - a})
 + \ln \Bigl(\frac{1}{2} + \frac{1}{2}\sqrt{1 + \frac{4}{B^2 \abs{x - a}^2}}\Bigr)\\
 &= \ln (\abs{B} \, \abs{x - a}) + o (1).\qedhere
\end{split}
\]
\end{proof}
We remark that in the limit $|B| \to 0$, we recover the asymptotics of the groundstates of the nonlinear Schrödinger equation without magnetic field.

\subsection{Higher-dimensional case}

In higher dimensions \(N \ge 3\), the operator \(-\Delta + (1 + \abs{A}^2)\) is \emph{anisotropic} in general, that is, it does not commute with linear isometries of \(\Rset^N\). 
It is thus technically difficult to have closed-form expression for the asymptotics. 
It is however possible to obtain Gaussian asymptotics in the transversal directions to the magnetic field.

\begin{proposition}
\label{propositionHigherDimensionGaussian}
If \(u\) is a groundstate solution of \eqref{eqNLSEMag} and if \(A \in \Lin (\Rset^N, \bigwedge^1 \Rset^N)\) can be written as
\[
  \abs{A (x)}^2 = \sum_{j = 1}^k \lambda_j^2 \abs{P_{W_j} (x)}^2,
\]
with \(\lambda_1, \dotsc, \lambda_k \in \mathbb{R}_0\) and \(\Rset^N = W_0 \oplus \dotsb \oplus W_k\), then there exists $c > 0$ such that
\[
 |u (x)| \le (c + o (1)) \exp \Bigl(- \sum_{j = 1}^k \frac{\abs{\lambda_j}}{2} \abs{P_{W_j} (x)}^2\Bigr).
\]
\end{proposition}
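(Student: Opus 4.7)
My plan is to reduce the claim to a standard maximum principle comparison with an explicit Gaussian supersolution of the associated linear equation. The first step is to apply Proposition~\ref{proposition:realGS} to pass, via a magnetic translation, to a groundstate of the decoupled problem \eqref{eq:neweq}; by Proposition~\ref{propSymmetryModified} this groundstate can be chosen real, positive and symmetric around the origin, and it satisfies \(-\Delta u + (1 + \abs{A}^2) u = u^{p-1}\) on \(\Rset^N\) with \(u(x) \to 0\) as \(\abs{x} \to \infty\). Theorem~\ref{thm:asymptotic} (applied with the implicit translation \(a = 0\)) then yields an \(R > 0\) and a positive solution \(v\) of \(-\Delta v + (1+\abs{A}^2) v = 0\) on \(\Rset^N \setminus B_R\) with \(v \to 0\) at infinity and \(u \le C v\) on \(\Rset^N \setminus B_R\) for some \(C > 0\).

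Next I would introduce the Gaussian candidate
\[
\Phi(x) \defeq \exp\Bigl(-\sum_{j=1}^{k} \tfrac{\abs{\lambda_j}}{2} \abs{P_{W_j}(x)}^2\Bigr),
\]
and check by direct differentiation that
\[
-\Delta \Phi + (1+\abs{A}^2)\Phi = \Bigl(1+\sum_{j=1}^k \abs{\lambda_j}\dim W_j\Bigr)\Phi > 0.
\]
The crux is that \(W_1, \dotsc, W_k\) are pairwise orthogonal subspaces (distinct eigenspaces of the self-adjoint operator \(-\Hat{A}^2\)), which yields the identity
\[
\Bigl\lvert\sum_{j=1}^k \abs{\lambda_j} P_{W_j}(x)\Bigr\rvert^2 = \sum_{j=1}^k \lambda_j^2 \abs{P_{W_j}(x)}^2 = \abs{A(x)}^2.
\]
From this one reads off \(\abs{\nabla \Phi}^2 = \Phi^2 \abs{A}^2\) and \(\Delta \Phi = \Phi(\abs{A}^2 - \sum_{j=1}^k \abs{\lambda_j}\dim W_j)\), so the quadratic potential \(\abs{A}^2\Phi\) cancels exactly against \(\abs{\nabla \Phi}^2/\Phi\) and only the positive constant divergence contribution survives.

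To conclude, I would pick \(R' > R\) sufficiently large and \(C' > 0\) so that \(v \le C'\Phi\) on the compact set \(\partial B_{R'}\) (possible because both functions are continuous and strictly positive there). Setting \(w \defeq v - C'\Phi\), we have \(-\Delta w + (1+\abs{A}^2) w < 0\) in \(\Rset^N \setminus \overline{B_{R'}}\), \(w \le 0\) on \(\partial B_{R'}\) and \(w \to 0\) at infinity; the standard exterior-domain maximum principle (valid since \(1+\abs{A}^2 > 0\)) gives \(v \le C'\Phi\) on \(\Rset^N \setminus B_{R'}\), and combined with \(u \le Cv\) this yields \(u(x) \le c\,\Phi(x)\) for large \(\abs{x}\) with \(c = CC'\), which is the announced estimate. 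The main obstacle will be carrying out the supersolution computation cleanly: the exact algebraic cancellation that lets the Gaussian ansatz work depends crucially on the orthogonal decomposition \(\Rset^N = W_0 \oplus \dotsb \oplus W_k\) together with the explicit formula for \(\abs{A(x)}^2\) recalled in Remark~\ref{rem:A}. Everything else follows the pattern of Proposition~\ref{prop:nonLinDecay}.
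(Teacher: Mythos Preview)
Your proposal is correct and follows essentially the same approach as the paper: reduce to the decoupled problem via Proposition~\ref{proposition:realGS}, invoke Theorem~\ref{thm:asymptotic} to compare \(u\) with a positive solution \(v\) of the exterior linear problem, verify by direct computation that the Gaussian \(\Phi\) satisfies \(-\Delta \Phi + (1+\abs{A}^2)\Phi = (1+\sum_j \abs{\lambda_j}\dim W_j)\Phi > 0\), and conclude by the maximum principle. The paper's own proof records only the supersolution identity and defers the rest to Theorem~\ref{thm:asymptotic}; you have simply made the comparison step explicit.
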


The asymptotic estimates can be written in terms of the square-root of a linear operator as
\[
  |u (x)| \le (1 + o (1)) \exp \Bigl( - \frac{x \cdot \sqrt{A^* \circ A} x}{2} \Bigr),
\]
where \( \sqrt{A^* \circ A}\) is semi-definite positive and \((\sqrt{A^* \circ A})^2 = A^* \circ A = - \Hat{A}^2 \), $\Hat{A}$ being defined in Remark~\ref{rem:A}.

\begin{proof}[Proof of Proposition~\ref{propositionHigherDimensionGaussian}]
This follows from the asymptotics of Theorem~\ref{thm:asymptotic} and the fact that if 
\[
 w (x) = \exp \Bigl(- \sum_{j = 1}^k \frac{\abs{\lambda_j}}{2} \abs{P_{W_j} (x)}^2\Bigr),
\]
then for each \(x \in \Rset^N \setminus \{0\}\), 
\[
 -\Delta w (x) + (1 + \abs{A (x)}^2) w (x)
 = \Bigl(1 + \sum_{j = 1}^k \abs{\lambda_j} \dim W_j \Bigr) w (x) \ge 0.\qedhere
\]
\end{proof}

In particular, if \(N = 3\) and \(B \in \bigwedge^2 \Rset^3 \simeq \Rset^3\), we can take \(\lambda_1 = \abs{B}/2\) and \(\lambda_2 = 0\). We have \(\abs{P_{W_1} (x)} = {\abs{B \times x}}/{{\abs{B}}}\)
and we obtain
\eqref{eq3dDecay}.

\section{Differentiability} \label{sec:diff}

We assume that $\abs{dA} \le \varepsilon$, where \(\varepsilon\) is given by Theorem~\ref{theoremUniqueness} and we choose \(u_A \in H^1_A (\Rset^N, \C)\) to be the \emph{real even} groundstate of \eqref{eqNLSEMag} with magnetic potential $A$, that is, for every $x \in \R^N$
\[
 u_A (-x)=u_A (x). 
\]
Such a normalization always exists since for every $A \in \Lin (\R^N, \bigwedge^1 \R^N)$ skew-symmetric, $(-I)_\# A = A$. Since the problem \eqref{eqNLSEMag} is invariant under rotations of both the function \(u_A\) and the magnetic potential, we shall apply a rotation so that \(A\) has a more convenient structure. We shall thus assume that \(A\) is invariant under \(SO (2)^{\floor{\frac{N}{2}}}\), where each factor acts on \(2\) canonical coordinates of \(\Rset^N\). 

This implies in particular that \(\abs{A}^2\) diagonalizes with respect to the canonical basis, that is, for every \(x = (x_1, \dotsc, x_N) \in \Rset^N\),
\[
 \abs{A (x)}^2 = \sum_{i = 1}^{\floor{\frac{N}{2}}} \lambda_i^2 \bigl(\abs{x_{2i - 1}})^2 + \abs{x_{2 i}}^2\bigr),
\]
with \(\lambda_1, \dotsc, \lambda_{\floor{\frac{N}{2}}} \in \mathbb{R}\). We note that the $\lambda_i$ can possibly be zero.

We define 
\begin{multline*}
 \mathcal{Q} = \Bigl\{ Q : \Rset^N \to \Rset \st \text{there exist \(\alpha_1, \dotsc, \alpha_{\floor{\frac{N}{2}}} \in \Rset\)} \\[-1.5em]
 \text{such that for each \(x \in \Rset^N\), } Q (x) = \sum_{i= 1}^{\floor{\frac{N}{2}}} \alpha_i (\abs{x_{2i - 1}}^2 + \abs{x_{2 i}}^2)\Bigr\}.
\end{multline*}

\subsection{Differentiability of the groundstate}

If we let \(\mathcal{Q}\) denote the \(N\)--dimensional space of diagonal quadratic forms defined above, we then have \(\abs{A}^2 \in \mathcal{Q}\). The space \(\mathcal{Q}\) is a finite dimensional space endowed with a norm \(\norm{\cdot}\). 
Since the space \(\Lin(\mathbb{R}^N, \bigwedge^1 \mathbb{R}^N)\) is also finite dimensional, all the norms on this space are equivalent and give the \(L^2_\mathrm{loc}\) topology that has been used above; we will thus not specify the convergence in the following.

\begin{proposition}[Differentiability of the groundstate with respect to the magnetic field]
\label{propositionDifferentiability}
If $A_* \in \Lin(\mathbb{R}^N, \bigwedge^1 \mathbb{R}^N)$ is such that \(\abs{dA_*}\le \varepsilon\), where \(\varepsilon > 0\) is given in Theorem~\ref{theoremUniqueness} and if \(\abs{A_*}^2 \in \mathcal{Q}\), then 
\[
 u_A = u_{A_*} + w_{A_*}\bigl[\abs{A}^2 - \abs{A_*}^2\bigr] + o \bigl(\big\|\abs{A}^2 - \abs{A_*}^2\big\|\bigr),
\]
in $H^1_{A_*}(\R^N , \C)$ as $A \to A_*$ in $L^2_{\text{loc}}(\R^N)$,
where $w_{\abs{A_*}^2} \in \Lin (\mathcal{Q}, H^1_{A_*}(\R^N , \C))$ 
is the unique linear map such that for every \(Q \in \mathcal{Q}\) 
\begin{equation} 
\label{eq:wA*}
\left\{
\begin{aligned}
-\Delta_{A_*} w_{A_*}[Q] +  \bigl( 1 - (p - 1) \abs{u_{A_*}}^{p - 2} \bigr) w_{A_*}[Q] &= - Q u_{A_*} &&\text{in \(\Rset^N\)},\\
 \scalprod{D_{A_*} u_{A_*}[h]}{w_{A_*}[Q]}_{H^1_{A_*} (\Rset^N,\C)} &= 0
&& \text{for each \(h \in \Rset^N\),}\\
 \scalprod{i u_{A_*}}{w_{A_*}[Q]}_{H^1_{A_*} (\Rset^N,\C)} &= 0.
\end{aligned}
\right.
\end{equation}
\end{proposition}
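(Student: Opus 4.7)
The plan is to reduce the problem to a real decoupled equation exploiting the common symmetry group \(G = SO(2)^{\floor{N/2}}\) shared by all admissible magnetic potentials, and then to apply the implicit function theorem. Because \(\abs{A}^2 - \abs{A_*}^2\) is assumed to lie in \(\mathcal{Q}\), we only consider potentials \(A\) invariant under \(G\); by Proposition~\ref{proposition:realGS} and Proposition~\ref{propSymmetryModified} the normalization of \(u_A\) then identifies it with the unique real, positive, even and \(G\)--invariant groundstate of the decoupled equation \(-\Delta u + (1 + \abs{A}^2) u = u^{p-1}\). I would work in the Hilbert space \(X\) of real, even, \(G\)--invariant functions \(u \in H^1(\R^N,\R)\) with \(\int_{\R^N} (1 + \abs{x}^2) \abs{u}^2 < \infty\), equipped with the natural norm; this ensures that \((V, u) \mapsto (1+V) u\) maps \(\mathcal{Q} \times X\) continuously into \(L^2\).

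Next I would define \(F : \mathcal{Q} \times X \to X'\) by
\[
 \dualprod{F(V, u)}{v} = \int_{\R^N} \nabla u \cdot \nabla v + (1 + V)\, uv - \abs{u}^{p - 2} u v,
\]
which is of class \(C^1\) since \(p > 2\) and satisfies \(F(\abs{A_*}^2, u_{A_*}) = 0\). Its partial derivative in \(u\) at \((\abs{A_*}^2, u_{A_*})\) is the operator \(L_* : X \to X'\) given by \(L_* w = -\Delta w + (1 + \abs{A_*}^2) w - (p-1) u_{A_*}^{p - 2} w\). The operator \(L_*\) is Fredholm of index zero as a compact perturbation of the coercive operator \(-\Delta + 1 + \abs{A_*}^2\), and its kernel on \(X\) is trivial: by Proposition~\ref{propositionNonDegeneracyA}, in \(H^1_{A_*}(\R^N, \C)\) every kernel element has the form \(D_{A_*} u_{A_*}[y] + \lambda i u_{A_*}\); imposing reality forces \(\lambda = 0\) and \(y \in \ker \Hat{A}_*\), leaving only the translations \(\partial_y u_{A_*}\), which are odd and therefore excluded by the even symmetry built into \(X\). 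The implicit function theorem then provides a \(C^1\) map \(V \mapsto u_V \in X\) defined near \(\abs{A_*}^2\), and the uniqueness of the real positive \(G\)--invariant even groundstate identifies \(u_V\) with \(u_A\) whenever \(\abs{A}^2 = V\).

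Differentiating \(F(V, u_V) = 0\) at \(V = \abs{A_*}^2\) in the direction \(Q \in \mathcal{Q}\) defines
\[
 w_{A_*}[Q] := \partial_V u_V \big|_{V = \abs{A_*}^2}[Q] \in X,
\]
which satisfies the scalar equation \(-\Delta w + (1 + \abs{A_*}^2) w - (p-1) u_{A_*}^{p - 2} w = - Q u_{A_*}\) and is linear and bounded in \(Q\). Because \(w_{A_*}[Q]\) is real and \(G\)--invariant, the argument in the proof of Lemma~\ref{prop:symmetriesCoupling} yields \(A_*(x)[\nabla w_{A_*}[Q](x)] = 0\) pointwise, which makes the scalar equation equivalent to the complex magnetic equation in \eqref{eq:wA*} and forces the \(H^1_{A_*}(\R^N,\C)\)-- and \(X\)--norms to coincide on \(X\); the \(C^1\) expansion in \(X\) therefore transfers into the claimed \(H^1_{A_*}(\R^N, \C)\)--expansion. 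Finally, the orthogonality conditions in \eqref{eq:wA*} follow from parity against the odd \(D_{A_*} u_{A_*}[h]\) and from reality against the imaginary \(iu_{A_*}\); uniqueness of \(w_{A_*}\) is then clear because any other candidate would differ from the above by a kernel element of the complex linearization, precisely the direction space killed by those orthogonality constraints.

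The main obstacle I anticipate is the clean justification that \(L_*\) is an isomorphism on the weighted symmetric space \(X\): one has to handle the unbounded quadratic weight \(\abs{A_*}^2\) and, in the presence of null directions of \(\Hat{A}_*\), recover coercivity of \(-\Delta + 1 + \abs{A_*}^2\) on \(X\) by exploiting the \(G\)--invariance (which, combined with evenness, kills the translation zero-modes attached to those null directions). Once the spectral analysis is in place, the rest of the proof reduces to standard implicit function theorem calculus and careful parity bookkeeping.
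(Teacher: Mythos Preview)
Your approach is sound and genuinely different from the paper's. The paper does \emph{not} invoke the implicit function theorem; instead it constructs \(w_{A_*}[Q]\) directly by inverting the linearization on the orthogonal complement of its kernel (their Claim~1), then sets \(v_A = u_A - u_{A_*} - w_{A_*}[\abs{A}^2-\abs{A_*}^2]\), writes down the equation it satisfies, and shows \(\norm{v_A}_{H^1_{A_*}} = o(\norm{\abs{A}^2-\abs{A_*}^2})\) by feeding that equation back into the linearized estimate. A key intermediate step is a uniform exponential bound \(\abs{u_A(x)} \le C e^{-(1-\eta)\abs{x}}\) for \(A\) near \(A_*\), which controls the weighted terms and the nonlinear remainder.

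Your route---passing to the fixed real weighted symmetric space \(X\) and applying the implicit function theorem there---is cleaner conceptually and sidesteps the varying-space issue the paper worries about elsewhere. Two points deserve more care than you give them. First, to identify the \(C^1\) branch \(u_V\) from the implicit function theorem with the actual groundstates \(u_A\), you need \(u_A \to u_{A_*}\) in the \emph{weighted} norm of \(X\), not just in \(H^1_{A_*}\); this is exactly where the uniform exponential decay (the paper's Claim~\ref{claimExponentialDecay}) enters, and you should flag it rather than leave it implicit. Second, the \(H^1_{A_*}\)-- and \(X\)--norms do not coincide on \(X\) (think of \(A_*=0\)); what you actually have, and all you need, is the continuous embedding \(X \hookrightarrow H^1_{A_*}\) for real \(G\)--invariant functions via decoupling, which carries the \(o(\cdot)\) from \(X\) to \(H^1_{A_*}\). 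With those two fixes your argument goes through; the paper's hands-on method trades elegance for not having to set up the weighted space and its Fredholm theory.
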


\begin{proof}
\resetclaim

\begin{claim} \label{claim:1estimate}
Let $\abs{d A_*} \le \varepsilon$ be small enough and $f \in L^2(\mathbb{R}^N, \mathbb{C})$. If 
\begin{equation*}
\left\{
\begin{aligned}
 \int_{\Rset^N} \scalprod{D_{A_*} u_{A_*}[h]}{f}&= 0
&& \text{for each \(h \in \Rset^N\),}\\
 \int_{\Rset^N} \scalprod{i u_{A_*}}{f}&= 0,
\end{aligned}
\right.
\end{equation*}
then the problem 
\begin{equation*}
\left\{
\begin{aligned}
 -\Delta_{A_*} v + v - (p- 1) \abs{u_{A_*}}^{p - 2} v &= f, && \text{in \(\Rset^N\)},\\
 \scalprod{D_{A_*} u_{A_*}[h]}{v}_{H^1_{A_*}(\Rset^N,\C)} &= 0
&& \text{for each \(h \in \Rset^N\),}\\
 \scalprod{i u_{A_*}}{v}_{H^1_{A_*} (\Rset^N,\C)} &= 0,
\end{aligned}
\right.
\end{equation*}
has a unique solution \(v \in H^1_{A_*}(\mathbb{R}^N, \mathbb{C})\).
Moreover, there exists \(C > 0\) such that \(v\) satisfies the following estimate
\[
  \norm{ v }_{H^1_{A_*}(\mathbb{R}^N, \mathbb{C})} \le C \norm{f}_{L^2(\mathbb{R}^N, \mathbb{C})}.
\]
\end{claim}

\begin{proofclaim}
Let $L_{A_*} \,: \, H^1_{A_*} (\R^N,\mathbb{C}) \to H^1_{A_*} (\R^N, \mathbb{C})$ be the operator defined by 
\[
L_{A_*} u \defeq (p - 1) (-\Delta_{A_*} + I)^{-1} \left( \abs{u_{A_*}}^{p - 2} u \right).
\]
The equation of $v$ then reads 
\[
 v - L_{A_*} (v) = (-\Delta_{A_*} + I)^{-1}f.
\]
Therefore, thanks to the orthogonality condition on $v$ and Proposition~\ref{propositionNonDegeneracyA}, we have the existence $v \in \left( \ker (I - L_{A_*}) \right)^\perp$ in $H^1_{A_*}(\R^N,\C)$. We can inverse the operator and it follows that 
\begin{equation*}
\begin{split}
\norm{v}_{H^1_{A_*} (\R^N, \mathbb{C})} & \le C \bignorm{(-\Delta_{A_*} + I)^{- 1} f}_{H^1_{A_*} (\R^N, \mathbb{C})}  \\
& \le C \sqrt{\bignorm{(-\Delta_{A_*} + I)^{- 1} f}_{L^2 (\R^N, \mathbb{C})} \norm{f}_{L^2 (\R^N, \mathbb{C})}} \le C \norm{f}_{L^2(\mathbb{R}^N, \mathbb{C})}.
\end{split}
\end{equation*}
Finally, if we assume by contradiction the existence of two solutions $v_1$, $v_2$, by subtracting the equations of $v_1$, $v_2$, we obtain that $v_1 - v_2$ has to solve the linear problem, for which we know by Proposition~\ref{propositionNonDegeneracyA} that the only solutions are given by $i u_{A_*}$ and $D_{A_*}u_{A_*}[h]$. We obtain a contradiction using the orthogonality conditions.
\end{proofclaim}

\begin{claim}
For each \(Q \in \mathcal{Q}\), the problem \eqref{eq:wA*} has a unique even and real-valued solution \(w_{A_*}[Q]\).
\end{claim}
\begin{proofclaim}
We remark that since \(u_{A_*}\) and \(Q\) are real-valued and even, we have immediately
\begin{equation*}
\left\{
\begin{aligned}
 \int_{\Rset^N} \scalprod{D_{A_*} u_{A_*}[h]}{-Qu_{A_*}}&= 0
&& \text{for each \(h \in \Rset^N\),}\\
 \int_{\Rset^N} \scalprod{i u_{A_*}}{-Qu_{A_*}}&= 0,
\end{aligned}
\right.
\end{equation*}
so that the problem has a unique complex-valued solution in view of Claim~\ref{claim:1estimate}.

By the symmetry properties of the functions \(u_{A_*}\), \(A_*\) and \(Q\), and by uniqueness, \(w_{A_*}[Q]\) is even and is invariant under the isometries that are compatible with \(A_*\). In view of Proposition~\ref{prop:symmetriesCoupling}, we have
\[
   -\Delta_{A_*} w_{A_*} = -\Delta w_{A_*} + \abs{A_*}^2 w_{A_*}.
\]
Since the functions \(u_{A_*}\) and \(Q\) are real valued, it follows then by uniqueness that \(w_{A_*}[Q]\) is real-valued for each \(Q \in \mathcal{Q}\).
\end{proofclaim}

We define now the function $v_A = u_A - u_{A_*} - w_{A_*}\bigl[\abs{A}^2 - \abs{A_*}^2\bigr]$. By using \eqref{eq:wA*} and by the equations satisfied by $u_A$,  $u_{A_*}$ and $w_{A_*}\bigl[\abs{A}^2 - \abs{A_*}^2\bigr]$, the function $v_A$ satisfies the equation
\begin{equation} \label{eq:vA}
\begin{split}
& -\Delta_{A_*} v_A + v_A - (p- 1) \abs{u_{A_*}}^{p - 2} v_A \\ 
& = \abs{u_A}^{p - 2} u_A  - \abs{u_{A_*}}^{p - 2} u_{A_*} + (p - 1) \abs{u_{A_*}}^{p - 2} (u_{A_*} - u_{A}) - \bigl(\abs{A}^2 - \abs{A_*}^2\bigr) (u_{A} - u_{A_*}).
\end{split}
\end{equation}
We remark that the mixed term vanishes: $i (A - A_*) \cdot \nabla u_A = 0$ because of the symmetries of $A$, $A_*$ and $u_A$ and the fact that $|A|^2$ and $|A_*|^2$ are in $\mathcal{Q}$.

\begin{claim} \label{claimExponentialDecay}
One has that
\[
u_A \to u_{A_*} \quad \text{ uniformly on } \R^N \text{ as } A \to A_* \, .
\]
Moreover, for every $\eta > 0$, there exists $\delta > 0$ small such that if $\norm{A - A_*} < \delta$, then for every $x \in \R^N$
\[
  \abs{u_A (x) } \le C \e^{-(1 - \eta) \abs{x}}.
\]
\end{claim}

\begin{proofclaim} 
By the symmetry properties of \(u_A\) and \(u_{A_*}\) and by Proposition~\ref{prop:symmetriesCoupling}, 
we can rewrite the equation of $u_A - u_{A_*}$ as
\begin{equation*}
\begin{split}
& - \Delta (u_A - u_{A_*}) + (u_A - u_{A_*}) \\
& = |u_A|^{p - 2} (u_A - u_{A_*}) + (|u_{A}|^{p - 2} - |u_{A_*}|^{p - 2} ) u_{A_*} + \abs{A}^2 (u_{A_*} - u_A) - (\abs{A}^2 - \abs{A_*}^2) u_{A_*}.
\end{split}
\end{equation*}
By classical regularity estimates, we first obtain that $u_{A} \to u_{A_*}$ uniformly on compact subsets of $\mathbb{R}^N$ as $A \to A_*$.

Next, since $u_A \to u_{A_*}$ in $L^p(\mathbb{R}^N)$ for $2 \leq p \leq 2^*$ (see Proposition~\ref{propositionContinuityGroundState} and Lemma~\ref{lemmaSobolev}), for every $\xi > 0$, there exist $R > 0$ and $\delta > 0$ such that if $\norm{A - A_*} < \delta$ then
\[
 \int_{\R^N \setminus B_{R }} \abs{u_A}^p \le \xi,                           
\]
from which we conclude that $\|u_A\|_{L^\infty(\mathbb{R}^N \setminus B_R)} \leq \eta$ (since we also know the exponential decay).

Next, for $\frac{1}{p} > \frac{1}{2} - \frac{1}{N}$, there exist $\xi > 0$ and $r > 0$, such that if $a \in \R^N$,
\[
 \int_{B_r (a)} \abs{u_A}^p \le \xi,
\]
by \cite{DiCosmoVanSchaftingen2015}*{proposition 4.6}, we conclude that $\|u_A\|_{L^\infty(B_{\frac{r}{2}}(a))} \leq \eta$. We can recover $B_R$ by a finite number of those small balls. 

Finally, by the classical Kato inequality
\begin{equation*}
 -\Delta \abs{u_{A}} + \abs{u_{A}} \le \abs{u_{A}}^{p - 1},
\end{equation*}
it follows that (by renaming $\eta$ if necessary)
\[
   \abs{u_A (x) } \le C \e^{- (1 - \eta)\abs{x}},
\]
for every $x \in \mathbb{R}^N$.
\end{proofclaim}

\begin{claim} \label{claim:3}
The following estimate holds
\[
 \norm{v_A}_{H^1_A (\R^N, \mathbb{C})} = o \bigl(\bignorm{\abs{A}^2 - \abs{A_*}^2}\bigr), \qquad \text{ as } A \to A_*. 
\]
\end{claim}

\begin{proofclaim}
We have to verify that $v_A$ satisfy the hypothesis of Claim~\ref{claim:1estimate}.

First, we remark that the equation of $v_A$ always makes sense since the right-hand-side of \eqref{eq:vA} is orthogonal in $L^2(\R^N)$ to $D_{A_*} u_{A_*}$ and $i u_{A_*}$. This is due to the evenness condition on $u_{A}$ and $u_{A_*}$, and their reality.

Next, we observe that the family satisfies 
\begin{equation*}
 \begin{split}
& \bigabs{\abs{u_A}^{p - 2} u_{A} -\abs{u_{A_*}}^{p - 2} u_{A_*} + (p - 1) \abs{u_{A_*}}^{p - 2} (u_{A_*} - u_{A})} \\
& \qquad \le C (\abs{u_A} + \abs{u_{A_*}})^{p - 2 - \gamma} \abs{u_A - u_{A_*}}^{\gamma} (\abs{v_A} + \abs{w_{A_*}[\abs{A}^2 - \abs{A_*}^2]}),
\end{split}
\end{equation*}
for $\gamma = \min (p - 2, 1)$ and $C > 0$. This object is in $L^2(\mathbb{R}^N)$ by using the uniform bounds in $\mathbb{R}^N$ of Claim~\ref{claimExponentialDecay} and the facts that $w_{A_*}[\abs{A}^2 - \abs{A_*}^2]$ and $v_A$ are in $H^1_{A_*}(\mathbb{R}^N, \mathbb{C})$.

Finally, we also have that 
\begin{equation}
\label{eq:vAb}
\scalprod{v_A}{D_{A_*} u_{A_*}}_{H^1_{A_*} (\Rset^N,\C)} = 0  \qquad \text{and} \qquad \scalprod{ v_A}{iu_{A_*}}_{H^1_{A_*} (\Rset^N,\C)} = 0, 
\end{equation}
still by using the evenness and reality conditions. This last condition is the orthogonality condition required in Claim~\ref{claim:1estimate}. Therefore, we can use the estimate of Claim~\ref{claim:1estimate}
\begin{multline*}
\norm{v_A}_{H^1_A (\R^N,\C)}\\
\le C \bigl(\norm{u_A}_{L^\infty} +  \norm{u_{A_*}}_{L^\infty}\bigr)^{p - 2 - \gamma}  \norm{u_A - u_{A_*}}_{L^\infty}^\gamma  \Bigl(\norm{v_{A_*}}_{L^2} + \bignorm{w_{A_*}[\abs{A}^2 - \abs{A_*}^2]}_{L^2} \Bigr) \\
  + C \| u_A - u_{A_*}\|_{L^\infty}^{1/2} \bignorm{\abs{A}^2 - \abs{A_*}^2},
\end{multline*}
where we used Claim~\ref{claimExponentialDecay} for the last term.

Since $\norm{w_{A_*}[\abs{A}^2 - \abs{A_*}^2]}_{L^2 (\R^N)}= O \bignorm{\abs{A}^2 - \abs{A_*}^2}$, $u_A \to u_{A_*}$ in $L^\infty(\R^N)$ by Claim~\ref{claimExponentialDecay} as $A \to A_*$, and $\gamma > 0$, we conclude that $\norm{v_A}_{H^1_A (\R^N,\C)} = o \left(\norm{\abs{A}^2 - \abs{A_*}^2} \right)$ as $A \to A_*$.
\end{proofclaim}

\begin{claim} \label{Claim:4}
Finally, we prove that
\[
 v_A (x) = o (\bignorm{\abs{A}^2 - \abs{A_*}^2}) \e^{-(1 - 2 \delta) \abs{x}}, \qquad \text{ as } A \to A_*, 
\]
uniformly in $x \in \R^N$.
\end{claim}

\begin{proofclaim}
By classical regularity estimates, we have $w_{A_*}[\abs{A}^2 - \abs{A_*}^2] \in C (\R^N)$. 
Starting again from \eqref{eq:vA} and using the fact that $v_A = o (\|\abs{A}^2- \abs{A_*}^2\|)$ in $L^2 (\R^N)$, we deduce that $v_A = o (\|\abs{A}^2- \abs{A_*}^2\|)$ uniformly on compact subsets of $\R^N$, see \cite{GilbargTrudinger1983}. 

Since the function $u_A$ has a uniform exponential bound (Claim~\ref{claimExponentialDecay}), we deduce from \eqref{eq:vA} and \eqref{eq:vAb} that if \(R > 0\) is large enough
\[
-\Delta \abs{v_A} + \frac{1}{2} \abs{v_A} \le o ( \| \abs{A}^2 - \abs{A_*}^2 \| ) \abs{x}^2 \e^{-(1 - \eta)\abs{x}}
\qquad \text{for each } x \in \R^N \setminus B_R,
\]
from which we conclude if \(2 \delta > \eta\) that
\[
v_A (x) = o ( \| \abs{A}^2 - \abs{A_*}^2 \| ) \e^{-(1 - 2 \delta) \abs{x}},
\]
uniformly for $x \in \R^N$.
\end{proofclaim}

We can now conclude the proof of the proposition. From Claim~\ref{Claim:4}, it follows that
\[
\norm{v_A}_{H^1_{A_*}(\R^N , \C)}  = o \bigl(\bignorm{ \abs{A}^2 - \abs{A_*}^2}) 
\]
and the conclusion follows.
\end{proof}

\subsection{Asymptotic expension of the ground-energy}

For a self-adjoint linear operator \(L \in \Lin (\Rset^N, \Rset^N)\), we let 
\(\lambda_1 (L), \dotsc, \lambda_N (L)\) denote the eigenvalues of \(L\), including
multiplicity. 
By the classical Courant--Fischer minimax principle, the functions \(\lambda_1, \dotsc, \lambda_N\)
can be taken to be continuous. Whereas simple eigenvalues are differentiable \cite{HornJohnson1985}*{\S 6.3},
eigenvalues are in general not differentiable. The simple following classical example shows that the first eigenvalue
\[
 \lambda_1 \Biggl( \begin{pmatrix} 0 & t \\ t & 0 \end{pmatrix} \Biggr) = -\abs{t}
\]
is not differentiable at $t = 0$, where it is in fact not simple.
However, it will be sufficient for our purpose to know that suitable \emph{averages of eigenvalues 
are differentiable}. Such results are well-known \citelist{\cite{Kato1976}*{II.2.2}\cite{Chu1990}*{\S 4}}.
We propose here for the convenience of the reader a direct proof 
following the strategy of the proof of differentiability of a function of a matrix \cite{HornJohnson1991}*{\S 6.6}.

\begin{lemma}[Differentiability of averaged eigenvalues]
\label{lemmaDiffAverEigenval}
Let \(L, M \in \Lin (\Rset^N, \Rset^N)\), \(\varepsilon > 0\) and \(j \in \{1, \dotsc, N\}\).
Assume that for every \(\ell \in \{1, \dotsc, N\}\) 
such that \(\lambda_{\ell} (L) \neq \lambda_j (L)\), 
one has \(\abs{\lambda_{\ell} (L) - \lambda_j (L)} > \varepsilon\).
Then,
\[
 \sum_{\ell \, : \, \abs{\lambda_\ell (M) - \lambda_j (L)} \le \varepsilon}
 \bigl(\lambda_\ell (M) - \lambda_j (L)\bigr)
 = \sum_{\ell \, : \, \abs{\lambda_\ell ({L}) - \lambda_j (L)} \le \varepsilon} e_\ell \cdot (M - L) (e_\ell)
 + o \bigl(\norm{M - L}\bigr),
\]
where \(e_1, \dotsc, e_N\) form an orthogonal basis of the space \(\Rset^N\) and are
eigenvectors of the operator \(L\) associated to the eigenvalues \(\lambda_1 (L), \dotsc, \lambda_N (L)\).
\end{lemma}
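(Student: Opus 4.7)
The plan is to realize the sum of eigenvalues in a specified range as the trace of a suitable operator via a Riesz spectral projection, then differentiate in $M$. Let $J \defeq \{\ell \st \lambda_\ell(L) = \lambda_j(L)\}$; by hypothesis, for every $\ell \notin J$ one has $\abs{\lambda_\ell(L) - \lambda_j(L)} > \varepsilon$. I fix $r$ with $\varepsilon < r < \min\{\abs{\lambda_\ell(L) - \lambda_j(L)} \st \ell \notin J\}$ (or any $r > \varepsilon$ if $J = \{1, \dotsc, N\}$), and let $\gamma$ be the positively oriented circle in $\C$ of radius $r$ centered at $\lambda_j(L)$. I then introduce the Riesz projections
\[
 P(L) \defeq \frac{1}{2\pi i}\oint_\gamma (z I - L)^{-1}\,dz, \qquad P(M) \defeq \frac{1}{2\pi i}\oint_\gamma (z I - M)^{-1}\,dz.
\]
Since $L$ is self-adjoint, $P(L)$ is the orthogonal projection onto $V \defeq \mathrm{span}\{e_\ell \st \ell \in J\}$, i.e.\ $P(L)(v) = \sum_{\ell \in J} (e_\ell \cdot v)\, e_\ell$. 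By continuity of the eigenvalues in the operator, when $\norm{M - L}$ is small enough, $\gamma$ avoids the spectrum of $M$ and encloses exactly those $\lambda_\ell(M)$ satisfying $\abs{\lambda_\ell(M) - \lambda_j(L)} \le \varepsilon$.

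Next I set $\phi(M) \defeq \trace\bigl((M - \lambda_j(L)\, I)\, P(M)\bigr)$. By the spectral theorem, $\phi(M)$ equals the left-hand side of the statement, and $\phi(L) = 0$ because $L P(L) = \lambda_j(L)\, P(L)$. Decomposing $M - \lambda_j(L) I = (M - L) + (L - \lambda_j(L) I)$ and $P(M) = P(L) + (P(M) - P(L))$, and using $(L - \lambda_j(L) I) P(L) = 0$, I obtain
\begin{align*}
 \phi(M) &= \trace\bigl((M - L)\, P(L)\bigr) + \trace\bigl((M - L)(P(M) - P(L))\bigr) \\
 &\qquad + \trace\bigl((L - \lambda_j(L) I)(P(M) - P(L))\bigr).
\end{align*}
The first term equals $\sum_{\ell \in J} e_\ell \cdot (M - L)(e_\ell)$, which is the desired main term. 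The resolvent identity $(zI - M)^{-1} - (zI - L)^{-1} = (zI - M)^{-1}(M - L)(zI - L)^{-1}$ yields $\norm{P(M) - P(L)} = O(\norm{M - L})$ uniformly on $\gamma$, so the second term is $O(\norm{M - L}^2)$.

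The hard part will be controlling the third term and showing it is $o(\norm{M - L})$ (in fact $O(\norm{M - L}^2)$). Iterating the resolvent identity yields the first-order expansion $P(M) - P(L) = P'(L)[M - L] + O(\norm{M - L}^2)$ with
\[
 P'(L)[H] = \frac{1}{2\pi i}\oint_\gamma (zI - L)^{-1}\, H \,(zI - L)^{-1}\,dz,
\]
so it suffices to establish $\trace\bigl((L - \lambda_j(L) I)\, P'(L)[H]\bigr) = 0$ for every $H$. To see this, I substitute $L - \lambda_j(L) I = -(zI - L) + (z - \lambda_j(L)) I$ inside the integrand defining $(L - \lambda_j(L) I)\, P'(L)[H]$: the first piece contributes $-\tfrac{1}{2\pi i}\oint_\gamma H (zI - L)^{-1}\,dz = -H\,P(L)$, whereas the second, after invoking the cyclic property of the trace, becomes $\trace(H\, Q)$ with $Q = \tfrac{1}{2\pi i}\oint_\gamma (z - \lambda_j(L))(zI - L)^{-2}\,dz$. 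A residue calculation via the spectral decomposition of $L$ gives $Q = P(L)$ (the factor $z - \lambda_j(L)$ reduces the double pole at $\lambda_j(L)$ to a simple pole of residue $1$, while the poles at $\lambda_\ell(L)$ for $\ell \notin J$ lie outside $\gamma$), so the two traces cancel. Assembling the three pieces then produces the claimed expansion.
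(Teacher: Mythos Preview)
Your proof is correct. The Riesz-projection computation is clean, and the cancellation in the third term via the residue identity $Q = P(L)$ is exactly what is needed.

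The paper proceeds differently: rather than a contour integral, it builds a \emph{polynomial} spectral projector. It sets
\[
 f(M) = \trace\Biggl( (M - \lambda_j(L) I)\, \prod_{\ell\,:\,\abs{\lambda_\ell(L) - \lambda_j(L)} > \varepsilon} \frac{(M - \lambda_\ell(L) I)^2}{(\lambda_j(L) - \lambda_\ell(L))^2} \Biggr),
\]
a polynomial in $M$ whose second factor equals the identity on the $\lambda_j(L)$--eigenspace of $L$ and vanishes to second order at every other eigenvalue of $L$. Evaluating the trace in an eigenbasis of $M$ recovers the left-hand side up to $o(\norm{M-L})$; expanding the polynomial to first order in $M-L$ and evaluating in the eigenbasis $e_1,\dotsc,e_N$ of $L$ yields the right-hand side. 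The second-order vanishing is precisely what kills the analogue of your ``third term'' when one differentiates the product rule, playing the same role as your identity $\trace\bigl((L - \lambda_j(L) I)\, P'(L)[H]\bigr) = 0$.

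Your route is the standard Kato--Rellich perturbation argument and makes the underlying analyticity transparent; the paper's route is more elementary in that it stays within real polynomial algebra and avoids complex contour integration, at the cost of a slightly heavier bookkeeping in the product-rule expansion. Both rely implicitly on $L$ and $M$ being self-adjoint (you for the spectral theorem applied to $M$, the paper for its ``orthogonal basis of eigenvectors of the self-adjoint linear map $M$''), which is consistent with how the lemma is applied in the paper.
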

\begin{proof}
We consider the real-valued function \(f\) defined on a self-adjoint linear operator \(M\) by 
\[
 f (M) = \trace \Biggl(\bigl(M - \lambda_j (L) I\bigr)\circ \prod_{\ell \, : \, \abs{\lambda_\ell (L) - \lambda_j (L)} > \varepsilon}
 \frac{\bigl(M - \lambda_\ell(L) I \bigr)^2}{\bigl(\lambda_j(L) - \lambda_\ell (L)\bigr)^2}\Biggr).
\]
By computation of the trace with respect to an orthogonal basis of eigenvectors of the self-adjoint linear map \(M\), we have 
\[
\begin{split}
 f (M) &= \sum_{m = 1}^N \Biggl((\lambda_m (M) - \lambda_j (L)) \prod_{\ell \, : \,  \abs{\lambda_\ell (L) - \lambda_j (L)} > \varepsilon}
 \frac{\bigl(\lambda_m (M) - \lambda_\ell(L)\bigr)^2}{\bigl(\lambda_j(L) - \lambda_\ell (L)\bigr)^2}\Biggr)\\
 & = \sum_{\ell \, : \, \abs{\lambda_\ell (M) - \lambda_j (L)} \le \varepsilon}
 \bigl(\lambda_\ell (M) - \lambda_j (L)\bigr) + o \bigl(\norm{M  - L}\bigr).
\end{split}
\]
We note in particular that \(f (L) = 0\).
By classical differential calculus applied to a polynomial of linear operators, we have, on the other hand
\begin{multline*}
 f (M) = f (L) + \trace \Biggl((M - L) \circ \prod_{\ell \, : \, \abs{\lambda_\ell (L) - \lambda_j (L)} > \varepsilon}
 \frac{(L - \lambda_\ell(L) I )^2}{(\lambda_j(L) - \lambda_\ell (L))^2}\Biggr)\\
 + \sum_{m \, : \, \abs{\lambda_m (L) - \lambda_j (L) } > \varepsilon} 
 \trace  \Biggl(\bigl(L - \lambda_j (L) I \bigr)\circ \biggl(\prod_{\ell \, : \, \substack{\abs{\lambda_\ell (L) - \lambda_j (L)} > \varepsilon\\ \ell < m}}
 \frac{(L - \lambda_\ell(L) I )^2}{(\lambda_j(L) - \lambda_\ell (L))^2}\biggr)\\
 \circ \frac{\bigl(L - \lambda_m (L) I \bigr)\circ (M - L) + (M - L) \circ \bigl(L - \lambda_m (L) I \bigr)}{(\lambda_j (L) - \lambda_m (L))^2}\\
 \circ \biggl(\prod_{\ell \, : \, \substack{\abs{\lambda_\ell (L) - \lambda_j (L)} > \varepsilon\\ \ell > m}}\frac{(L - \lambda_\ell(L) I )^2}{(\lambda_j(L) - \lambda_\ell (L))^2}
 \biggr)\Biggr)
 + o \bigl(\norm{M - L}\bigr).
\end{multline*}
By computing the trace in the basis of eigenvectors \(e_1, \dotsc, e_N\) of $L$, we obtain
\[
 f (M) = \sum_{\ell \, : \, \abs{\lambda_\ell ({L}) - \lambda_j (L)} \le \varepsilon} e_\ell \cdot (M - L) (e_\ell) + o \bigl( \norm{M - L}\bigr);
\]
the conclusion follows.
\end{proof}

Using the differentiability result of Proposition~\ref{propositionDifferentiability}, we can then prove that the ground-energy is differentiable in a neighbourhood of $A = 0$.

\begin{proof}[Proof of Theorem~\ref{thm:energy}]
First assume that the magnetic fields \(B\) and \(B_*\) have a common basis of eigenvectors. 
The same then holds for the corresponding vector potentials \(A\) and \(A_*\) which commute. 
We then have by Proposition~\ref{propositionDifferentiability},
\[
u_{A} = u_{A_*} + w_{A_*}[\abs{A}^2 - \abs{A_*}^2] + o\bigl(\bignorm{\abs{A}^2 - \abs{A_*}^2} \bigr) \qquad \text{ in } H^1_{A_*}(\R^N,\C).
\]
From this we deduce that 
\begin{equation}
\label{eqFirstExpansion}
\begin{split} 
 \mathcal{E} (B) & = \mathcal{E} (B_*) + 
\dualprod{\mathcal{I}_{A_*}' (u_{A_*})}{w_{A_*} [\abs{A}^2 - \abs{A_*}^2]}
\\
& + \frac{1}{2} \int_{\Rset^N}( \abs{A}^2 - \abs{A_*}^2) \abs{u_{A_*}}^2
+ o\bigl(\bignorm{\abs{A}^2 - \abs{A_*}^2} \bigr)\\
&= \mathcal{E} (B_*) + \frac{1}{2} \int_{\Rset^N}( \abs{A}^2 - \abs{A_*}^2) \abs{u_{A_*}}^2+ o\bigl(\bignorm{\abs{A}^2 - \abs{A_*}^2} \bigr),
\end{split}
\end{equation}
since $\mathcal{I}_{A_*}'(u_{A_*}) = 0$. If \(\lambda_1 (A^* \circ A), \dotsc, \lambda_N (A^* \circ A)\) denote the eigenvalues in nonincreasing order of the operator \(A^* \circ A\) and if \(e_1, \dotsc, e_N\) form a common basis of eigenvectors of \(A_*^* \circ A_*\) and \(A^* \circ A\), we have 
\begin{equation*}
 \int_{\Rset^N}( \abs{A}^2 - \abs{A_*}^2) \abs{u_{A_*}}^2 
 = \sum_{j=1}^N \int_{\Rset^N} \bigl(\lambda_j (A^* \circ A) - \lambda_j (A_*^* \circ A_*) \bigr) \abs{e_j \cdot x}^2 \abs{u_{A_*} (x)}^2 \dif x.
\end{equation*}
We now decompose \(\{1, \dotsc, N\} = \bigcup_{k = 1}^K J_k\), so that \(\lambda_j (A_*^* \circ A_*) = \lambda_\ell (A_*^* \circ A_*)\) if and only if there exists \(k \in \{1, \dotsc, N\}\) such that 
\(j, \ell \in J_k\).
We then have, in view of the symmetry properties of \(u_{A_*}\), see Theorem~\ref{theoremSymmetry}, that
\begin{multline*}
 \int_{\Rset^N}( \abs{A}^2 - \abs{A_*}^2) \abs{u_{A_*}}^2\\
 = \sum_{k = 1}^K \frac{1}{\# J_k} \sum_{j \in J_k} \sum_{\ell \in J_k} 
 \int_{\Rset^N} \bigl(\lambda_j (A^* \circ A) - \lambda_j (A_*^* \circ A_*) \bigr)
 |e_\ell \cdot x|^2 \abs{u_{A_*} (x)}^2 \dif x.
\end{multline*}
By using again the rotational invariance of Theorem~\ref{theoremSymmetry}, as well as Lemma~\ref{lemma:propertyGSE}, we have in general, that is even if \(A\) and \(A_*\) do not have a common basis of complex eigenvectors, that there exists an isometry \(R : \Rset^N \to \Rset^N\) such that, if we define \(\Tilde{A} \in \Lin (\Rset^N, \bigwedge^1 \Rset^N)\) by \(\Tilde{A}(x)[v] = A (R (x))[R (v)]\), \(\Tilde{A}\) and \(A_*\) have a common basis of complex eigenvectors, which implies that \(\Tilde{A}^* \circ \Tilde{A}\) and \(A_*^* \circ A_*\) have a common orthonormal basis of real eigenvectors. Moreover, these common eigenvectors of \(\Tilde{A}^* \circ \Tilde{A}\) and \(A_*^* \circ A_*\) appear in nonincreasing order of associated eigenvalues.
We then have, if we also order the eigenvalues of \(A^* \circ A\) in nondecreasing order, that \(\lambda_j (\Tilde{A}^* \circ \Tilde{A}) =  \lambda_j (A^* \circ A)\) and by the rotational invariance of the \(\mathcal{E}\) (Lemma~\ref{lemma:propertyGSE}), \(\mathcal{E} (B) = \mathcal{E} (dA) = \mathcal{E} (d\Tilde{A})\). Therefore, using what we did above
\begin{multline*}
\mathcal{E} (B) 
= 
  \mathcal{E} (B_*) + \frac{1}{2}\sum_{k = 1}^K \frac{1}{\# J_k} \sum_{j \in J_k} \sum_{\ell \in J_k} 
 \int_{\Rset^N} \bigl(\lambda_j (A^* \circ A) - \lambda_j (A_*^* \circ A_*) \bigr)
 |e_\ell \cdot x|^2 \abs{u_{A_*} (x)}^2 \dif x \\
 + o \bigl(\bignorm{\abs{A_*}^2 - \abs{A}^2} \bigr).
\end{multline*}
In view of the differentiability of averaged eigenvalues of Lemma~\ref{lemmaDiffAverEigenval}, we deduce that
\begin{multline*}
\mathcal{E} (B) 
=
  \mathcal{E} (B_*) + \frac{1}{2}\sum_{k = 1}^K \frac{1}{\# J_k} \sum_{j \in J_k} \sum_{\ell \in J_k} 
 \int_{\Rset^N} (\abs{A (e_j)}^2 - \abs{A_* (e_j)}^2)
 |e_\ell \cdot x|^2 \abs{u_{A_*} (x)}^2 \dif x\\ + o \bigl(\bignorm{\abs{A_*}^2 - \abs{A}^2} \bigr).
\end{multline*}
By the rotation invariance in the eigenspaces, this becomes 
\[
\mathcal{E} (B) 
=
  \mathcal{E} (B_*) + \frac{1}{2} \sum_{j= 1}^N
 \int_{\Rset^N} (\abs{A (e_j)}^2 - \abs{A_* (e_j)}^2)
 |e_j \cdot x|^2 \abs{u_{A_*} (x)}^2 \dif x + o \bigl(\bignorm{\abs{A_*}^2 - \abs{A}^2} \bigr).
\]
Since \(u_{A_*}\) is invariant under reflections in the directions \(e_1, \dotsc, e_N\), we have
\[
 \int_{\Rset^N}  (e_j \cdot x) (e_\ell \cdot x) \abs{u_{A_*} (x)}^2 \dif x = 0,
\]
so that
\[
\int_{\Rset^N} \bigl(\abs{A}^2 - \abs{A_*}^2\bigr) \abs{u_{A_*}}  = \sum_{j=1}^N \int_{\R^N} \bigl( \abs{A(e_j)}^2 - \abs{A_*(e_j)}^2 \bigr) |x \cdot e_j|^2 \abs{u_{A_*}(x)}^2 \dif x.
\]
Therefore,
\begin{equation}
\label{eqDerivativeEnergy}
\begin{split}
 \mathcal{E} (B) 
 &=\mathcal{E} (B_*) + \frac{1}{2} \int_{\Rset^N} \bigl(\abs{A}^2 - \abs{A_*}^2\bigr) \abs{u_{A_*}}^2  + o \bigl(\bignorm{\abs{A}^2 - \abs{A_*}^2} \bigr) \\
 &= \mathcal{E} (B_*) + \int_{\Rset^N}  A_* \cdot (A - A_*) \abs{u_{A_*}}^2 + o \bigl(\norm{A - A_*}\bigr)\\
 &= \mathcal{E} (B_*) + \frac{1}{4} \int_{\Rset^N} B_* (x) \cdot (B (x) - B_* (x)) \abs{u_{A_*} (x)}^2 \dif x + o \bigl(|B - B_*|\bigr),
\end{split}
\end{equation}
where we set $B(x)[v] = B[x,v]$. Therefore, the differentiability of the function \(\mathcal{E}\) follows in a neighbourhood of $0$.

For the convexity, given \(B, B_* \in \bigwedge^2 \Rset^N\), we use the formula of the derivative of the energy
to write
\[
\begin{split}
 \dualprod{\mathcal{E}'(B) - \mathcal{E}' (B_*)}{B - B_*}
 &= \frac{1}{4} \int_{\Rset^N} B (x) \cdot (B (x) - B_* (x)) \abs{u_{A} (x)}^2 \dif x \\
 &\qquad - \frac{1}{4} \int_{\Rset^N} B_* (x) \cdot (B (x) - B_* (x)) \abs{u_{A_*} (x)}^2 \dif x\\
 &\ge \frac{1}{4}  \int_{\Rset^N} \abs{B (x) - B_* (x)}^2 \abs{u_{A_*} (x)}^2 \dif x\\
 &\qquad- C | B - B_* | \int_{\Rset^N} \abs{x}^2 \bigabs{\abs{u_{A_*} (x)}^2 - \abs{u_A (x)}^2}\dif x\\
 &\ge \frac{1}{4}  \int_{\Rset^N} \abs{B (x) - B_* (x)}^2 \abs{u_{A_*} (x)}^2 \dif x - o (\abs{B - B_*}^2),
\end{split}
\]
in view of Proposition~\ref{propositionDifferentiability}, when \(B \to B_*\), uniformly in \(B_*\)
in a sufficiently small neighbourhood of \(0\). It follows then that if \(B\) and \(B_*\) are close enough
to \(0\) 
\[
 \dualprod{\mathcal{E}'(B) - \mathcal{E}' (B_*)}{B - B_*} \ge 0,
\]
from which the convexity follows.

By the diamagnetic inequality \eqref{eqDiamagnetic}, the function \(\mathcal{E}\) achieves its global minimum at \(0\).

Finally, by \eqref{eqFirstExpansion} with \(B_* = 0\), we have 
\[
\begin{split}
\mathcal{E}(B) &= \mathcal{E}(0) + \frac{1}{2} \int_{\mathbb{R}^N} \abs{A}^2 \abs{u_0}^2 + o(\norm{A}^2)\\
& = \mathcal{E}(0) + \frac{1}{8} \int_{\mathbb{R}^N} \abs{B (x)}^2 \abs{u_0 (x)}^2 \dif x + o (\abs{B}^2).
\end{split}
\]
Since the function \(u_0\) is radial, we have
\[
\begin{split}
  \int_{\mathbb{R}^N} \abs{B (x)}^2 \abs{u_0 (x)}^2 \dif x
  &= \sum_{j, \ell, m = 1}^N \int_{\mathbb{R}^N}(e_j \cdot x) (e_\ell \cdot x) B (e_j, e_m) B (e_\ell, e_m) \abs{u_0 (x)}^2 \dif x\\
  &= \sum_{j, m = 1}^N (B (e_j, e_m))^2 \int_{\Rset^N} (e_j \cdot x)^2 \abs{u_0 (x)}^2 \dif x\\
  &= \sum_{j, m = 1}^N (B (e_j, e_m))^2 \frac{1}{N} \int_{\Rset^N} \abs{x}^2 \abs{u_0 (x)}^2 \dif x.
\end{split}
\]
The conclusion follows from the fact that the Euclidean norm on \(\bigwedge^2 \Rset^N\) 
which is compatible with the exterior product is given by 
\[
 \abs{B}^2 = \frac{1}{2} \sum_{j, m = 1}^N (B (e_j, e_m))^2.\qedhere
\]
\end{proof}

\begin{bibdiv}
\begin{biblist}

\bib{Agmon1984}{article}{
   AUTHOR = {Agmon, Shmuel},
   TITLE = {Bounds on exponential decay of eigenfunctions of Schr\"odinger operators},
   CONFERENCE = {
                  title = {Schr\"odinger operators},
                  address = {Como},
                  date = {1984},
                 },
   BOOK = {
           series = {Lecture Notes in Math.},
           volume = {1159},
           publisher = {Springer}, 
           address = {Berlin},
           },
   DATE = {1985},
   PAGES = {1--38},
}

\bib{AmbrosioDalMaso1990}{article}{
   AUTHOR = {Ambrosio, Luigi},
   AUTHOR = {Dal Maso, Gianni},
   TITLE = {A general chain rule for distributional derivatives},
   JOURNAL = {Proc. Amer. Math. Soc.},
   VOLUME = {108},
   YEAR = {1990},
   NUMBER = {3},
   PAGES = {691--702},
   ISSN = {0002-9939},
}

\bib{AmbrosettiMalchiodi}{book}{
    AUTHOR = {Ambrosetti, Antonio},
    AUTHOR = {Malchiodi, Andrea},
    TITLE = {Perturbation methods and semilinear elliptic problems on {${\bf R}^n$}},
    SERIES = {Progress in Mathematics},
    VOLUME = {240},
    PUBLISHER = {Birkh\"auser}, 
    address={Basel},
    YEAR = {2006},
    PAGES = {xii+183},
    ISBN = {978-3-7643-7321-4; 3-7643-7321-0},
}

\bib{AmbrosettiMalchiodiRuiz}{article}{
   AUTHOR = {Ambrosetti, Antonio},
   AUTHOR = {Malchiodi, Andrea},
   AUTHOR = {Ruiz, David},
   TITLE = {Bound states of nonlinear Schr\"odinger equations with potentials vanishing at infinity},
   JOURNAL = {J. Anal. Math.},
   VOLUME = {98},
   YEAR = {2006},
   PAGES = {317--348},
   ISSN = {0021-7670},
}

\bib{BartschWethWillem}{article}{
   AUTHOR = {Bartsch, Thomas},
   AUTHOR = {Weth, Tobias},
   AUTHOR = {Willem, Michel},
   TITLE = {Partial symmetry of least energy nodal solutions to some variational problems},
   JOURNAL = {J. Anal. Math.},
   VOLUME = {96},
   YEAR = {2005},
   PAGES = {1--18},
   ISSN = {0021-7670},
}
		
\bib{BonheureBouchezGrumiau2009}{article}{
   AUTHOR = {Bonheure, Denis},
   AUTHOR = {Bouchez, Vincent},
   AUTHOR = {Grumiau, Christopher},
   TITLE = {Asymptotics and symmetries of ground-state and least energy nodal solutions for boundary-value problems with slowly growing superlinearities},
   JOURNAL = {Differential Integral Equations},
   VOLUME = {22},
   YEAR = {2009},
   NUMBER = {9-10},
   PAGES = {1047--1074},
   ISSN = {0893-4983},
}

\bib{BonheureBouchezGrumiauVanSchaftingen2008}{article}{
   AUTHOR = {Bonheure, Denis},
   AUTHOR = {Bouchez, Vincent},
   AUTHOR = {Grumiau, Christopher},
   AUTHOR = {Van Schaftingen, Jean},
   TITLE = {Asymptotics and symmetries of least energy nodal solutions of Lane-Emden problems with slow growth},
   JOURNAL = {Commun. Contemp. Math.},
   VOLUME = {10},
   YEAR = {2008},
   NUMBER = {4},
   PAGES = {609--631},
   ISSN = {0219-1997},
}

\bib{BrockSolynin2000}{article}{
   AUTHOR = {Brock, Friedemann},
   AUTHOR = {Solynin, Alexander Yu.},
   TITLE = {An approach to symmetrization via polarization},
   JOURNAL = {Trans. Amer. Math. Soc.},
   VOLUME ={352},
   YEAR = {2000},
   NUMBER = {4},
   PAGES = {1759--1796},
   ISSN = {0002-9947},
}

\bib{Chu1990}{article}{
   AUTHOR = {Chu, King-Wah Eric},
   TITLE = {On multiple eigenvalues of matrices depending on several parameters},
   JOURNAL = {SIAM J. Numer. Anal.},
   VOLUME = {27},
   YEAR = {1990},
   NUMBER = {5},
   PAGES = {1368--1385},
   ISSN = {0036-1429},
}

\bib{Coffman1972}{article}{
   AUTHOR = {Coffman, Charles V.},
   TITLE = {Uniqueness of the ground state solution for $\Delta u-u+u^{3}=0$\ and a variational characterization of other solutions},
   JOURNAL = {Arch. Rational Mech. Anal.},
   VOLUME = {46},
   YEAR = {1972},
   PAGES = {81--95},
   ISSN = {0003-9527},
}

\bib{DiCosmoVanSchaftingen2015}{article}{
   AUTHOR = {Di Cosmo, Jonathan},
   AUTHOR = {Van Schaftingen, Jean},
   TITLE = {Semiclassical stationary states for nonlinear Schr\"odinger equations under a strong external magnetic field},
   JOURNAL = {J. Differential Equations},
   VOLUME = {259},
   YEAR = {2015},
   NUMBER = {2},
   PAGES = {596--627},
   ISSN = {0022-0396},
}

\bib{Erdos1996}{article}{
   AUTHOR = {Erd{\H{o}}s, László},
   TITLE = {Gaussian decay of the magnetic eigenfunctions},
   JOURNAL = {Geom. Funct. Anal.},
   VOLUME = {6},
   DATE = {1996},
   NUMBER = {2},
   PAGES = {231--248},
   ISSN = {1016-443X},
}

\bib{EstebanLions1999}{incollection}{
   AUTHOR = {Esteban, Maria J.},
   AUTHOR = {Lions, Pierre-Louis},
   TITLE = {Stationary solutions of nonlinear {S}chr\"odinger equations with an external magnetic field},
   BOOKTITLE = {Partial differential equations and the calculus of variations,
              {V}ol.\ {I}},
   SERIES = {Progr. Nonlinear Differential Equations Appl.},
   VOLUME = {1},
   PAGES = {401--449},
   PUBLISHER = {Birkh\"auser Boston},
   ADDRESS = {Boston, MA},
   YEAR = {1989},
}

\bib{FournaisLeTreustRaymondVanSchaftingen}{article}{
   AUTHOR = {Fournais, Soeren},
   AUTHOR = {Le Treust, Lo\"\i c},
   AUTHOR = {Raymond, Nicolas},
   AUTHOR = {Van Schaftingen, Jean},
   TITLE = {Semiclassical Sobolev constants for the electro-magnetic Robin Laplacian},
   EPRINT = {arXiv:1603.02810},
}

\bib{FournaisRaymond}{article}{
   AUTHOR = {Fournais, Soeren},
   AUTHOR = {Raymond, Nicolas},
   TITLE = {Optimal magnetic Sobolev constants in the semiclassical limit},
   JOURNAL = {Ann. Inst. H. Poincar\'e Anal. Non Lin\'eaire},
  doi={10.1016/j.anihpc.2015.03.008},
}

\bib{GilbargTrudinger1983}{book}{
   AUTHOR = {Gilbarg, David},
   AUTHOR = {Trudinger, Neil S.},
   TITLE = {Elliptic partial differential equations of second order},
   SERIES = {Grundlehren der Mathematischen Wissenschaften},
   VOLUME = {224},
   EDITION = {2},
   PUBLISHER = {Springer}, 
   ADDRESS = {Berlin},
   YEAR = {1983},
   PAGES = {xiii+513},
   ISBN = {3-540-13025-X},
}

\bib{HornJohnson1985}{book}{
   AUTHOR = {Horn, Roger A.},
   AUTHOR = {Johnson, Charles R.},
   TITLE = {Matrix analysis},
   PUBLISHER = {Cambridge University Press}, 
   ADDRESS = {Cambridge},
   YEAR = {1985},
   PAGES = {xiii+561},
   ISBN = {0-521-30586-1},
}

\bib{HornJohnson1991}{book}{
   AUTHOR = {Horn, Roger A.},
   AUTHOR = {Johnson, Charles R.},
   TITLE = {Topics in matrix analysis},
   PUBLISHER = {Cambridge University Press}, 
   ADDRESS = {Cambridge},
   YEAR = {1991},
   PAGES = {viii+607},
   ISBN = {0-521-30587-X},
}

\bib{Kato1976}{book}{
   AUTHOR = {Kato, Tosio},
   TITLE = {Perturbation theory for linear operators},
   EDITION = {2},
   series = {Grundlehren der Mathematischen Wissenschaften}, 
   VOLUME = {132},
   PUBLISHER = {Springer}, 
   ADDRESS = {Berlin--New York},
   DATE = {1976},
   PAGES = {xxi+619},
}

\bib{Kwong1989}{article}{
   AUTHOR = {Kwong, Man Kam},
   TITLE = {Uniqueness of positive solutions of $\Delta u-u+u^p=0$ in ${\bf R}^n$},
   JOURNAL = {Arch. Rational Mech. Anal.},
   VOLUME = {105},
   YEAR = {1989},
   NUMBER = {3},
   PAGES = {243--266},
   ISSN = {0003-9527},
}

\bib{Lax2002}{book}{
   AUTHOR = {Lax, Peter D.},
   TITLE = {Functional analysis},
   SERIES = {Pure and Applied Mathematics},
   PUBLISHER = {Wiley-Interscience},
   PLACE = {New York},
   YEAR = {2002},
   PAGES = {xx+580},
   ISBN = {0-471-55604-1},
}

\bib{LiebLoss}{book}{
   AUTHOR = {Lieb, Elliott H.},
   AUTHOR = {Loss, Michael},
   TITLE = {Analysis},
   SERIES = {Graduate Studies in Mathematics},
   VOLUME = {14},
   EDITION = {2},
   PUBLISHER = {American Mathematical Society},
   ADDRESS = {Providence, R.I.},
   YEAR = {2001},
   PAGES = {xxii+346},
   ISBN = {0-8218-2783-9},
}

\bib{Lions1984CC2}{article}{
   AUTHOR = {Lions, Pierre-Louis},
   TITLE = {The concentration-compactness principle in the calculus of variations. The locally compact case. II},
   JOURNAL = {Ann. Inst. H. Poincar\'e Anal. Non Lin\'eaire},
   VOLUME = {1},
   YEAR = {1984},
   NUMBER = {4},
   PAGES = {223--283},
   ISSN = {0294-1449},
}

\bib{McLeodSerrin1987}{article}{
   AUTHOR = {McLeod, Kevin},
   AUTHOR = {Serrin, James},
   TITLE = {Uniqueness of positive radial solutions of $\Delta u+f(u)=0$ in ${\bf R}^n$},
   JOURNAL = {Arch. Rational Mech. Anal.},
   VOLUME = {99},
   YEAR = {1987},
   NUMBER = {2},
   PAGES = {115--145},
   ISSN = {0003-9527},
}

\bib{MorozVanSchaftingen2013}{article}{
    AUTHOR = {Moroz, Vitaly},
    AUTHOR = {Van Schaftingen, Jean},
    TITLE= {Nonexistence and optimal decay of supersolutions to {C}hoquard equations in exterior domains},
    JOURNAL = {J. Differential Equations},
    VOLUME = {254},
    YEAR = {2013},
    NUMBER = {8},
    PAGES = {3089--3145},
    ISSN = {0022-0396},
}

\bib{MorozVanSchaftingen2013-2}{article}{
    AUTHOR = {Moroz, Vitaly},
    AUTHOR ={Van Schaftingen, Jean},
    TITLE = {Groundstates of nonlinear {C}hoquard equations: existence, qualitative properties and decay asymptotics},
    JOURNAL = {J. Funct. Anal.},
    VOLUME= {265},
    YEAR = {2013},
    NUMBER = {2},
    PAGES = {153--184},
    ISSN = {0022-1236},
    URL = {http://dx.doi.org/10.1016/j.jfa.2013.04.007},
}

\bib{Oh1990}{article}{
   AUTHOR = {Oh, Yong-Geun},
   TITLE = {On positive multi-lump bound states of nonlinear Schr\"odinger equations under multiple well potential},
   JOURNAL = {Comm. Math. Phys.},
   VOLUME = {131},
   YEAR = {1990},
   NUMBER = {2},
   PAGES = {223--253},
   ISSN = {0010-3616},
}

\bib{ShiojiWatanabe2016}{article}{
   author={Shioji, Naoki},
   author={Watanabe, Kohtaro},
   title={Uniqueness and nondegeneracy of positive radial solutions of ${\rm
   div}(\rho\nabla u)+\rho(-gu+hu^p)=0$},
   journal={Calc. Var. Partial Differential Equations},
   volume={55},
   date={2016},
   number={2},
   pages={art. 32},
   issn={0944-2669},
}

\bib{Shirai2008}{article}{
   AUTHOR = {Shirai, Shin-ichi},
   TITLE = {Existence and decay of solutions to a semilinear Schr\"odinger equation with magnetic field},
   JOURNAL = {Hokkaido Math. J.},
   VOLUME = {37},
   YEAR = {2008},
   NUMBER = {2},
   PAGES = {241--273},
   ISSN = {0385-4035},
}

\bib{VanSchaftingen2009}{article}{
   AUTHOR = {Van Schaftingen, Jean},
   TITLE = {Explicit approximation of the symmetric rearrangement by polarizations},
   JOURNAL = {Arch. Math. (Basel)},
   VOLUME = {93},
   YEAR = {2009},
   NUMBER = {2},
   PAGES = {181--190},
   ISSN = {0003-889X},
}

\bib{VanSchaftingen2014}{article}{
   AUTHOR = {Van Schaftingen, Jean},
   TITLE = {Interpolation inequalities between Sobolev and Morrey-Campanato spaces: a common gateway to concentration-compactness and Gagliardo-Nirenberg interpolation inequalities},
   JOURNAL = {Port. Math.},
   VOLUME = {71},
   YEAR = {2014},
   NUMBER = {3--4},
   PAGES = {159--175},
   ISSN = {0032-5155},
}

\bib{VanSchaftingenWillem2008}{article}{
    AUTHOR = {Van Schaftingen, Jean},
    AUTHOR = {Willem, Michel},
    TITLE = {Symmetry of solutions of semilinear elliptic problems},
    JOURNAL = {J. Eur. Math. Soc. (JEMS)},
    VOLUME = {10},
    YEAR = {2008},
    NUMBER = {2},
    PAGES = {439--456},
    ISSN = {1435-9855},
}

\bib{Weinstein1985}{article}{
   AUTHOR = {Weinstein, Michael I.},
   TITLE = {Modulational stability of ground states of nonlinear Schr\"odinger equations},
   JOURNAL = {SIAM J. Math. Anal.},
   VOLUME = {16},
   YEAR = {1985},
   NUMBER = {3},
   PAGES = {472--491},
   ISSN = {0036-1410},
}

\bib{Willem1996}{book}{
    AUTHOR = {Willem, Michel},
    TITLE = {Minimax theorems},
    SERIES = {Progress in Nonlinear Differential Equations and their Applications, 24},
    PUBLISHER = {Birkh\"auser},
    ADDRESS = {Boston, Mass.},
    YEAR = {1996},
    PAGES = {x+162},
    ISBN = {0-8176-3913-6}
}

\end{biblist}
\end{bibdiv}

\end{document}